\numberwithin{equation}{section}
\numberwithin{figure}{section}
\theoremstyle{plain}
\newtheorem{thm}{\protect\theoremname}
\theoremstyle{plain}
\newtheorem{conjecture}[thm]{\protect\conjecturename}
\theoremstyle{definition}
\newtheorem{defn}[thm]{\protect\definitionname}
\theoremstyle{plain}
\newtheorem{cor}[thm]{\protect\corollaryname}
\theoremstyle{remark}
\newtheorem*{rem*}{\protect\remarkname}
\theoremstyle{plain}
\newtheorem{lem}[thm]{\protect\lemmaname}
\theoremstyle{remark}
\newtheorem{claim}[thm]{\protect\claimname}
\theoremstyle{remark}
\newtheorem*{acknowledgement*}{\protect\acknowledgementname}
\providecommand{\acknowledgementname}{Acknowledgement}
\providecommand{\claimname}{Claim}
\providecommand{\conjecturename}{Conjecture}
\providecommand{\corollaryname}{Corollary}
\providecommand{\definitionname}{Definition}
\providecommand{\lemmaname}{Lemma}
\providecommand{\remarkname}{Remark}
\providecommand{\theoremname}{Theorem}
\begin{document}
\global\long\def\defeq{\stackrel{\mathrm{{\scriptscriptstyle def}}}{=}}%
\global\long\def\norm#1{\left\Vert #1\right\Vert }%
\global\long\def\R{\mathbb{R}}%
 
\global\long\def\Rn{\mathbb{R}^{n}}%
\global\long\def\tr{\mathrm{Tr}}%
\global\long\def\diag{\mathrm{diag}}%
\global\long\def\cov{\mathrm{Cov}}%
\global\long\def\E{\mathbb{E}}%
\global\long\def\P{\mathbb{P}}%
\global\long\def\Var{\mathrm{Var}}%
\global\long\def\rank{\mathrm{rank}}%
\global\long\def\lref#1{\text{Lem }\ltexref{#1}}%
\global\long\def\lreff#1#2{\text{Lem }\ltexref{#1}.\ltexref{#1#2}}%
\global\long\def\ltexref#1{\ref{lem:#1}}
\global\long\def\ttag#1{\tag{#1}}%
\global\long\def\cirt#1{\raisebox{.5pt}{\textcircled{\raisebox{-.9pt}{#1}}}}%
\global\long\def\Ent{\mathrm{Ent}}%
\global\long\def\vol{\mathrm{vol}}%
\global\long\def\spe{\mathrm{op}}%
\global\long\def\op{\mathrm{op}}%

\setcounter{page}{0}
\title{Eldan's Stochastic Localization and the KLS Conjecture: Isoperimetry,
Concentration and Mixing\thanks{This update unifies the results from \cite{LeeV17KLS} and \cite{lee2018stochastic}.}}
\author{Yin Tat Lee\thanks{University of Washington and Microsoft Research, yintat@uw.edu},
Santosh S. Vempala\thanks{Georgia Tech, vempala@gatech.edu}}
\maketitle
\begin{abstract}
We show that the Cheeger constant for $n$-dimensional isotropic logconcave
measures is $O(n^{1/4})$, improving on the previous best bound of
$O(n^{1/3}\sqrt{\log n})$$.$ As corollaries we obtain the same improved
bound on the thin-shell estimate, Poincar� constant and Lipschitz
concentration constant and an alternative proof of this bound for
the isotropic (slicing) constant; it also follows that the ball walk
for sampling from an isotropic logconcave density in $\R^{n}$ converges
in $O^{*}(n^{2.5})$ steps from a warm start. The proof is based on
gradually transforming any logconcave density to one that has a significant
Gaussian factor via a Martingale process.

Extending this proof technique, we prove that the log-Sobolev constant
of any isotropic logconcave density in $\R^{n}$ with support of diameter
$D$ is $\Omega(1/D)$, resolving a question posed by Frieze and Kannan
in 1997. This is asymptotically the best possible estimate and improves
on the previous bound of $\Omega(1/D^{2})$ by Kannan-Lov�sz-Montenegro.
It follows that for any isotropic logconcave density, the ball walk
with step size $\delta=\Theta(1/\sqrt{n})$ mixes in $O\left(n^{2}D\right)$
proper steps from \emph{any }starting point. This improves on the
previous best bound of $O(n^{2}D^{2})$ and is also asymptotically
tight. 

The new bound leads to the following large deviation inequality for
an $L$-Lipschitz function $g$ over an isotropic logconcave density
$p$: for any $t>0$, 
\[
\P_{x\sim p}\left(\left|g(x)-\bar{g}\right|\geq L\cdot t\right)\leq\exp(-\frac{c\cdot t^{2}}{t+\sqrt{n}})
\]
where $\bar{g}$ is the median or mean of $g$ for $x\sim p$; this
generalizes and improves on previous bounds by Paouris and by Guedon-Milman.
The technique also bounds the ``small ball'' probability in terms
of the Cheeger constant, and recovers the current best bound. 
\end{abstract}

\section{Introduction}

In this paper we study the Cheeger constant and the log-Sobolev constant
of logconcave measures in $n$-dimensional Euclidean space. These
fundamental parameters, which we will define presently, have many
important connections and applications (cf. \cite{Ledoux1999,brazitikos2014geometry}). 

The isoperimetry of a subset is the ratio of the measure of the boundary
of the subset to the measure of the subset or its complement, whichever
is smaller. The minimum such ratio over all subsets is the Cheeger
constant, also called expansion or isoperimetric coefficient. This
fundamental constant appears in many settings, e.g., graphs and convex
bodies, and plays an essential role in many lines of study.

\subsection{Cheeger constant}

In the geometric setting, the KLS hyperplane conjecture \cite{KLS95}
asserts that for any distribution with a logconcave density, the minimum
expansion is approximated by that of a halfspace, up to a universal
constant factor. Thus, if the conjecture is true, the Cheeger constant
can be essentially determined simply by examining hyperplane cuts.
More precisely, here is the statement. We use $c,C$ for absolute
constants, $a\lesssim b$ to denote $a\leq c\cdot b$ for some absolute
constant $c$, and $\norm A_{\op}$ for the spectral/operator norm
of a matrix $A$.
\begin{conjecture}[\cite{KLS95}]
 For any logconcave density p in $\R^{n}$ with covariance matrix
$A$, 
\[
\frac{1}{\psi_{p}}\defeq\inf_{S\subseteq\R^{n}}\frac{\int_{\partial S}p(x)dx}{\min\left\{ \int_{S}p(s)dx,\int_{\R^{n}\setminus S}p(x)dx\right\} }\gtrsim\frac{1}{\sqrt{\|A\|_{\op}}}.
\]
\end{conjecture}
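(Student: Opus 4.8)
The plan is to prove the conjecture via Eldan's stochastic localization, the technique named in the title. First, a standard reduction to the isotropic case: if $p$ has covariance $A$, its pushforward $q$ under $A^{-1/2}$ is isotropic, and since the linear map $A^{-1/2}$ multiplies the Minkowski boundary measure of any set by at most $\sqrt{\norm A_2}$ while leaving $p(S)$ and $p(S^{c})$ unchanged, $\psi_{p}^{-1}\ge\norm A_2^{-1/2}\,\psi_{q}^{-1}$. So it suffices to show $\psi_{q}^{-1}\ge c$ for every isotropic logconcave $q$ in $\Rn$ with $c$ absolute; equivalently, writing $\psi_{n}\defeq\sup\{\psi_{q}:q\text{ isotropic logconcave in dimension}\le n\}$, the target is $\sup_{n}\psi_{n}<\infty$. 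Assume henceforth $p$ isotropic.

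Run the localization $p_{t}(x)\propto p(x)\exp(\langle\theta_{t},x\rangle-\tfrac{t}{2}\norm x^{2})$ with $d\theta_{t}=dW_{t}+\mu_{t}\,dt$, $\mu_{t}=\E_{x\sim p_{t}}[x]$: the $p_{t}$ stay logconcave, the barycenter is a martingale with $d\mu_{t}=A_{t}\,dW_{t}$ where $A_{t}=\cov(p_{t})$, and $A_{t}$ solves an Itô equation with drift $-A_{t}^{2}$ and martingale part the third-moment tensor of $p_{t}$; in particular $\E A_{t}\preceq A_{0}=I$. For a fixed measurable $S$, $g_{t}=p_{t}(S)$ is a martingale with $d\langle g\rangle_{t}=|v_{t}|^{2}\,dt$, $v_{t}=\int_{S}(x-\mu_{t})\,p_{t}\,dx$, so $\Phi_{t}\defeq\E[g_{t}(1-g_{t})]$ obeys $\Phi_{t}'=-\E|v_{t}|^{2}\le0$. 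Two competing estimates pin down $\psi_{p}$ for this $S$: on one hand $|v_{t}|$ is bounded below, through the isoperimetry of the logconcave $p_{t}$, in terms of $\min(g_{t},1-g_{t})$, the Cheeger constant $\psi_{p_{t}}$, and the scale $\sqrt{\norm{A_{t}}_2}$, which makes $\Phi_{t}$ decay at a definite rate unless $S$ has small boundary; on the other hand, once $t$ is large enough that $p_{t}$ is concentrated, $g_{t}\in\{0,1\}$ up to exponentially small error, so $\Phi_{t}$ must reach $0$ by some horizon $T$ that the analysis controls. Chaining these (Eldan's argument) reduces the bound on $\psi_{p}$ to an a priori estimate on how fast the localized covariance inflates, namely $\E\norm{A_{t}}_2=O(1)$, uniformly in $t\le T$ and with a constant independent of $n$.

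To attack that estimate I would track the spectral potential $\Psi_{t}=\E\,\tr(A_{t}^{\,q})$ with $q\asymp\log n$, so that $\Psi_{t}^{1/q}$ is comparable to $\norm{A_{t}}_2$, differentiate with Itô, extract a favorable term from the $-A_{t}^{2}$ drift, and bound the quadratic-variation contribution of the third-moment tensor by the thin-shell quantity $\E_{x\sim p_{t}}\big|\langle x-\mu_{t},A_{t}(x-\mu_{t})\rangle-\tr A_{t}^{2}\big|$, which is exactly what drives the growth of $A_{t}$ in this potential. The outcome is a differential inequality coupling $\sigma_{p_{t}}$, the thin-shell constant of $p_{t}$, with a power of $\Psi_{t}$; since $\sigma$ is itself controlled by $\psi$ in the same dimension, one closes a recursion for $\psi_{n}$. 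With only the elementary third-moment estimate for $\sigma_{p_{t}}$, the long horizon makes this recursion lose powers of $n$ and it yields merely a polynomial improvement over the trivial $\psi_{n}=O(\sqrt n)$ — this is the route that reaches $O(n^{1/4})$. To obtain $\sup_{n}\psi_{n}<\infty$ one genuinely needs the a priori bound $\E\norm{A_{t}}_2=O(1)$ with a dimension-free constant over the whole horizon.

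That a priori bound is the crux and the main obstacle. All that the martingale structure and logconcavity give for free is $\norm{\E A_{t}}_2\le(1+t)^{-1}$ — the covariance shrinks on average — which says nothing about $\E\norm{A_{t}}_2$; the gap is the fluctuation $\E\norm{A_{t}-\E A_{t}}_2$, and a priori this can be as large as $\tr A_{t}$, so controlling it is not a formality. Through the reduction, proving $\E\norm{A_{t}}_2=O(1)$ in this generality is tantamount to a dimension-free thin-shell estimate, and I do not see how to get it from stochastic localization alone. The two routes I would pursue are: (i) import an independent dimension-free thin-shell bound and feed it back through the reduction; or, more promisingly, (ii) replace the isotropic Gaussian tilt $e^{-t\norm x^{2}/2}$ by a non-isotropic, $A_{t}$-adapted tilt — a time-changed or control-theoretic variant of the localization — engineered so that the spectral potential satisfies a contractive inequality $\Psi_{t}'\lesssim-\Psi_{t}$ rather than the super-linear one $\Psi_{t}'\lesssim\Psi_{t}^{1+1/q}$ that the isotropic tilt produces. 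Making one of these work is where essentially all of the difficulty lies.
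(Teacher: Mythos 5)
There is a genuine gap, and you have in fact named it yourself: your text is a program, not a proof. The statement you were asked to prove is the KLS conjecture itself, which this paper does not prove either --- it appears only as Conjecture 1, and the paper's actual theorem is the much weaker bound $\psi_{p}\le C(\tr(A^{2}))^{1/4}$, i.e.\ $O(n^{1/4})$ in the isotropic case. Your opening reduction to the isotropic case is standard and correct, and your description of the localization $p_{t}(x)\propto p(x)e^{\langle\theta_{t},x\rangle-\frac{t}{2}\|x\|^{2}}$, the martingale property of $g_{t}=p_{t}(S)$, and the strategy of running until the Gaussian factor forces good isoperimetry while keeping $g_{t}$ bounded away from $0$ and $1$, matches the paper's Section 3 (Lemmas \ref{lem:volume} and \ref{lem:boundAgivesKLS}) essentially verbatim. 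But everything then hinges on the a priori estimate $\E\|A_{t}\|_{2}=O(1)$ uniformly over the whole horizon needed for concentration, with a dimension-free constant, and you explicitly concede that you do not know how to establish this. Without it nothing is proved.

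For calibration against what the paper actually achieves: the paper controls the potential $\tr(A_{t}^{2})$ only up to time $T\asymp n^{-1/2}$ (Lemma \ref{lem:trace}), at which point the drift $\E_{x,y}\langle x-\mu_{t},y-\mu_{t}\rangle^{3}=O(\tr(A_{t}^{2})^{3/2})$ and the martingale fluctuation of order $\Phi_{t}^{5/4}$ blow up; this yields only $\|A_{t}\|_{2}\le\sqrt{8n}$ and hence the exponent $1/4$, not a constant. Even the paper's appendix, which pushes to $\psi_{n}=e^{O(\sqrt{\log n\log\log n})}$, does so only under an unproven third-moment assumption (one the authors had earlier claimed incorrectly as a lemma). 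So your proposed routes (i) importing a dimension-free thin-shell bound, or (ii) an $A_{t}$-adapted anisotropic tilt making the spectral potential contractive, are plausible research directions --- the second resembles the paper's Section \ref{sec:reduction} control-matrix construction --- but neither is carried out, and the conjecture remains unproved by your argument just as it remains unproved in the paper.
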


For an isotropic logconcave density (all eigenvalues of its covariance
matrix are equal to $1$), the conjectured isoperimetric ratio is
an absolute constant. Note that the KLS constant $\psi_{p}$ is the
reciprocal of the Cheeger constant (this will be more convenient for
comparisons with other constants). The conjecture was formulated by
Kannan, Lovász and Simonovits in the course of their study of the
convergence of a random process (the ball walk) in a convex body.
They proved the following weaker bound. 
\begin{thm}[\cite{KLS95}]
\label{thm:KLS_thm} For any logconcave density $p$ in $\R^{n}$
with covariance matrix $A$, the KLS constant satisfies
\[
\psi_{p}\lesssim\sqrt{\tr(A)}.
\]
\end{thm}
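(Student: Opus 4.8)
The plan is to prove the equivalent lower bound on the reciprocal Cheeger constant by the Lov\'asz--Simonovits localization method, reducing the $n$-dimensional isoperimetric inequality to a one-dimensional estimate for logconcave densities. Since isoperimetry and the covariance are translation invariant, translate so that $\int_{\Rn}x\,p(x)\,dx=0$; then $\tr(A)=\int_{\Rn}\norm x^2 p(x)\,dx=:R^2$. It suffices to prove, for every $t>0$ and every partition $\Rn=S_1\cup S_2\cup S_3$ with $\mathrm{dist}(S_1,S_2)\ge t$, the ``three--set'' inequality $p(S_3)\ge \tfrac{t}{C\sqrt{\tr(A)}}\,\min\{p(S_1),p(S_2)\}$: indeed, taking $S_1=S$, $S_2=\{x:\mathrm{dist}(x,S)>t\}$ and $S_3$ the remaining slab, dividing by $t$ and letting $t\downarrow0$ turns this into $\int_{\partial S}p\ge\tfrac1{C\sqrt{\tr(A)}}\min\{p(S),p(S^c)\}$, i.e.\ $\psi_p\le C\sqrt{\tr(A)}$.

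Suppose the three--set inequality fails for some partition; normalize so $p(S_1)\le p(S_2)$ and put $\lambda=C\sqrt{\tr(A)}/t$, so that $p(S_1)>\lambda p(S_3)$ and $p(S_2)>\lambda p(S_3)$. Apply the localization lemma --- in the version that also preserves one prescribed linear functional (as in \cite{KLS95}) --- to $f=p(\mathbf 1_{S_1}-\lambda\mathbf 1_{S_3})$ and $g=p(\mathbf 1_{S_2}-\lambda\mathbf 1_{S_3})$ (both of positive integral) together with the equality constraint $\int_{\Rn}(\norm x^2-R^2)p(x)\,dx=0$. This yields a ``needle'': a segment $I\subset\Rn$ and an affine weight $\ell\ge0$ along $I$ such that the measure $\mu$ on $I$ with density $p\cdot\ell^{n-1}$ is logconcave (a product of logconcave functions) and, writing $J_i=I\cap S_i$, satisfies $\mu(J_1)>\lambda\mu(J_3)$, $\mu(J_2)>\lambda\mu(J_3)$, and $\int_I\norm x^2\,d\mu=R^2\mu(I)$. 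Decomposing $\norm x^2$ for $x\in I$ into its parts parallel and orthogonal to $I$ gives $\int_I\norm x^2\,d\mu\ge(\text{variance of the coordinate of }\mu\text{ along }I)\cdot\mu(I)$, so the last identity forces this variance $\sigma^2$ to satisfy $\sigma^2\le R^2=\tr(A)$. Also $\mathrm{dist}(J_1,J_2)\ge t$, since $J_i\subseteq S_i$ and $I$ carries the induced metric.

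The remaining ingredient --- and the technical heart --- is the one-dimensional estimate: there is an absolute $c_0>0$ such that for every logconcave probability density $\mu$ on $\R$ with variance $\sigma^2$ and every partition $\R=J_1\cup J_2\cup J_3$ with $\mathrm{dist}(J_1,J_2)\ge t$, one has $\mu(J_3)\ge \tfrac{c_0 t}{\sigma}\min\{\mu(J_1),\mu(J_2)\}$ (and in particular $\mu(J_3)>0$ whenever $\mu(J_1),\mu(J_2)>0$). This is the optimal $\Theta(1/\sigma)$ one-dimensional logconcave Cheeger inequality in a form valid for all $t$: after rescaling to $\sigma=1$, the uniform density on an interval is extremal for $t\lesssim1$, while for $t\gtrsim1$ it is automatic since $\min\{\mu(J_1),\mu(J_2)\}$ is then super-exponentially small (logconcave tail bound) whereas $\mu(J_3)\ge\tfrac12$; the proof is elementary one-dimensional calculus. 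Applying it to the needle (after normalizing $\mu$, which affects neither side) with $\sigma\le\sqrt{\tr(A)}$, and combining with $\min\{\mu(J_1),\mu(J_2)\}>\lambda\mu(J_3)$ and $\mu(J_3)>0$, gives $\tfrac{c_0 t}{\sqrt{\tr(A)}}<\tfrac1\lambda=\tfrac{t}{C\sqrt{\tr(A)}}$, i.e.\ $C<1/c_0$ --- a contradiction once $C$ is chosen to be $1/c_0$. Hence the three--set inequality holds, and the theorem follows with $C=1/c_0$.

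The two places I expect real work are: (i) invoking localization in the form that simultaneously keeps the two sign conditions \emph{and} the second-moment equality --- this is precisely what prevents the needle from being arbitrarily concentrated, so that the \emph{global} quantity $\tr(A)$, rather than a much smaller local variance, controls the final bound; and (ii) proving the one-dimensional Cheeger estimate uniformly in $t$, the borderline regime being $t\asymp\sigma$. The reductions, the variance bound for the needle, and the concluding arithmetic are routine.
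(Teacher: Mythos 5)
The paper does not prove this statement; it is quoted from \cite{KLS95} and used as a black box (e.g.\ in the proof of Theorem \ref{thm:gen-A}, where the case $q=1$ is dispatched by citing it). So your proof should be judged on its own and against the original [KLS95] argument. Your outline is the right one and indeed the one KLS use: reduce to a three--set inequality via the limiting argument, localize to one--dimensional logconcave ``needles'', and finish with the sharp one--dimensional Cheeger estimate. The variance bound for the needle (decomposing $\norm{x}^2$ along and orthogonal to the segment to get $\sigma^2\le R^2$), the one--dimensional inequality, and the final arithmetic are all correct.

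The genuine gap is exactly the step you flagged: you invoke ``the localization lemma, in the version that also preserves one prescribed linear functional.'' The localization lemma of Lov\'asz--Simonovits, and the one used in \cite{KLS95}, preserves \emph{two} sign conditions ($\int f>0$, $\int g>0$) and reduces to needles; it does not, as stated, carry along a third equality constraint $\int h\,p=0$. Getting needles while simultaneously maintaining three functional constraints requires a strictly stronger bisection argument (e.g.\ a Borsuk--Ulam type choice of hyperplane in a two--parameter family at each step, plus a limiting argument), and that is not an ``as in [KLS95]'' citation --- it is a separate lemma one would have to prove. KLS sidestep this: they work with the first absolute moment $\E_p\norm{x-\bar x}\le\sqrt{\tr A}$ rather than the second moment, and arrange the three--set inequality as a product/ratio statement so that the two--constraint localization lemma (or their ``four--function'' form of it) applies directly, without ever needing a needle that inherits a moment equality. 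So: right strategy, correct ancillary lemmas, but the centerpiece --- the constrained localization --- is asserted rather than available off the shelf, and the cited source handles the moment control by a different (product) reduction rather than by preserving an equality through localization.
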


For an isotropic distribution, the theorem gives a bound of $O\left(\sqrt{n}\right)$,
while the conjecture says $O\left(1\right)$. The conjecture has several
important consequences. For example, it implies that the ball walk
mixes in $O^{*}\left(n^{2}\right)$ steps from a warm start in any
isotropic convex body (or logconcave density) in $\R^{n}$; this is
the best possible bound, and is tight e.g., for a hypercube. The KLS
conjecture has become central to modern asymptotic convex geometry.
It is equivalent to a bound on the spectral gap of isotropic logconcave
functions \cite{Ledoux04}. Although it was formulated due to an algorithmic
motivation, it implies several well-known conjectures in asymptotic
convex geometry. We describe these next. 

The \emph{thin-shell} conjecture (also known as the \emph{variance
hypothesis)} \cite{Anttila2003,Bobkov2003} says the following. 
\begin{conjecture}[Thin-shell]
 For a random point $X$ from an isotropic logconcave density $p$
in $\R^{n}$, 
\[
\sigma_{p}^{2}\defeq\E((\|X\|-\sqrt{n})^{2})\lesssim1.
\]
 
\end{conjecture}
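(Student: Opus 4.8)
The plan is to deduce $\sigma_p^2 = O(1)$ from the isotropic KLS conjecture ($\psi_p = O(1)$) and to establish the latter via Eldan's stochastic localization. \emph{Step 1 (thin-shell from the Cheeger constant).} I would first record the elementary reduction $\sigma_p \le C\psi_p$. By Cheeger's inequality the Poincar\'e constant of the logconcave density $p$ is at most $4\psi_p^2$; applying it to the $1$-Lipschitz function $g(x) = \|x\|$ (with $\|\nabla g\| = 1$ a.e.) gives $\Var_p(\|X\|) \le 4\psi_p^2$. Isotropy forces $\E_p\|X\|^2 = n$, so by Jensen $0 \le \sqrt n - \E_p\|X\| \le \Var_p(\|X\|)/(2\,\E_p\|X\|) = O(\psi_p^2/\sqrt n)$, and hence $\sigma_p^2 = \Var_p(\|X\|) + (\sqrt n - \E_p\|X\|)^2 \le C\psi_p^2$. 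Thus it suffices to prove $\psi_p \le C$ for every isotropic logconcave $p$ on $\Rn$.

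\emph{Step 2 (stochastic localization).} Run Eldan's measure-valued martingale $dp_t(x) = p_t(x)\,\langle x - \mu_t, dW_t\rangle$ with $p_0 = p$, where $W_t$ is Brownian motion on $\Rn$ and $\mu_t$, $A_t$ are the mean and covariance of $p_t$; equivalently $p_t(x) \propto e^{\langle c_t,x\rangle - \frac t2\|x\|^2}p(x)$. Eldan's localization lemma reduces the conjecture to a stopping-time estimate: if, for some absolute $T>0$, the process has not left $\{A : A \preceq 2I\}$ by time $T$ except on an event of probability at most $\tfrac12$ (equivalently, $\E\|A_t\|_2 = O(1)$ on $[0,T]$), then $\psi_p = O(1/\sqrt T) = O(1)$. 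So I must keep $\|A_t\|_2$ bounded over a constant-length time interval.

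\emph{Step 3 (controlling $A_t$).} From $dA_t = -A_t^2\,dt + dM_t$, where the matrix martingale $M_t$ has quadratic variation assembled from the third-moment tensor of $p_t$, apply It\^o's formula to $\Phi(t) = \tr(A_t^q)$ for an even $q$; discarding the favorable drift $-q\,\tr(A_t^{q+1})$ leaves
\[
\frac{d}{dt}\,\E\Phi(t) \;\le\; \tfrac{q(q-1)}{2}\,\E\,\tr\!\big(A_t^{q-2}\,\langle dM\rangle_t/dt\big).
\]
The heart of the argument is to bound the right-hand side by $C\,q^2\,\|A_t\|_2\,\E\Phi(t)$ with an \emph{absolute} constant $C$ — i.e. to show the relevant directional third moments of the rescaled logconcave measure $p_t$ are controlled by $\|A_t\|_2$ up to a constant, which one does by bounding them in turn by the Cheeger constant of $p_t$ and running a bootstrap seeded by $\psi_{p_t} \le C\sqrt n$ from Theorem~\ref{thm:KLS_thm}. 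Granting such a bound, Gr\"onwall keeps $\E\Phi(t)^{1/q} \le 2$ for $t$ of order $1/q$; choosing $q \asymp \log n$ to pass from $\tr(A_t^q)^{1/q}$ to $\|A_t\|_2$ yields $T \asymp 1/\log n$, and removing the last logarithmic factor requires replacing the crude $q^2$ It\^o coefficient by a sharper, geometry-aware estimate on $\langle dM\rangle_t$.

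\emph{Main obstacle.} The decisive difficulty is the dimension-free control of the third-moment/quadratic-variation term in Step 3, uniformly along the localization: this is exactly the content of the KLS conjecture, and a merely crude estimate of it produces $\psi_p = O(n^{1/4})$ (hence $\sigma_p = O(n^{1/4})$) rather than the constant bound. Pushing that estimate all the way to an absolute constant is the hard part of the plan.
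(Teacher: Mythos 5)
The statement you are addressing is the thin-shell conjecture, which the paper records as a \emph{conjecture} and does not prove; at the time of the paper it is a major open problem. Your proposal does not prove it either, and you concede as much in your closing paragraph. Step 1 is a correct and standard reduction: $\sigma_{p}\le C\psi_{p}$ via the Cheeger--Poincar\'e inequality applied to $g(x)=\norm x$, so a constant bound on the KLS constant would indeed give the thin-shell conjecture (the paper quotes exactly this implication). But Steps 2--3 then undertake to prove $\psi_{p}=O(1)$, i.e.\ the full KLS conjecture, and the decisive estimate --- a dimension-free bound of the form $Cq^{2}\norm{A_{t}}_{2}\E\Phi(t)$ on the It\^o correction term coming from the third-moment tensor of $p_{t}$ --- is precisely what is not known. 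The crude estimate that is actually available (Lemma \ref{lem:tensor_norm}, giving $\E_{x,y}\left|\left\langle x-\mu,y-\mu\right\rangle \right|^{3}=O(\tr(A^{2})^{3/2})$) yields only $\psi_{p}=O(n^{1/4})$ and hence $\sigma_{p}=O(n^{1/4})$, which is Theorem \ref{thm:n14} and its corollary, not the conjectured $O(1)$. The bootstrap ``seeded by $\psi_{p}\le C\sqrt{n}$'' that you invoke is carried out in the paper's appendix only under an unproven third-moment assumption, and even granting that assumption it terminates at $e^{O(\sqrt{\log n\log\log n})}$, not at a constant.

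So the gap is not a repairable technical detail: your proposal is a reduction of the thin-shell conjecture to the KLS conjecture, followed by a sketch of why the known localization machinery falls short of the latter; it fails exactly at the point you label the ``main obstacle.'' If your intent was instead to prove what the paper actually establishes about $\sigma_{p}$ --- namely $\sigma_{p}=O(n^{1/4})$ --- then your outline does track the paper's route (control of $\tr(A_{t}^{2})$ over a window of length $\Theta(n^{-1/2})$ as in Lemma \ref{lem:trace}, then Lemma \ref{lem:boundAgivesKLS}, then $\sigma_{p}\le C\psi_{p}$), but that is a strictly weaker statement than the one posed.
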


It implies that a random point $X$ from an isotropic logconcave density
lies in a constant-width annulus (a thin shell) with constant probability.
Noting that 
\[
\sigma_{p}^{2}=\E((\|X\|-\sqrt{n})^{2})\le\frac{1}{n}\Var(\|X\|^{2})\lesssim\sigma_{p}^{2},
\]
the conjecture is equivalent to asserting that $\Var(\norm X^{2})\lesssim n$
for an isotropic logconcave density. The following connection is well-known:
$\sigma_{p}\lesssim\psi_{p}$. The current best bound is $\sigma_{p}\lesssim n^{\frac{1}{3}}$
by Guedon and Milman \cite{GuedonM11}, improving on a line of work
that started with Klartag \cite{Klartag2007,Klartag2007b,Fleury2010}.
Eldan \cite{Eldan2013} has shown that the reverse inequality holds
approximately, in a worst-case sense, namely the worst possible KLS
constant over all isotropic logconcave densities in $\R^{n}$ is bounded
by the thin-shell estimate to within roughly a logarithmic factor
in the dimension. This yields the current best bound of $\psi_{p}\lesssim n^{\frac{1}{3}}\sqrt{\log n}$.
A weaker inequality was shown earlier by Bobkov \cite{Bobkov2007}
(see also \cite{Milman2009}). 

The \emph{slicing} conjecture, also called the\emph{ hyperplane conjecture}
\cite{Bourgain1986,Ball88} is the following. 
\begin{conjecture}[Slicing/Isotropic constant]
 Any convex body of unit volume in $\R^{n}$ contains a hyperplane
section of at least constant volume. Equivalently, for any convex
body $K$ of unit volume with covariance matrix $L_{K}^{2}I$, the
isotropic constant $L_{K}\lesssim1$. 
\end{conjecture}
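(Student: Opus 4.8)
This is Bourgain's slicing (hyperplane) conjecture; the present paper does not resolve it, so what follows is a plan of attack together with an honest account of where it stalls and of the partial conclusion it does deliver. The plan is first to trade the geometric statement about hyperplane sections for an analytic statement about isotropic logconcave \emph{densities}. Using Ball's associated bodies $K_{p}(f)$ \cite{Ball88} and the standard two-way correspondence between logconcave functions and convex bodies, bounding $L_{K}$ over all unit-volume convex bodies $K\subseteq\R^{n}$ is equivalent, up to an absolute constant, to bounding the isotropic constant of every isotropic logconcave density on $\R^{n}$; and the latter is in turn controlled by the worst-case thin-shell parameter $\sigma_{n}\defeq\sup_{p}\sigma_{p}$ through an inequality of the form (isotropic constant)$\,\le C\sigma_{n}$ (Eldan--Klartag, Ball--Nguyen). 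So it would suffice to prove the thin-shell conjecture $\sigma_{n}=O(1)$, and since $\sigma_{p}\le C\psi_{p}$ it would even suffice to prove the isotropic KLS conjecture $\psi_{p}=O(1)$.

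To attack $\psi_{p}$ I would run Eldan's stochastic localization \cite{Eldan2013} --- the same engine this paper uses to improve on Theorem~\ref{thm:KLS_thm}. One writes the isotropic measure as a martingale $p_{t}$ of Gaussian-tilted logconcave measures driven by Brownian motion, whose covariance $A_{t}$ obeys a matrix stochastic differential equation with drift governed by the third-moment tensor of $p_{t}$. The recipe is to track a spectral potential of $A_{t}$ (a high power-trace, or a smoothed operator norm), bound its growth using logconcavity of $p_{t}$ together with the running hypothesis $\norm{A_{t}}_{2}\le 2$, stop at the first time $T$ at which the potential threatens to double, and read the expansion of $p$ off the Gaussian factor at time $T$, which gives $\psi_{p}\lesssim 1/\sqrt{T}$. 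The target $\psi_{p}=O(1)$ amounts to showing one can take $T=\Omega(1)$.

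The hard part --- in fact, the whole difficulty --- is precisely that last requirement. In the analysis available here the feedback of the third-moment term into the drift of $A_{t}$ is only controlled well enough to guarantee $T=\Omega(n^{-1/2})$, which is exactly why the method produces $\psi_{p}=O(n^{1/4})$, and the reduction above then produces $L_{K}=O(n^{1/4})$, recovering Klartag's bound for the isotropic constant by a new route. Pushing $T$ up to a constant would require a dimension-free, self-improving bound on how the operator norm of the covariance of the localized measures (equivalently, a thin-shell estimate for those measures) couples back into the SDE --- an estimate that is itself essentially equivalent to the conjecture being attacked. I do not see how to obtain it with the tools of this paper, so the $O(1)$ statement remains open; the concrete contribution here is the alternative $O(n^{1/4})$ bound for $L_{K}$.
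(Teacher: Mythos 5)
You have correctly recognized that this statement is one of the paper's \emph{conjectures}, not one of its theorems: the slicing conjecture remains open, the paper offers no proof of it, and your honest refusal to manufacture one is the right call. Your fallback --- reduce slicing to thin-shell to KLS via $L_{p}\le C\sigma_{p}\le C'\psi_{p}$ (Eldan--Klartag plus the classical $\sigma_{p}\lesssim\psi_{p}$), then bound $\psi_{p}$ by stochastic localization --- is exactly the route the paper takes to its Corollary giving $L_{p}=O(n^{1/4})$, and your diagnosis of where the method stalls (the time horizon $T=\Theta(n^{-1/2})$ forced by the third-moment drift, here controlled through the potential $\tr(A_{t}^{2})$ rather than $\norm{A_{t}}_{2}$ directly) matches the paper's own account. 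In short: no proof exists, none is claimed, and the partial result you extract is the paper's actual contribution obtained by essentially the same argument.
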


The isotropic constant of a general isotropic logconcave density $p$
is defined as $L_{p}=p(0)^{1/n}$. The best current bound is $L_{p}\lesssim n^{1/4}$,
due to Klartag \cite{Klartag2006}, improving on Bourgain's bound
of $L_{p}\lesssim n^{1/4}\log n$ \cite{Bourgain91}. The study of
this conjecture has played an influential role in the development
of convex geometry over the past several decades. It was shown by
Ball that the KLS conjecture implies the slicing conjecture. More
recently, Eldan and Klartag \cite{EldanK2011} showed that the thin
shell conjecture implies slicing, and therefore an alternative (and
stronger) proof that KLS implies slicing: $L_{p}\lesssim\sigma_{p}\lesssim\psi_{p}$.

Here are a few applications of the KLS bound.
\begin{thm}[Poincaré constant \cite{Mazja60,Cheeger69}]
\label{thm:Poincare} For any isotropic logconcave density p in $\R^{n}$
and any smooth function $g$, we have
\[
\Var_{x\sim p}g(x)\lesssim\psi_{p}^{2}\cdot\E_{x\sim p}\norm{\nabla g(x)}_{2}^{2}.
\]
\end{thm}

The Cheeger and log-Sobolev constants also play an important role
in the phenomenon known as concentration of measure. The following
result is due to Gromov and Milman. 
\begin{thm}[Lipschitz concentration \cite{GromovM83}]
 For any $L$-Lipschitz function $g$ in $\R^{n},$ and isotropic
logconcave density $p$, 
\[
\P_{x\sim p}\left(\left|g(x)-\E g\right|\ge\psi_{p}\cdot L\cdot t\right)\le e^{-c\cdot t}.
\]
\end{thm}

A different bound, independent of the Cheeger constant, for the deviation
in length of a random vector was given in a celebrated paper by Paouris
\cite{Paouris2006} and improved by Guedon and Milman \cite{GuedonM11}
(Paouris' result has only the second term in the minimum below, and
is sharp when $t\gtrsim\sqrt{n}$). 
\begin{thm}[\cite{GuedonM11,Paouris2006}]
 For any isotropic logconcave density $p$,
\[
\P_{x\sim p}\left(\left|\norm x-\sqrt{n}\right|\ge t\right)\le e^{-c\cdot\min\left\{ \frac{t^{3}}{n},t\right\} }.
\]
\end{thm}

Our tight log-Sobolev bound will be useful in proving an improved
concentration inequality. Finally, the Cheeger constant is useful
for proving a central limit theorem for convex sets.
\begin{thm}[Central Limit Theorem \cite{Anttila2003}]
Let $K$ be an isotropic symmetric convex set. Let $g_{\theta}(s)=\text{vol}(K\cap\{x^{T}\theta=s\})$
and $g(s)=\frac{1}{\sqrt{2\pi}}\exp(-\frac{s^{2}}{2})$. There are
universal constants $c_{1}$, $c_{2}>0$ such that for any $\delta>0$,
we have 
\[
\text{vol}\left(\left\{ \theta\in S^{n-1}:\text{ }\left|\int_{-t}^{t}g_{\theta}(s)ds-\int_{-t}^{t}g(s)ds\right|\leq c_{1}(\delta+\frac{\psi_{K}}{\sqrt{n}})\text{ for every }t\in\R\right\} \right)\geq1-ne^{-c_{2}\delta^{2}n}.
\]
\end{thm}

For more background on these conjectures, we refer the reader to \cite{brazitikos2014geometry,alonso2015approaching,ArtsteinGM2015}. 

\subsection{Log-Sobolev constant}

The KLS conjecture was motivated by the study of the convergence of
a Markov chain, the \emph{ball walk} in a convex body. To sample uniformly
from a convex body, the ball walk starts at some point in the body,
picks a random point in the ball of radius $\delta$ around the current
point and if the chosen point is in the body, it steps to the new
point. It can be generalized to sampling any logconcave density by
using a Metropolis filter. As shown in \cite{KLS97}, the ball walk
applied to a logconcave density mixes in $O^{*}(n^{2}\psi_{p}^{2})$
steps from a warm start, which using the current-best bound \cite{LeeV17KLS}
is $O^{*}(n^{2.5})$. Looking closer, from a starting distribution
$Q_{o}$, the distance of the distribution obtained after $t$ steps
from $Q_{o}$ to the stationary distribution $Q$ drops as 
\[
d(Q_{t},Q)\le d(Q_{0},Q)\left(1-\frac{\phi^{2}}{2}\right)^{t}
\]
where $\phi$ is the conductance of the Markov chain and $d(.,.)$
is the $\chi$-squared distance. The conductance can be viewed as
the Cheeger constant of the Markov chain. Thus the number of steps
needed is $O(\phi^{-2}\log(1/d(Q_{0},Q)))$. Roughly speaking, for
the ball walk applied to a logconcave density $p$, the conductance
is $\Omega(1/(n\psi_{p}))$, leading to the bound of $O^{*}(n^{2}\psi_{p}^{2})$
steps from a warm start. The dependence on the starting distribution
leads to an additional factor of $n$ in the running time when the
starting distribution is not warm (i.e., $d(Q_{1},Q)$ after one step
can be $e^{-\Omega(n)}$ ). This is a general issue for Markov chains.
One way to address this is via the log-Sobolev constant \cite{diaconis1996,Ledoux1999}.
We first define it for a density, then for a Markov chain.
\begin{defn}
For a density $p$, the log-Sobolev constant $\rho_{p}$ is the largest
$\rho$ such that for every smooth function $f:\R^{n}\rightarrow\R$
with $\int f^{2}dp=1$, we have
\[
\frac{\rho}{2}\int f^{2}\log f^{2}dp\leq\int\norm{\nabla f}^{2}dp.
\]
\end{defn}

A closely related parameter is the following.
\begin{defn}
The \emph{log-Cheeger} constant $\kappa_{p}$ of a density $p$ in
$\R^{n}$ is
\end{defn}

\[
\kappa_{p}=\inf_{S\subseteq\R^{n}}\frac{p(\partial S)}{\min\left\{ p(S),p(\R^{n}\setminus S)\right\} \sqrt{\log\left(\frac{1}{p(S)}\right)}}.
\]
It is known that $\rho_{p}=\Theta(\kappa_{p}^{2})$ (see e.g., \cite{ledoux1994simple}).
The log-Cheeger constant shows more explicitly that the log-Sobolev
constant is a uniform bound on the expansion ``at every scale''. 

For a reversible Markov chain with transition operator $P$ and stationary
density $Q$, we can define the log-Sobolev constant $\rho(P)$ be
the largest $\rho$ such that for every smooth functions satisfying
$f:\R^{n}\rightarrow\R$ with $\int f^{2}dp=1$, we have
\[
\frac{\rho}{2}\int f^{2}\log f^{2}dQ\leq\int_{x}\int_{y}\norm{f(x)-f(y)}^{2}P(x,y)dQ(x).
\]
Diaconis and Saloff-Coste \cite{diaconis1996} show that the distribution
after $t$ steps satisfies $\Ent(Q_{t})\le e^{-c\cdot\rho(P)t}\cdot\Ent(Q_{0})$
where $\Ent(Q_{t})=\int Q_{t}\log\frac{Q_{t}}{Q_{0}}dQ_{0}$ is the
entropy with respect to the stationary distribution. Thus, the dependence
of the mixing time on the starting distribution goes down from $\log(1/d(Q_{0},Q))$
to $\log\log(1/d(Q_{0},Q)))$. Moreover, just as in the case of the
Cheeger constant, for the ball walk, the Markov chain parameter is
determined by the log-Sobelov constant $\rho_{p}$ for sampling from
the density $p$ (See Theorem \ref{thm:speedy}). It is thus natural
to ask for the best possible bound on $\rho_{p}$ or $\kappa_{p}$.
Unlike the Cheeger constant, which is conjectured to be at least a
constant for isotropic logconcave densities, it is known that $\rho_{p}$
cannot be bounded from below by a universal constant, in particular
for distributions that are not ``$\psi_{2}$'' (distributions with
sub-Gaussian tail).

Kannan, Lovász and Montenegro \cite{KannanLM06} gave the following
bound on $\kappa_{p}$. Our main result (Theorem \ref{thm:logsob})
is an improvement of this bound to the best possible when the distribution
is isotropic. 
\begin{thm}[\cite{KannanLM06}]
For a logconcave density $K\subset\R^{n}$ with support of diameter
$D$, $we$ have $\kappa_{p}\gtrsim\frac{1}{D}$ and $\rho_{p}\gtrsim\frac{1}{D^{2}}$.
\end{thm}

From the above bound, it follows that the ball walk mixes in $O\left(n^{2}D^{2}\right)$
\emph{proper} steps of step size $\delta=\Theta(\frac{1}{\sqrt{n}})$.
A proper step is one where the current point changes. For an isotropic
logconcave density $\delta=\Theta(\frac{1}{\sqrt{n}})$ is small enough
so that the number of wasted steps is of the same order as the number
of proper steps in expectation. Moreover, by restricting to a ball
of radius $D=O(\sqrt{n})$, the resulting distribution remains near-isotropic
and very close in total variation distance to the original. Together,
this considerations imply a bound of $O^{*}(n^{3})$ proper steps
from any starting point as shown in \cite{KannanLM06}. Is this bound
the best possible? From a warm start, the KLS conjecture implies a
bound of $O^{*}(n^{2})$ steps and current best bound is $O^{*}(n^{2.5})$.
Thus, the mixing of the ball walk, which was the primary motivation
for formulation of the KLS conjecture, also provides a compelling
reason to study the log-Sobolev constant. Estimating the log-Sobolev
constant was posed as an open problem by Frieze and Kannan \cite{frieze1999}
when they analyzed the log-Sobolev constant of the grid walk to sample
sufficiently smooth logconcave densities.

\subsection{Results}

We prove the following bound, conjectured in this form in \cite{Vem-notes15}. 
\begin{thm}
\label{thm:n14} For any logconcave density $p$ in $\R^{n}$, with
covariance matrix $A$,
\[
\psi_{p}=O\left(\tr\left(A^{2}\right)\right)^{1/4}.
\]
\end{thm}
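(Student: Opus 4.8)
The plan is to use Eldan's stochastic localization scheme. Starting from the logconcave density $p$, I would run the measure-valued process $dp_t(x) = p_t(x) \langle x - a_t, dW_t \rangle$ where $W_t$ is Brownian motion, $a_t$ is the barycenter of $p_t$, and $A_t$ is its covariance matrix. A standard computation shows this process can equivalently be written as tilting $p$ by a Gaussian factor: $p_t(x) \propto p(x) e^{c_t \cdot x - t\|x\|^2/2}$, so $p_t$ stays logconcave, and the barycenter $a_t$ follows the SDE $da_t = A_t\, dW_t$. The key structural fact is that because the Gaussian factor contributes $tI$ to the inverse covariance, the eigenvalues of $A_t$ shrink: in fact we always have $\|A_t\|_2 \le 1/t$ deterministically (and more to the point $A_t \preceq (A_0^{-1} + tI)^{-1}$ in a suitable comparison sense, although logconcavity only gives the spectral bound directly). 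Running the process for time $T$ and stopping it early if $\|A_t\|_2$ ever exceeds, say, $2$, produces a density whose covariance is well controlled, while the localization inequality of Eldan relates the KLS constant of $p$ to the KLS constant of the localized measures $p_t$ plus the distance traveled by the barycenter, $\E\|a_T - a_0\|$.

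The heart of the argument is to bound how fast the covariance matrix $A_t$ can grow along the process, since that is what controls the stopping time $T$ we can afford and hence the final bound. By It\^o's formula one derives an SDE for $A_t$: roughly $dA_t = \text{(third-moment tensor applied to } dW_t) - A_t^2\, dt$. The $-A_t^2\,dt$ drift is what drives eigenvalues down, but the martingale term can push the top eigenvalue up, and the quadratic variation of $\|A_t\|_2$ is governed by a third-moment quantity of the logconcave measure $p_t$. The crucial estimate — and this is the main obstacle — is to control this third-moment/martingale term in terms of $\tr(A_t^2)$ (or $\|A_t\|_2^2$ and $\tr(A_t^2)$ together). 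Concretely one wants to show that $\frac{d}{dt}\E\,\tr(A_t^2)$ or the drift of the top eigenvalue is at most something like $C\psi^2 \tr(A_t^2)$ times itself — a self-bounding inequality — so that by a Gr\"onwall-type argument $\|A_t\|_2$ stays $O(1)$ for $t$ up to order $1/(\psi\sqrt{\tr(A^2)})$ or the appropriate scale. Bounding the third moment of a logconcave density in terms of its covariance and its own KLS constant is exactly where one invokes a bootstrapping hypothesis: assume the KLS bound $\psi_q \le C(\tr(\cov q)^2)^{1/4}$ for all logconcave $q$ "so far" and use it inside the estimate for $p_t$.

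In more detail, I expect the proof to run by a continuity/bootstrap argument on the quantity $\psi_n \defeq \sup\{\psi_p \cdot (\tr(\cov p)^2)^{-1/4}\}$ over isotropic-or-general logconcave $p$ in $\R^n$. Assume inductively (or via a fixed-point argument on the supremum) that $\psi_p \le C(\tr(A^2))^{1/4}$ holds with some constant; run stochastic localization; use Eldan's inequality
\[
\frac{1}{\psi_p} \ge c\left(\frac{1}{\psi_{p_T}} - \text{(barycenter drift term)}\right)
\]
more precisely in the form that bounds $\psi_p$ by $\psi_{p_T}$ plus $\E\|a_T - a_0\|$-type terms; control $\psi_{p_T}$ using $\|A_T\|_2 \le 2$ (so $\psi_{p_T} = O(1)$ by the isotropic-scale version of the hypothesis, after rescaling); and control the barycenter drift $\E\|a_T-a_0\|^2 \le \int_0^T \E\,\tr(A_t^2)\,dt$ using the self-bounding covariance estimate. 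Optimizing over $T$ — taking $T$ roughly of order $1/(\psi_p \sqrt{\tr(A^2)})$ so that both the barycenter drift and the residual are balanced — yields $\psi_p^2 \le C\psi_p\sqrt{\tr(A^2)}$ up to the bootstrap, i.e. $\psi_p \le C\sqrt{\tr(A^2)}\,/\,\psi_p \cdot \psi_p$... unwinding the recursion $\psi_n \le C\sqrt{\psi_n}$-type gives the claimed $(\tr(A^2))^{1/4}$.

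The step I expect to be genuinely hard is the third-moment control: showing that along the localization the expected rate of increase of $\tr(A_t^2)$ (equivalently, the quadratic variation of the operator norm) is bounded by a constant times $\psi_{p_t}^2 \cdot \|A_t\|_2 \cdot \tr(A_t^2)$ or similar, so that plugging in the inductive KLS bound closes the loop. This requires a clean inequality relating the third moment tensor $\E_{p_t}[(x-a_t)^{\otimes 3}]$ of a logconcave measure to its covariance and its own isoperimetric/Poincar\'e constant — a statement of independent interest that must be proved from logconcavity, presumably via a Poincar\'e-inequality argument applied to coordinate functions against the covariance quadratic form. Everything else (It\^o calculus for $a_t$ and $A_t$, the Gr\"onwall estimate, the final optimization over $T$) should be comparatively routine once that key inequality is in hand.
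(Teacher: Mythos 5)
You have correctly identified the overall framework (Eldan's stochastic localization driven by $dp_t(x) = p_t(x)\langle x-\mu_t, dW_t\rangle$, with the potential $\tr(A_t^2)$ as the quantity to control), and you have correctly identified that the drift of this potential is governed by a third-moment quantity of $p_t$ which must be bounded. However, your proposal contains two substantive gaps, one conceptual and one structural.

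The conceptual gap is your belief that bounding the third moment term requires bootstrapping on the KLS/Poincar\'e constant of $p_t$. This is not needed for the $n^{1/4}$ result, and in fact removing that dependence is precisely what makes the paper's argument work. The key third-moment estimate is
\[
\E_{x,y\sim p_t}\big|\langle x-\mu_t,\,y-\mu_t\rangle\big|^3 = O\big(\tr(A_t^2)^{3/2}\big),
\]
and it is proved purely from logconcavity via the reverse-H\"older moment inequality (Lemma \ref{lem:lcmom}): for fixed $x$, $\langle x,y\rangle$ is a one-dimensional logconcave variable so its third moment is $O((x^T A_t x)^{3/2})$, and then $A_t^{1/2}x$ is itself logconcave so one more application gives $O(\tr(A_t^2)^{3/2})$. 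No Poincar\'e constant appears. The companion martingale-term estimate (Lemma \ref{lem:tensorestimate}) similarly uses only logconcave moments, giving $\|v_t\|_2 = O(\tr(A_t^2)^{5/4})$. With $\Phi_t = \tr(A_t^2)$ normalized to $\Phi_0 = n$, the drift scales like $\Phi_t^{3/2}\sim n^{3/2}$ and the quadratic variation like $\Phi_t^{5/2}\sim n^{5/2}$, so both permit running to time $T\sim n^{-1/2}$, and Gaussian isoperimetry at that time gives $\psi_p = O(T^{-1/2}) = O(n^{1/4})$ directly. Your bootstrap, by contrast, is what appears in the paper's appendix under an \emph{unproven} third-moment assumption, where it yields a stronger bound --- but that is a separate conditional result, not the route to Theorem \ref{thm:n14}. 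Your recursion arithmetic ``$\psi_p^2 \le C\psi_p\sqrt{\tr(A^2)}$ $\Rightarrow$ $(\tr A^2)^{1/4}$'' is also not correct as written; that inequality only gives $\psi_p \le C\sqrt{\tr A^2}$.

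The structural gap is in how you propose to convert the covariance control into an isoperimetric bound. You invoke ``Eldan's inequality'' relating $1/\psi_p$ to $1/\psi_{p_T}$ minus a barycenter-drift term $\E\|a_T - a_0\|$, and propose stopping when $\|A_t\|_2$ exceeds $2$. The paper does not use a $\psi_p$-versus-$\psi_{p_T}$ comparison and never estimates the barycenter displacement. Instead, one fixes a set $E$ with $\int_E p = 1/2$; the quantity $g_t = \int_E p_t$ is a martingale with $d[g]_t/dt \le \|A_t\|_2$, so if $\int_0^T\|A_t\|_2\,dt$ is small (which follows from $\tr(A_t^2) = O(n)$ and $T\sim n^{-1/2}$), then $g_T$ stays near $1/2$ with high probability by Dambis--Dubins--Schwarz and the reflection principle. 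One then applies the Gaussian isoperimetric bound (Theorem \ref{thm:Gaussian-iso}) to $p_T$, which has a Gaussian factor of variance $1/T$, and takes expectations using that $\int_{\partial E}p_t\,dx$ is a martingale. Tracking the barycenter is neither necessary nor sufficient; what matters is the quadratic variation of the set measure $g_t$, which is controlled by $\|A_t\|_2$, hence by $\sqrt{\tr(A_t^2)}$.

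In short: your identification of the potential $\tr(A_t^2)$ and of the third-moment term as the key obstacle is correct, but the third moment is controlled unconditionally by reverse H\"older, not by a Poincar\'e bootstrap, and the final step is a fixed-set martingale argument plus Gaussian isoperimetry, not a $\psi$-comparison with a barycenter-drift correction.
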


For isotropic $p$, this gives a bound of $\psi_{p}\lesssim n^{\frac{1}{4}}$,
improving on the current best bound. The following corollary is immediate.
We note that it also gives an alternative proof of the central limit
theorem for logconcave distributions, via Bobkov's theorem \cite{Bobkov2007}.
\begin{cor}
For any logconcave density $p$ in $\R^{n}$, the isotropic (slicing)
constant $L_{p}$ and the thin-shell constant $\sigma_{p}$ are bounded
by $O\left(n^{1/4}\right)$.
\end{cor}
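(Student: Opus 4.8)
The plan is to obtain the corollary directly from Theorem~\ref{thm:n14}, combined with the classical one-directional comparison inequalities recalled in the introduction. First I would reduce to the isotropic case: the quantities $L_{p}$, $\sigma_{p}$, $Q_{p}$ and $D_{p}$ are, as defined above, attached to isotropic logconcave densities (or to densities with covariance a multiple of the identity), and the KLS constant $\psi_{p}$ transforms correctly under the affine map putting $p$ in isotropic position. Hence it suffices to bound each constant for an isotropic $p$, for which the covariance is $A=I_{n}$, so that $\tr(A^{2})=n$ and Theorem~\ref{thm:n14} yields $\psi_{p}\le C n^{1/4}$.

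Next I would invoke the known inequalities in sequence. The Poincar\'e bound is $Q_{p}\le 2\psi_{p}$ (Maz'ja--Cheeger), the thin-shell bound is $\sigma_{p}\le C\psi_{p}$, and the concentration bound $D_{p}\le C\psi_{p}$ is precisely Lemma~\ref{lem:conc} (Gromov--Milman). For the isotropic constant I would use the Eldan--Klartag inequality $L_{p}\le C\sigma_{p}$, which composed with the thin-shell bound gives $L_{p}\le C\psi_{p}$. Substituting $\psi_{p}\le C n^{1/4}$ into each of these four estimates produces the claimed $O(n^{1/4})$ bounds.

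I do not expect a genuine obstacle here: the corollary is essentially a bookkeeping consequence of the main theorem together with results already in the literature. The only point that warrants a line of care is verifying that the affine reduction to isotropic position is legitimate for each of the four constants, so that Theorem~\ref{thm:n14} may be applied with $A=I$; this holds since each of $L_{p}$, $\sigma_{p}$, $Q_{p}$, $D_{p}$ is either defined only in the isotropic normalization or is invariant under the normalizing affine map.
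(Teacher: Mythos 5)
Your proposal is correct and matches the paper's (implicit) argument exactly: the paper states the corollary is ``immediate'' from Theorem~\ref{thm:n14}, and the intended route is precisely to apply the theorem with $A=I$ in the isotropic case and then invoke the classical comparisons $Q_{p}\le 2\psi_{p}$, $\sigma_{p}\le C\psi_{p}$, $D_{p}\le C\psi_{p}$, and $L_{p}\le C\sigma_{p}$, all of which the introduction recalls. No gap or deviation to report.
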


We mention an algorithmic consequence. The ball walk in a convex body
$K\subseteq\R^{n}$ starts at some point $x_{0}$ in its interior
and at each step picks a uniform random point in the ball of fixed
radius $\delta$ centered at the current point, and goes to the point
if it lies in $K$. The process converges to the uniform distribution
over $K$ in the limit. Understanding the precise rate of convergence
is a major open problem with a long line of work and directly motivated
the KLS conjecture \cite{LS90,LS92,LS93,KLS95,KLS97,LV06,LV07}. Our
improvement for the KLS constant gives the following bound on the
rate of convergence. 
\begin{cor}
The mixing time of the ball walk to sample from an isotropic logconcave
density from a warm start is $O^{*}\left(n^{2.5}\right)$.
\end{cor}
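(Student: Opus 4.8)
The plan is to treat this purely as a substitution into the existing conductance analysis of the ball walk, so that no new geometric work beyond Theorem~\ref{thm:n14} is required. Recall that the $\delta$-ball walk with stationary density $p$ proposes, from the current point $x$, a uniform random $y\in B(x,\delta)$ and moves to $y$ with probability $\min\{1,p(y)/p(x)\}$ (the Metropolis filter makes $p$ stationary; for a convex body this is just rejection of $y\notin K$). For an isotropic logconcave $p$ I would fix $\delta=\Theta^{*}(1/\sqrt{n})$, the standard scale: for this choice the \emph{local conductance} (the acceptance probability of a proposed step) is bounded below by an absolute constant outside an exponentially small fraction of the mass, a classical fact that does not depend on any KLS-type estimate.

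The key input I would invoke is the lower bound on the conductance $\Phi$ of this chain in terms of the KLS constant $\psi_{p}$, as established in the line of work of Kannan--Lov\'asz--Simonovits and Lov\'asz--Vempala. The argument proceeds by dichotomy on a measurable partition $\R^{n}=S_{1}\cup S_{2}\cup S_{3}$. If $\mathrm{dist}(S_{1},S_{2})\gtrsim\delta/\sqrt{n}$, the KLS isoperimetric inequality in the form $\pi(S_{3})\ge c\,\mathrm{dist}(S_{1},S_{2})\,\psi_{p}^{-1}\min\{\pi(S_{1}),\pi(S_{2})\}$ forces the separating set $S_{3}$ to carry a lot of mass, and since most of it has constant local conductance the ergodic flow between $S_{1}$ and $S_{2}$ is at least $c\,\psi_{p}^{-1}(\delta/\sqrt{n})\min\{\pi(S_{1}),\pi(S_{2})\}$. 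If instead $\mathrm{dist}(S_{1},S_{2})$ is below this threshold, a one-step overlap lemma (the total-variation distance between the uniform laws on two $\delta$-balls at distance $t$ is $\lesssim t\sqrt{n}/\delta$, hence bounded away from $1$) produces direct flow of the same order. Combining the cases gives $\Phi\gtrsim\delta/(\psi_{p}\sqrt{n})=\Omega^{*}(1/(\psi_{p}n))$.

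From a warm start the standard conductance-to-mixing bound yields mixing time $\tau=O(\Phi^{-2}\log(\cdot))=O^{*}(\psi_{p}^{2}n^{2})$. It now remains only to insert the new estimate: an isotropic density has covariance $A=I$, so $\tr(A^{2})=n$ and Theorem~\ref{thm:n14} gives $\psi_{p}\le Cn^{1/4}$. Substituting, $\tau=O^{*}(n^{1/2}\cdot n^{2})=O^{*}(n^{2.5})$, which is the claim; note that feeding in the KLS conjecture itself ($\psi_{p}=O(1)$) the same computation recovers the optimal $O^{*}(n^{2})$, consistent with the discussion in the introduction.

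I do not anticipate a genuine obstacle, since every ingredient other than Theorem~\ref{thm:n14} is already available in the literature; the only care required is bookkeeping --- stating the ball-walk reduction with the correct step size and the correct (quadratic) dependence on $\psi_{p}$, and confirming that the ``warm start'' hypothesis here is exactly the one under which that reduction was proved. The single delicate point is the calibration of $\delta$: it must be small enough that the local conductance is a constant, yet the conductance analysis must still retain the full $\delta/\sqrt{n}$ factor, and $\delta=\Theta^{*}(1/\sqrt{n})$ is the choice that balances these and produces the $n^{2}$ base rate that the $\psi_{p}^{2}$ factor then multiplies.
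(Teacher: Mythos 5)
Your proposal is correct and is precisely the intended argument: the paper states this corollary without further proof as an immediate consequence of Theorem~\ref{thm:n14}, relying on the classical reduction (Kannan--Lov\'asz--Simonovits, Lov\'asz--Vempala) that the ball walk at step size $\delta=\Theta^{*}(1/\sqrt{n})$ has conductance $\Omega^{*}(1/(n\psi_{p}))$ and hence mixes in $O^{*}(n^{2}\psi_{p}^{2})$ steps from a warm start. Substituting $\psi_{p}=O(n^{1/4})$ gives $O^{*}(n^{2.5})$, exactly as you computed.
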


Turning to the log-Sobolev constant, our main theorem is the following.
\begin{thm}
\label{thm:logsob}For any isotropic logconcave density $p$ with
support of diameter $D$, the log-Cheeger constant satisfies $\kappa_{p}\gtrsim\frac{1}{\sqrt{D}}$
and the log-Sobolev constant satisfies $\rho_{p}\gtrsim\frac{1}{D}$. 
\end{thm}

As we show in Section \ref{sec:tight}, these bounds are the best
possible (Lemma \ref{lem:lower_bound_logsob}). The improved bound
has interesting consequences. The first is an improved concentration
of mass inequality for logconcave densities. In particular, this gives
an alternative proof of Paouris' (optimal) inequality \cite{Paouris2006}
for the large deviation case ($t\gtrsim\sqrt{n})$.
\begin{thm}
\label{thm:conc}For any $L$-Lipschitz function $g$ in $\Rn$ and
any isotropic logconcave density $p$, we have that
\[
\P_{x\sim p}\left(\left|g(x)-\bar{g}(x)\right|\geq L\cdot t\right)\leq\exp(-\frac{c\cdot t^{2}}{t+\sqrt{n}})
\]
where $\bar{g}$ is the median or mean of $g(x)$ for $x\sim p$. 
\end{thm}

For the Euclidean norm, this gives 

\[
\P_{x\sim p}\left(\norm x\geq t\cdot\sqrt{n}\right)\leq\exp(-c\cdot\min\left\{ t,t^{2}\right\} \sqrt{n}).
\]
As mentioned earlier, the previous best bound was $\exp(-c\cdot\min\left\{ t,t^{3}\right\} \sqrt{n})$
\cite{GuedonM11} for the Euclidean norm and $\exp(-\frac{t}{n^{1/4}})$
for a general Lipschitz function $g$ using our improved bound on
the Cheeger constant. The new bound can be viewed as an improvement
and generalization of both. Also this concentration result does not
need bounded support for the density $p$. 

Next we bound the small ball probability in terms of the Cheeger constant.
\begin{thm}
\label{thm:small-ball}Assume that for $k\geq1$, $\psi_{p}=O\left(\tr A^{k}\right)^{\frac{1}{2k}}$
for all logconcave distribution $p$ with covariance $A$. Then, for
any isotropic logconcave distribution $p$ and any $0\leq\varepsilon\leq c_{1}$,
we have that
\[
\P_{x\sim p}\left(\norm x_{2}^{2}\leq\varepsilon n\right)\leq\varepsilon^{c_{2}k^{-1}n^{1-1/k}}
\]
for some universal constant $c_{1}$ and $c_{2}$.
\end{thm}

\begin{rem*}
Our KLS estimate verifies the case $k=2$ and gives $\P_{x\sim p}\left(\norm x_{2}^{2}\leq\varepsilon n\right)\leq\varepsilon^{c_{2}\sqrt{n}}$,
which recovers the current best small ball estimate, also due to Paouris
\cite{paouris2012small}.
\end{rem*}
As another consequence, we circle back to the analysis of the ball
walk to resolve the open problem posed by Frieze and Kannan \cite{frieze1999}.
\begin{thm}
\label{thm:speedy}The ball walk with step size $\delta=\Theta(1/\sqrt{n})$
applied to an isotropic convex body $K$ in $\R^{n}$ with support
of diameter $D$ mixes in $O(n^{2}D\log\log D)$ proper steps from
any starting point $x$ that is $n^{-O(1)}$ far from the boundary
of $K$. 
\end{thm}

The choice of $\delta=\Theta\left(1/\sqrt{n}\right)$ is the best
possible for isotropic logconcave distributions (Lemma \ref{lem:lower_bound_ball}).
The bound on the number of steps improves on the previous best bound
of $O^{*}(n^{2}D^{2})$ proper steps for the mixing of the ball walk
from an arbitrary starting point \cite{KannanLM06} and as we show
in Section \ref{sec:tight}, $O(n^{2}D)$ is the best possible bound.
For sampling, we can restrict the density to a ball of radius $O(\sqrt{n})$
losing only a negligibly small measure, so the bound is $O(n^{2.5})$
from an arbitrary starting point, which matches the current best bound
from a warm start. The mixing time from a warm start for an isotropic
logconcave density is $O(n^{2}\psi_{p}^{2})$, or $O(n^{2})$ if the
KLS conjecture is true; but from an arbitrary start, our analysis
is essentially the best possible, independent of any further progress
on the conjecture! 

\subsection{Approach for Cheeger constant: stochastic localization}

The KLS conjecture is true for Gaussian distributions. More generally,
for any distribution whose density function is the product of the
Gaussian density for $N(0,\sigma^{2}I)$ and any logconcave function,
it is known that the expansion is $\text{\ensuremath{\Omega}(1/\ensuremath{\sigma})}$
\cite{CV2014}. This fact is used crucially in the Gaussian cooling
algorithm of \cite{CV2015} for computing the volume of a convex body
by starting with a standard Gaussian restricted to a convex body and
gradually making the variance of the Gaussian large enough that it
is effectively uniform over the convex body of interest. Our overall
strategy is similar in spirit --- we start with an arbitrary isotropic
logconcave density and gradually introduce a Gaussian term in the
density of smaller and smaller variance. The isoperimetry of the resulting
distribution after sufficient time will be very good since it has
a large Gaussian factor. And crucially, it can be related to the isoperimetry
of the initial distribution. To achieve the latter, we would like
to maintain the measure of a fixed subset close to its initial value
as the distribution changes. For this, our proof uses the localization
approach to proving high-dimensional inequalities \cite{LS93,KLS95},
and in particular, the elegant stochastic version introduced by Eldan
\cite{Eldan2013} and used in subsequent papers \cite{EldanL13,EldanL15}. 

We fix a subset $E$ of the original space with measure one half according
to the original logconcave distribution (it suffices to consider such
subsets to bound the isoperimetric constant). In standard localization,
we then repeatedly bisect space using a hyperplane that preserves
the volume fraction of $E$. The limit of this process is a partition
into 1-dimensional logconcave measures (``needles''), for which
inequalities are much easier to prove. This approach runs into major
difficulties for proving the KLS conjecture. While the original measure
might be isotropic, the 1-dimensional measures could, in principle,
have variances roughly equal to the trace of the original covariance
(i.e., long thin needles), for which only much weaker inequalities
hold. Stochastic localization can be viewed as the continuous time
version of this process, where at each step, we pick a random direction
and multiply the current density with a linear function along the
chosen direction. Over time, the density can be viewed as a spherical
Gaussian times a logconcave function, with the Gaussian gradually
reducing in variance. When the Gaussian becomes sufficiently small
in variance, then the overall distribution has good isoperimetric
coefficient, determined by the inverse of the Gaussian standard deviation
(such an inequality can be shown using standard localization, as in
\cite{CV2014}). An important property of the infinitesimal change
at each step is \emph{balance -- }the density at time $t$ is a martingale
and therefore the expected measure of any subset is the same as the
original measure. Over time, the measure of a set $E$ is a random
quantity that deviates from its original value of $\frac{1}{2}$ over
time. The main question then is: what direction to use at each step
so that (a) the measure of $E$ remains bounded and (b) the Gaussian
part of the density has small variance. We show that the simplest
choice, namely a pure random direction chosen from the uniform distribution
suffices. The analysis needs a potential function that grows slowly
but still maintains good control over the spectral norm of the current
covariance matrix. The direct choice of $\norm{A_{t}}_{\spe}$, where
$A_{t}$ is the covariance matrix of the distribution at time $t$,
is hard to control. We use $\tr(A_{t}^{2})$$.$ This gives us the
improved bound of $O(n^{1/4})$.

\subsection{Approach for log-Sobolev constant: Stieltjes potential }

We apply the localization process to bound the log-Sobolev constant.
However, unlike in the case of the Cheeger constant, we cannot simply
work with subsets of measure $1/2$ or a constant; it is crucial to
consider arbitrarily small subsets. For the Cheeger constant, the
spectral norm of the covariance the $\norm{A_{t}}_{\spe}$ is bounded
via a potential function of the form $\tr\left(A_{t}^{2}\right)$.
However, for the log-Sobolev constant, to obtain a tight result without
extraneous logarithmic factors, we study the Stieltjes-type potential
$\tr\left((uI-A_{t})^{-q}\right)$. 

To define the potential, fix integer $q\geq1$ and a positive number
$\Phi>0$. Let $u(X)$ be the real-valued function on $n\times n$
symmetric matrices defined by the solution of the following equation
\begin{equation}
\tr((uI-X)^{-q})=\Phi\text{ and }X\preceq uI\label{eq:def_u}
\end{equation}
Note that this is the same as the solution to $\sum_{i=1}^{n}\frac{1}{(u-\lambda_{i})^{q}}=\Phi$
and $\lambda_{i}\leq u$ for all $i$ where $\lambda_{i}$ are the
eigenvalues of $X$. Similar potentials have been used to analyze
empirical covariance estimation \cite{srivastava2013covariance},
to build graph sparsifiers \cite{BSS12,allen2015spectral,lee2015constructing,lee2017sdp}
and to solve bandit problems \cite{audibert2013regret}.

The proof has the following ingredients: 
\begin{enumerate}
\item We show that for time $t$ up to $O(n^{-\frac{1}{2}})$, the spectral
norm of the covariance stays bounded (by a constant, say $2$) with
large probability (Lemma \ref{lem:norm_At}). This requires the use
of the Stieltjes-type potential function. 
\item Then we consider any measurable subset $S$, with $g_{0}=p_{0}(S)$
and analyze its measure at time $t$, i.e., $g_{t}=p_{t}(S)$. In
particular we show that up to time $\left(\log g_{0}+D\right)^{-1}$,
the expectation of $g_{t}\sqrt{\log(1/g_{t})}$ remains large, i.e.,
a constant factor times its initial value (Lemma \ref{lem:g_sqrt_g}). 
\item The density at time $t$ has a Gaussian component of variance $1/t$.
For such a distribution, the log-Cheeger constant is $\Omega(\sqrt{t})$
(Theorem \ref{thm:Gaussian-iso}).
\end{enumerate}
Together these facts will imply the main theorem. 

\section{Preliminaries}

In this section, we review some basic definitions and theorems that
we use. 

\subsection{Stochastic calculus}

In this paper, we only consider stochastic processes given by stochastic
differential equations. Given real-valued stochastic processes $x_{t}$
and $y_{t}$, the quadratic variations $[x]_{t}$ and $[x,y]_{t}$
are real-valued stochastic processes defined by
\[
[x]_{t}=\lim_{|P|\rightarrow0}\sum_{n=1}^{\infty}\left(x_{\tau_{n}}-x_{\tau_{n-1}}\right)^{2}\quad\text{and}\quad[x,y]_{t}=\lim_{|P|\rightarrow0}\sum_{n=1}^{\infty}\left(x_{\tau_{n}}-x_{\tau_{n-1}}\right)\left(y_{\tau_{n}}-y_{\tau_{n-1}}\right),
\]
where $P=\{0=\tau_{0}\leq\tau_{1}\leq\tau_{2}\leq\cdots\uparrow t\}$
is a stochastic partition of the non-negative real numbers, $|P|=\max_{n}\left(\tau_{n}-\tau_{n-1}\right)$
is called the \emph{mesh} of $P$ and the limit is defined using convergence
in probability. Note that $[x]_{t}$ is non-decreasing with $t$ and
$[x,y]_{t}$ can be defined via polarization as
\[
[x,y]_{t}=\frac{1}{4}\left([x+y]_{t}-[x-y]_{t}\right).
\]
For example, if the processes $x_{t}$ and $y_{t}$ satisfy the SDEs
$dx_{t}=\mu(x_{t})dt+\sigma(x_{t})dW_{t}$ and $dy_{t}=\nu(y_{t})dt+\eta(y_{t})dW_{t}$
where $W_{t}$ is a Wiener process, we have that $[x]_{t}=\int_{0}^{t}\sigma^{2}(x_{s})ds$
and $[x,y]_{t}=\int_{0}^{t}\sigma(x_{s})\eta(y_{s})ds$ and $d[x,y]_{t}=\sigma(x_{t})\eta(y_{t})dt$;
for a vector-valued SDE $dx_{t}=\mu(x_{t})dt+\Sigma(x_{t})dW_{t}$
and $dy_{t}=\nu(y_{t})dt+M(y_{t})dW_{t}$, we have that $[x^{i},x^{j}]_{t}=\int_{0}^{t}(\Sigma(x_{s})\Sigma^{T}(x_{s}))_{ij}ds$
and $d[x^{i},y^{j}]_{t}=(\Sigma(x_{t})M^{T}(y_{t}))_{ij}dt$.
\begin{lem}[Itô's formula]
\label{lem:Ito} Let $x$ be a semimartingale and $f$ be a twice
continuously differentiable function, then
\[
df(x_{t})=\sum_{i}\frac{df(x_{t})}{dx^{i}}dx^{i}+\frac{1}{2}\sum_{i,j}\frac{d^{2}f(x_{t})}{dx^{i}dx^{j}}d[x^{i},x^{j}]_{t}.
\]
\end{lem}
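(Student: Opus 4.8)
The plan is the classical one: Taylor-expand $f$ along a sequence of partitions whose mesh shrinks to zero and identify the limit of each order of the expansion. Fix $t$, and let $P=\{0=\tau_0<\tau_1<\dots<\tau_N=t\}$ with $|P|\to0$. Writing $\Delta_n x\defeq x_{\tau_{n+1}}-x_{\tau_n}$ and telescoping, $f(x_t)-f(x_0)=\sum_n\big(f(x_{\tau_{n+1}})-f(x_{\tau_n})\big)$. Since $f\in C^2$, Taylor's theorem with second-order remainder gives, for each $n$,
\[
f(x_{\tau_{n+1}})-f(x_{\tau_n})=\sum_i\partial_if(x_{\tau_n})\,\Delta_n x^i+\tfrac12\sum_{i,j}\partial_{ij}f(x_{\tau_n})\,\Delta_n x^i\,\Delta_n x^j+r_n,
\]
where $|r_n|\le\tfrac12\,\rho\!\left(\norm{\Delta_n x}_2\right)\norm{\Delta_n x}_2^2$ and $\rho$ is a (local) modulus of continuity for the Hessian of $f$. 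So the whole proof reduces to taking $|P|\to0$ in the three resulting sums.

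Because $x$ is a continuous semimartingale (it solves an SDE), I would first localize: with $S_k\defeq\inf\{s:\norm{x_s}_2\ge k\text{ or }\sum_i[x^i]_s\ge k\}$ one has $S_k\uparrow\infty$ a.s., and it suffices to prove the identity on each interval $[0,t\wedge S_k]$ and let $k\to\infty$. On such an interval $x$ stays in a fixed compact set, so $f,\nabla f,\nabla^2f$ are bounded and uniformly continuous there, the relevant stochastic integrals exist, and $\max_n\norm{\Delta_n x}_2\to0$ by uniform continuity of the (continuous) paths. Then: (i) $\sum_n\sum_i\partial_if(x_{\tau_n})\Delta_n x^i\to\sum_i\int_0^t\partial_if(x_s)\,dx^i_s$ in probability — this is precisely the definition of the It\^o integral of the adapted continuous integrand $\partial_if(x_s)$, after splitting $x^i$ into its local-martingale and finite-variation parts. (ii) $\tfrac12\sum_n\sum_{i,j}\partial_{ij}f(x_{\tau_n})\Delta_n x^i\Delta_n x^j\to\tfrac12\sum_{i,j}\int_0^t\partial_{ij}f(x_s)\,d[x^i,x^j]_s$: I would prove $\sum_ng(x_{\tau_n})\Delta_n x^i\Delta_n x^j\to\int_0^tg(x_s)\,d[x^i,x^j]_s$ for continuous $g$ directly from the definition of $[x^i,x^j]$, approximating $g(x_s)$ by its values at partition points, where the error is bounded by the modulus of continuity of $s\mapsto g(x_s)$ times $\sum_n|\Delta_n x^i\,\Delta_n x^j|\le\tfrac12\sum_n\big((\Delta_n x^i)^2+(\Delta_n x^j)^2\big)$, which stays bounded; the case $i\ne j$ is handled by the polarization identity $[x^i,x^j]=\tfrac14([x^i+x^j]-[x^i-x^j])$ recalled in the preliminaries. (iii) The remainder satisfies $\sum_n|r_n|\le\tfrac12\,\rho\big(\max_n\norm{\Delta_n x}_2\big)\cdot\sum_n\norm{\Delta_n x}_2^2$, and since $\sum_n\norm{\Delta_n x}_2^2=\sum_i\sum_n(\Delta_n x^i)^2\to\sum_i[x^i]_t<\infty$ while $\rho\big(\max_n\norm{\Delta_n x}_2\big)\to0$, this tends to $0$.

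The main obstacle is the interplay of (ii) and (iii): one must show that the second-order Riemann sums do \emph{not} vanish — they converge to the genuinely new quadratic-variation term, which is the entire point of the formula — while the Taylor remainder, which at first glance is also of order $\sum_n\norm{\Delta_n x}_2^2$, does go to zero; the separation comes precisely from the extra modulus-of-continuity factor in $r_n$. Making the three convergences hold in probability simultaneously along a single sequence of partitions is why the localization (boundedness of $f$ and its derivatives on the range of the stopped process, uniform path continuity, finiteness of $\sum_i[x^i]_{t\wedge S_k}$) is essential; once those are in place, the rest is bookkeeping against the definitions of $\int\cdot\,dx^i$ and $[x^i,x^j]$ given in this section.
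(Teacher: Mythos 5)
The paper states It\^o's formula as standard background in the preliminaries and gives no proof, so there is no argument of the paper's to compare against; I can only assess your sketch on its own terms. Your plan is the classical Taylor-expansion proof, and it is correct in outline: telescoping $f(x_t)-f(x_0)$ over a partition, second-order Taylor with remainder controlled by a modulus of continuity of $\nabla^2 f$, localization by stopping times $S_k$ so that the path stays in a compact set and $\sum_i [x^i]$ is bounded, convergence of the first-order sums to the It\^o integral by definition, convergence of the second-order sums to the Stieltjes integral against $[x^i,x^j]$, and vanishing of the remainder because $\rho(\max_n\|\Delta_n x\|_2)\to 0$ while $\sum_n\|\Delta_n x\|_2^2$ stays bounded. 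You correctly identify the crux, namely that the remainder and the quadratic term are both of size $\sum_n\|\Delta_n x\|_2^2$ and are separated only by the extra modulus-of-continuity factor. Your implicit restriction to continuous semimartingales is justified in context (the paper only considers solutions of Brownian-driven SDEs), though as literally stated the lemma would require a jump term for a general semimartingale.

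The one place where your sketch glosses over a genuine technicality is step (ii). Showing
\[
\sum_n g(x_{\tau_n})\,\Delta_n x^i\,\Delta_n x^j\;\longrightarrow\;\int_0^t g(x_s)\,d[x^i,x^j]_s
\]
is \emph{not} immediate ``from the definition'' of $[x^i,x^j]$: the definition gives convergence of the \emph{unweighted} sums $\sum_n\Delta_n x^i\Delta_n x^j$, and inserting the weights $g(x_{\tau_n})$ requires an actual argument. The standard fix (once localized so that $g(x_s)$ is bounded and uniformly continuous on $[0,t\wedge S_k]$) is a two-scale comparison: first replace $g(x_{\tau_n})$ by a coarser step approximation $g(x_{\sigma_m})$ on a fixed coarse grid $\{\sigma_m\}$ refining to zero only after the fine mesh does, with error controlled by the modulus of continuity of $s\mapsto g(x_s)$ times $\sum_n|\Delta_n x^i\Delta_n x^j|$ (bounded by your AM--GM estimate and the finiteness of $\sum_i[x^i]_{t\wedge S_k}$); then, on each coarse block, apply the definition of $[x^i,x^j]$ to the unweighted sum, which is now legitimate since the weight is constant there. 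Alternatively one uses the martingale decomposition of $\sum_n\bigl((\Delta_n x^i)(\Delta_n x^j)-([x^i,x^j]_{\tau_{n+1}}-[x^i,x^j]_{\tau_n})\bigr)$ and a second-moment bound. Either way, this is the step a referee would want spelled out; the rest of your outline is complete.
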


The next two lemmas are well-known facts about Wiener processes; first
the reflection principle.
\begin{lem}[Reflection principle]
\label{lem:reflection}Given a Wiener process $W(t)$ and $a,t\geq0$,
then we have that
\[
\P(\sup_{0\leq s\leq t}W(s)\geq a)=2\P(W(t)\geq a).
\]
\end{lem}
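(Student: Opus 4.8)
The plan is to run the classical argument built from the strong Markov property of Brownian motion together with its reflection symmetry $W \stackrel{d}{=} -W$. We may assume $a>0$; the case $a=0$ is trivial since both sides equal $1$. Introduce the first passage time $\tau=\inf\{s\geq 0: W(s)=a\}$. Because $W$ has continuous paths and $W(0)=0<a$, the intermediate value theorem gives that the event $\{\sup_{0\leq s\leq t}W(s)\geq a\}$ coincides exactly with $\{\tau\leq t\}$, and likewise $\{W(t)\geq a\}\subseteq\{\tau\leq t\}$ and $W(\tau)=a$ on $\{\tau<\infty\}$. Hence it is enough to prove $\P(W(t)\geq a)=\tfrac12\,\P(\tau\leq t)$, because then $\P(\sup_{0\leq s\leq t}W(s)\geq a)=\P(\tau\leq t)=2\,\P(W(t)\geq a)$, which is the claim.

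Next I would write $\P(W(t)\geq a)=\P(W(t)\geq a,\ \tau\leq t)$ and condition on $\mathcal{F}_\tau$. Since $\tau$ is the hitting time of the closed set $\{a\}$ it is a stopping time, and by the strong Markov property the shifted process $\widetilde W(s):=W(\tau+s)-a$, defined on $\{\tau<\infty\}$, is a standard Brownian motion independent of $\mathcal{F}_\tau$. On $\{\tau\leq t\}$ we have $W(t)-a=\widetilde W(t-\tau)$, where $t-\tau\geq 0$ is $\mathcal{F}_\tau$-measurable. Using $\widetilde W\stackrel{d}{=}-\widetilde W$ jointly with its independence of $\mathcal{F}_\tau$, conditionally on $\mathcal{F}_\tau$ and on the event $\{\tau<t\}$ the variable $\widetilde W(t-\tau)$ is a centered Gaussian with positive variance, so $\P(\widetilde W(t-\tau)\geq 0\mid \mathcal{F}_\tau)=\tfrac12$ there. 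The boundary event $\{\tau=t\}$ is null, so integrating over $\{\tau\leq t\}$ yields $\P(W(t)\geq a,\ \tau\leq t)=\tfrac12\,\P(\tau\leq t)$, which is what we needed. (One can equivalently phrase this as reflecting the path across the level $a$ after time $\tau$, setting up a measure-preserving correspondence between paths ending above $a$ and paths that have crossed $a$ but end below it; the justification of that correspondence is exactly the strong Markov step above.)

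The only delicate point — hence the main obstacle — is the rigorous invocation of the strong Markov property at $\tau$: one must work with the right-continuous, completed natural filtration so that $\tau$ is genuinely a stopping time satisfying the usual conditions, verify that the post-$\tau$ process is independent of $\mathcal{F}_\tau$ with Wiener law on $\{\tau<\infty\}$, and check that the symmetrization of $\widetilde W$ can be combined with the $\mathcal{F}_\tau$-measurable evaluation time $t-\tau$. Everything else is routine: path continuity to identify $\{\sup\geq a\}$ with $\{\tau\leq t\}$ and to get $W(\tau)=a$, discarding the null event $\{\tau=t\}$, and the elementary identity $\P(Z\geq 0)=\tfrac12$ for a non-degenerate centered Gaussian $Z$.
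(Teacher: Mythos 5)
Your proof is correct. The paper states this as a well-known preliminary fact about Wiener processes and gives no proof, so there is nothing to compare against; the argument you supply is exactly the classical one, reducing $\{\sup_{0\le s\le t}W(s)\ge a\}$ to $\{\tau\le t\}$ via path continuity, conditioning on $\mathcal{F}_\tau$, and invoking the strong Markov property together with the sign symmetry of the post-$\tau$ Brownian motion to conclude $\P(W(t)\ge a,\,\tau\le t)=\tfrac12\P(\tau\le t)$. The only minor point worth noting is the justification that $\{\tau=t\}$ is null: the cleanest route, which avoids any circularity, is to observe that $\tau=t$ forces $W(t)=a$ and $\P(W(t)=a)=0$ since $W(t)$ is a continuous random variable; you treat it correctly as a negligible boundary event. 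The handling of the strong Markov step — using the right-continuous completed filtration, $\mathcal{F}_\tau$-measurability of $t-\tau$, and the fact that $\widetilde W(t-\tau)$ is conditionally centered Gaussian of positive variance on $\{\tau<t\}$ — is precisely where the rigor lives, and you have flagged and handled it appropriately.
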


Second, a decomposition lemma for continuous martingales.
\begin{thm}[Dambis, Dubins-Schwarz theorem]
\label{thm:Dubins}Every continuous local martingale $M_{t}$ is
of the form
\[
M_{t}=M_{0}+W_{[M]_{t}}\text{ for all }t\geq0
\]
where $W_{s}$ is a Wiener process. 
\end{thm}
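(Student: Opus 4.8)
The strategy is to produce the Wiener process $W$ explicitly as a \emph{time change} of $M$, with the quadratic variation $[M]_{t}$ playing the role of the clock, and then to recognize the time-changed process as Brownian motion via L\'evy's characterization (which, if one prefers not to cite it, I can derive on the spot from It\^o's formula recorded above). Replacing $M_{t}$ by $M_{t}-M_{0}$ I assume throughout that $M_{0}=0$. The one preliminary fact I would record first is that a continuous local martingale $M$ is a.s.\ constant on every interval on which $t\mapsto[M]_{t}$ is constant: since $M^{2}-[M]$ is a continuous local martingale, localizing by stopping times $T_{k}\uparrow\infty$ and applying optional stopping gives $\E\big[(M_{b\wedge T_{k}}-M_{a\wedge T_{k}})^{2}\,\big|\,\mathcal F_{a}\big]=\E\big[[M]_{b\wedge T_{k}}-[M]_{a\wedge T_{k}}\,\big|\,\mathcal F_{a}\big]$ for $a<b$, and letting $k\to\infty$ with $[M]_{a}=[M]_{b}$ on the flat interval forces $M_{b}=M_{a}$ a.s.; a localization argument then makes $M$ a.s.\ constant on every maximal flat stretch of $[M]$, which is exactly what makes a time change by $[M]$ well defined.

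Next I would introduce the right-continuous inverse clock $\tau_{s}\defeq\inf\{t\ge0:[M]_{t}>s\}$. Because $[M]$ is continuous, nondecreasing and adapted, each $\tau_{s}$ is a stopping time, $s\mapsto\tau_{s}$ is nondecreasing and right-continuous, the filtration $\mathcal G_{s}\defeq\mathcal F_{\tau_{s}}$ inherits the usual conditions, and $[M]_{\tau_{s}}=s$ whenever $\tau_{s}<\infty$. Set $W_{s}\defeq M_{\tau_{s}}$ for $s<[M]_{\infty}$; when $[M]_{\infty}=\infty$ a.s.\ this defines $W$ on all of $[0,\infty)$, and otherwise one extends $W$ past time $[M]_{\infty}$ by an independent Wiener process on an enlarged probability space. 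By the constancy fact above $s\mapsto W_{s}$ is a.s.\ continuous (a jump of $\tau$ corresponds to a flat stretch of $[M]$ over which $M$ does not move) and $W$ is $(\mathcal G_{s})$-adapted. Optional stopping applied to $M$ along $(\tau_{s})$, together with a localizing sequence, shows that $W$ is a continuous $(\mathcal G_{s})$-local martingale and that $W_{s}^{2}-[M]_{\tau_{s}}=W_{s}^{2}-s$ is a local martingale, i.e.\ $[W]_{s}=s$.

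To finish, I would invoke L\'evy's characterization: a continuous local martingale $W$ with $W_{0}=0$ and $[W]_{s}=s$ is a standard Wiener process. If this is not to be cited, apply It\^o's formula to $f(w)=e^{i\theta w}$ along $W$ to get $e^{i\theta W_{s}}=1+i\theta\int_{0}^{s}e^{i\theta W_{u}}\,dW_{u}-\tfrac{\theta^{2}}{2}\int_{0}^{s}e^{i\theta W_{u}}\,du$; for $r<s$ and $A\in\mathcal G_{r}$, the expectation of $\mathbf{1}_{A}(e^{i\theta W_{s}}-e^{i\theta W_{r}})$ kills the stochastic integral, so $\phi(s)\defeq\E\big[\mathbf{1}_{A}e^{i\theta(W_{s}-W_{r})}\big]$ satisfies $\phi(s)=\P(A)-\tfrac{\theta^{2}}{2}\int_{r}^{s}\phi(u)\,du$ and hence $\phi(s)=\P(A)e^{-\theta^{2}(s-r)/2}$; this shows $W_{s}-W_{r}\sim N(0,s-r)$ independently of $\mathcal G_{r}$, so $W$ is a Wiener process. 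Evaluating at the clock reading $[M]_{t}$, and using that $\tau_{[M]_{t}}\ge t$ with $[M]$ constant on $[t,\tau_{[M]_{t}}]$, the constancy fact gives $W_{[M]_{t}}=M_{\tau_{[M]_{t}}}=M_{t}$, which is the claim.

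\textbf{Main obstacle.} The delicate part is the bookkeeping forced by the non-strict monotonicity of $[M]$: one must verify that $W_{s}=M_{\tau_{s}}$ does not depend on which time in a flat stretch of $[M]$ is used, that the time-changed paths are genuinely continuous rather than merely right-continuous with left limits, and---most importantly---that every martingale identity survives the random time change, which is precisely where optional stopping at the stopping times $\tau_{s}$ and the continuity of $[M]$ are used. A secondary technical point is integrability: $M$ is only a \emph{local} martingale, so each martingale-property statement must be obtained along a localizing sequence and then passed to the limit; and handling $\{[M]_{\infty}<\infty\}$ needs the extension of $W$ on an enlarged space.
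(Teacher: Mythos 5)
This is the classical Dambis--Dubins--Schwarz theorem, which the paper states as a known fact from stochastic calculus and does not prove (it is explicitly introduced as a ``well-known'' decomposition result, alongside the reflection principle). Your argument is the standard time-change proof: constancy of $M$ on the flat stretches of $[M]_t$, the inverse clock $\tau_s$, verification that $W_s=M_{\tau_s}$ is a continuous local martingale with $[W]_s=s$ via optional stopping along a localizing sequence, L\'evy's characterization, and the enlargement of the probability space on $\{[M]_\infty<\infty\}$. It is correct, and you have flagged exactly the points (well-definedness across flat stretches, continuity of the time-changed paths, survival of the martingale property under the random time change, and the finite-variation case) where the care is actually needed.
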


\subsection{Logconcave functions}
\begin{lem}[Dinghas; Prékopa; Leindler]
\label{lem:marginal} The convolution of two logconcave functions
is also logconcave; in particular, any linear transformation or any
marginal of a logconcave density is logconcave.
\end{lem}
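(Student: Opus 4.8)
The plan is to reduce all three assertions to one classical tool, the Pr\'ekopa--Leindler inequality, which I would state and prove first: if $f,g,h\colon\R^{n}\to[0,\infty)$ are measurable and some $\lambda\in(0,1)$ satisfies
\[
h(\lambda x+(1-\lambda)y)\ \ge\ f(x)^{\lambda}\,g(y)^{1-\lambda}\qquad\text{for all }x,y\in\R^{n},
\]
then $\int_{\R^{n}}h\ \ge\ \left(\int_{\R^{n}}f\right)^{\lambda}\left(\int_{\R^{n}}g\right)^{1-\lambda}$.

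To prove the inequality I would induct on $n$. For $n=1$, after discarding the cases $\int f=0$ or $\int g=0$ and rescaling $f,g$ (and $h$ accordingly, by homogeneity) so that $\sup f=\sup g=1$, the hypothesis gives for each $t\in(0,1)$ the inclusion of nonempty superlevel sets $\lambda\{f>t\}+(1-\lambda)\{g>t\}\subseteq\{h>t\}$. The one-dimensional Brunn--Minkowski estimate $|A+B|\ge|A|+|B|$ --- seen at once by translating $A$ so that $\sup A=0$ and $B$ so that $\inf B=0$, whence $A\cup B\subseteq A+B$ with $A\cap B=\{0\}$ --- then yields $|\{h>t\}|\ge\lambda|\{f>t\}|+(1-\lambda)|\{g>t\}|$; integrating over $t$ by the layer-cake formula and applying the weighted AM--GM inequality gives $\int h\ge\lambda\int f+(1-\lambda)\int g\ge(\int f)^{\lambda}(\int g)^{1-\lambda}$, and undoing the rescaling finishes the base case. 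For the inductive step I would slice along the last coordinate: writing $x=(x',s)$ and $c=\lambda a+(1-\lambda)b$, the slices $f(\cdot,a),\,g(\cdot,b),\,h(\cdot,c)$ satisfy the $(n-1)$-dimensional hypothesis, so by induction $F(a)=\int f(\cdot,a)$, $G(b)=\int g(\cdot,b)$, $H(c)=\int h(\cdot,c)$ obey $H(\lambda a+(1-\lambda)b)\ge F(a)^{\lambda}G(b)^{1-\lambda}$; the $n=1$ case applied to $F,G,H$ together with Fubini gives the claim.

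Granting Pr\'ekopa--Leindler, the lemma follows quickly. For marginals: if $p$ is logconcave on $\R^{n}\times\R^{m}$ and $q(x)=\int_{\R^{m}}p(x,y)\,dy$, then for fixed $x_{1},x_{2}$ and $\lambda$ the three functions $y\mapsto p(x_{1},y)$, $y\mapsto p(x_{2},y)$, $y\mapsto p(\lambda x_{1}+(1-\lambda)x_{2},y)$ satisfy the Pr\'ekopa--Leindler hypothesis on $\R^{m}$ (directly from logconcavity of $p$ on the product space), so $q(\lambda x_{1}+(1-\lambda)x_{2})\ge q(x_{1})^{\lambda}q(x_{2})^{1-\lambda}$, i.e.\ $q$ is logconcave. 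For convolutions: $(x,y)\mapsto f(x-y)\,g(y)$ is a product of two logconcave functions precomposed with linear maps, hence logconcave on $\R^{n}\times\R^{n}$, and $(f*g)(x)=\int f(x-y)g(y)\,dy$ is one of its marginals. For a linear image $TX$ of a random vector $X$ with logconcave density $p$: factor $T$ as a linear bijection followed by a coordinate projection; the bijection changes $p$ only by a constant factor and a precomposition with a linear map, both of which preserve logconcavity, while the projection is exactly the marginal statement (the image of an injective map being handled by transporting the density along the isomorphism onto its range).

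I do not expect a genuine obstacle here --- this is all classical --- but the one point that needs care is the measure-theoretic bookkeeping in the one-dimensional step: measurability of the superlevel sets and the passage from compact to general measurable $A,B$ in $|A+B|\ge|A|+|B|$ (handled by inner regularity), together with the boundedness used in the normalization $\sup f=\sup g=1$. In our setting this is harmless, since Pr\'ekopa--Leindler is only ever applied to logconcave functions, which are continuous on the interior of their support and have convex (hence measurable) superlevel sets, and one may truncate to compact pieces and pass to the limit; alternatively one can prove the one-dimensional case by a monotone transport argument that sidesteps these issues entirely. Everything else is routine.
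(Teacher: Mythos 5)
The paper states this lemma as a classical fact (attributed in its name to Dinghas, Pr\'ekopa, and Leindler) and gives no proof, so there is no in-paper argument to compare against. Your derivation is the standard one: prove Pr\'ekopa--Leindler by induction on dimension with the one-dimensional Brunn--Minkowski bound $|A+B|\ge|A|+|B|$ as the base case, deduce closure of logconcavity under marginals, and then obtain convolution and linear-image stability as corollaries (convolution as a marginal of $(x,y)\mapsto f(x-y)g(y)$, a linear image by factoring through a bijection and a coordinate projection). This is correct and is exactly the route the attribution suggests. Your parenthetical caveats are the right ones: in the $n=1$ step the containment $A\cup B\subseteq A+B$ after normalizing $\sup A=0=\inf B$ requires the sup and inf to be attained, so one works first with compact $A,B$ and passes to general measurable sets by inner regularity; and the normalization $\sup f=\sup g=1$ requires $f,g$ bounded, which again is handled by truncation. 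In the present context all functions are logconcave densities, so these issues disappear, as you note. No gaps.
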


The next lemma about logconcave densities is folklore, see e.g., \cite{Lovasz2007}.
\begin{lem}[Logconcave moments]
\label{lem:lcmom} Given a logconcave density $p$ in $\R^{n}$,
and any $k\geq1$, 
\[
\E_{x\sim p}\norm x^{k}\le(2k)^{k}\left(\E_{x\sim p}\norm x^{2}\right)^{k/2}.
\]
\end{lem}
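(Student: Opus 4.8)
The plan is to reduce the $n$-dimensional statement to a one-dimensional one by pushing forward along the radial direction, and then prove the one-dimensional bound by a direct calculation with the Laplace/Gamma-type integrals that logconcavity controls. First I would set $Y = \norm{X}$ where $X \sim p$, and observe that it suffices to bound $\E Y^k$ in terms of $\E Y^2$; normalizing, assume $\E Y^2 = 1$, so the claim becomes $\E Y^k \le (2k)^k$. The density of $Y$ on $[0,\infty)$ is $g(r) = r^{n-1} p(r\omega)$ integrated over the sphere — this is \emph{not} logconcave in general because of the $r^{n-1}$ factor, so the cleaner route is to work directly with tail bounds rather than with a one-dimensional logconcave reduction.

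The key step I would use is the standard tail estimate for norms of logconcave vectors: if $m = (\E \norm X^2)^{1/2}$, then $\P(\norm X \ge t\, m) \le e^{-t+1}$ for all $t \ge 1$ (this follows from logconcavity of $p$ together with the fact that for a logconcave density, the function $t \mapsto \P(\norm X \ge t)$ decays at least exponentially past its ``scale''; one derives it by a one-dimensional localization/Markov argument, or by applying Borell's lemma to the convex body $\{\norm x \le m\}$, which captures at least half the measure). Granting this, I would compute
\[
\E \norm X^k = \int_0^\infty k t^{k-1} \P(\norm X \ge t)\, dt
\le m^k \int_0^\infty k s^{k-1} \min\{1, e^{-s+1}\}\, ds
\le m^k \left(1 + e \int_1^\infty k s^{k-1} e^{-s}\, ds\right),
\]
and the remaining integral is $e \cdot k \cdot \Gamma(k, 1) \le e\, k!\, \le e\, k^k$, so the whole bracket is at most $C k^k$, which is comfortably below $(2k)^k$. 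So the estimate follows with room to spare, which is why the folklore constant $(2k)^k$ is stated loosely.

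The main obstacle — really the only nontrivial ingredient — is establishing the exponential tail bound $\P(\norm X \ge t\,m) \le e^{-t+1}$ from logconcavity alone, without assuming isotropy. The cleanest argument I know is to apply Borell's lemma: the set $K = \{x : \norm x \le \lambda m\}$ has $p$-measure at least $1/2$ for a suitable absolute $\lambda$ (by Markov's inequality $\P(\norm X \ge \sqrt 2\, m) \le 1/2$, so take $\lambda = \sqrt 2$), and Borell's lemma then gives $\P(x \notin sK) \le (1/2)\big(\tfrac{s+1}{s-1}\big)^{-?}$-type decay — more precisely $\P(\norm X \ge s\lambda m) \le \big(\tfrac{1 - 1/2}{1/2}\big) \cdot$ a geometrically decaying factor, yielding the claimed exponential rate after adjusting constants. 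Alternatively one can cite the one-dimensional fact that the marginal of $p$ in the direction of a fixed unit vector is logconcave (Lemma \ref{lem:marginal}) and run the argument coordinatewise, but the radial version via Borell is tightest. Either way this is classical, and once it is in hand the moment bound is just the displayed integral computation above.
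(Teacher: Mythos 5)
Your approach is correct and is essentially the standard folklore argument; the paper gives no proof of this lemma, deferring to \cite{Lovasz2007}, where the key tail estimate you invoke appears (for general, not necessarily isotropic or centered, logconcave $p$, and in the stronger form with $\E\norm X$ in place of $(\E\norm X^{2})^{1/2}$, from which your version follows by Cauchy--Schwarz). Three small remarks on the write-up. First, your crude integral estimate $\E\norm X^{k}\le m^{k}(1+ek!)$ does not close the case $k=1$ against $(2\cdot1)^{1}=2$, since $1+e>2$; but $k=1$ (and $k=2$) is immediate from Cauchy--Schwarz, so this is a cosmetic gap that should simply be noted. Second, the Markov-plus-Borell route yields an exponential tail rate noticeably smaller than $1$ (roughly $\tfrac14$), not the $e^{-t+1}$ you state; that weaker rate is still comfortably enough for $(2k)^{k}$ once you use Stirling rather than $k!\le k^{k}$, but to get the clean constant $e^{-t+1}$ one should cite the Lov\'asz--Vempala lemma directly rather than re-derive it from Borell, as Markov at level $\sqrt2 m$ only gives measure $\ge\frac12$, whereas Borell requires strictly more than $\frac12$ (fixable by taking level $2m$, as you note by ``adjusting constants''). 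Third, the parenthetical reason you give for why the radial density of $\norm X$ fails to be logconcave --- ``because of the $r^{n-1}$ factor'' --- has it backwards: $r^{n-1}$ is itself logconcave on $(0,\infty)$ and only helps; the real obstruction is that $\int_{S^{n-1}}p(r\omega)\,d\omega$ is an average of logconcave functions of $r$, and such mixtures need not be logconcave. None of these affect the soundness of the argument, since you correctly avoid the radial reduction and go through tails.
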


The following elementary concentration lemma is also well-known (this
version is from \cite{Lovasz2007}).
\begin{lem}[Logconcave concentration]
\label{lem:log-conc} For any isotropic logconcave density $p$ in
$\R^{n},$ and any $t>0,$
\[
\P_{x\sim p}\left(\norm x>t\sqrt{n}\right)\le e^{-t+1}.
\]
\end{lem}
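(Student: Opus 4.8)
Since the right-hand side is at least $1$ when $t\le 1$, the inequality is trivial there; assume $t>1$. As $p$ is a density, isotropy reads $\int_{\R^n}\|x\|^2 p(x)\,dx=n$, and the claim is $\int_{\|x\|\ge t\sqrt n}p(x)\,dx\le e^{1-t}$: a comparison of integrals of a logconcave function subject to one moment constraint, precisely the setting the localization method of \cite{LS93,KLS95} handles. The plan is: were the inequality to fail for some isotropic logconcave $p$, then applying the localization lemma to the two functions $x\mapsto(n-\|x\|^2)p(x)$ and $x\mapsto(\mathbf{1}_{\{\|x\|\ge t\sqrt n\}}-e^{1-t})p(x)$ (whose integrals are respectively nonnegative and positive) would yield a \emph{needle} on which both inequalities persist. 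On that needle the integrals become one-dimensional integrals weighted by $\gamma(s)^{n-1}$ for an affine $\gamma$ (the Jacobian of the cone swept around the segment) against a logconcave restriction of $p$; folding the weight into the function, the statement collapses to the following one-dimensional assertion: for a logconcave function $w$ on an interval $I\subseteq\R$ and a convex quadratic $\omega$ with $\int_I\omega\,w\le n\int_I w$, one has $\int_{\{\omega\ge t^{2}n\}\cap I}w\le e^{1-t}\int_I w$ (and $\{\omega\ge t^{2}n\}$ is the complement of an interval, as $\omega$ is convex).

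Proving that one-dimensional statement is the real work. I would attack it with a second extremal/perturbation argument — localization is again available in one dimension — showing that among logconcave $w$ on an interval meeting the quadratic moment bound, the ratio $\int_{\{\omega\ge t^{2}n\}}w\big/\int_I w$ is maximized by a (possibly truncated) exponential $w(s)\propto e^{cs}$, for which both the moment constraint and the tail ratio are explicit; a direct calculation then yields the bound $e^{1-t}$, and a truncated exponential in the extremal configuration shows the exponent $1-t$ is what this route naturally produces (it is not claimed to be tight for any fixed body).

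I expect the main obstacle to be bookkeeping rather than a single hard estimate: tracking where the origin sits relative to the needle, so that $\omega$ is a genuine convex quadratic rather than simply $s^{2}$, and then checking that the exponential configuration is indeed extremal over all such $\omega$. A quicker but lossier route, bypassing localization, is Markov's inequality with the logconcave moment bound (Lemma~\ref{lem:lcmom}): for any positive integer $k$,
\[
\P_{x\sim p}\!\left(\|x\|>t\sqrt n\right)=\P\!\left(\|x\|^{k}>t^{k}n^{k/2}\right)\le\frac{(2k)^{k}n^{k/2}}{t^{k}n^{k/2}}=\left(\frac{2k}{t}\right)^{k},
\]
and $k=\lfloor t/(2e)\rfloor$ gives $\P_{x\sim p}(\|x\|>t\sqrt n)\le e^{1-t/(2e)}$; improving the constant in the exponent from $t/(2e)$ up to $t$ is exactly what the localization argument buys.
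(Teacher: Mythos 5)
The paper does not prove this lemma; it is stated as known and attributed to \cite{Lovasz2007}, so there is no in-paper proof to compare against. Assessing your proposal on its own terms: the Markov/moment route is computed correctly, but as you note it only yields $\P(\|x\|>t\sqrt n)\le e^{1-t/(2e)}$, which is strictly weaker than the stated $e^{1-t}$ and therefore does not prove the lemma. The localization route is a legitimate strategy --- applying the Lov\'asz--Simonovits bisection lemma to $F(x)=(n-\|x\|^2)p(x)$ and $G(x)=(\mathbf{1}_{\{\|x\|\ge t\sqrt n\}}-e^{1-t})p(x)$ does reduce the problem to a one-dimensional inequality for a logconcave weight $w$ (the product $\ell(s)^{n-1}p(\cdot)$ along the needle) and a nonnegative convex quadratic $\omega$ with $\int\omega w\le n\int w$.

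The gap is that the one-dimensional extremal analysis --- which you yourself call ``the real work'' --- is not carried out, and it is exactly where the factor $e^{1-t}$ has to come from. You would need to (i) justify that the extremizers of $\int_{\{\omega\ge t^2n\}}w/\int w$ subject to the single linear constraint $\int\omega w\le n\int w$ are (possibly truncated) exponentials, which is standard but must be stated carefully since the constraint set also involves the free parameters $(a,b,\ell,\omega)$ coming from the needle; and (ii) actually optimize over the quadratic $\omega=(\text{affine})^2$, the interval, and the exponential rate to verify that the supremum of the tail ratio is exactly $e^{1-t}$. Without step (ii) the proposal establishes nothing beyond the $e^{1-t/(2e)}$ bound. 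For reference, the argument in \cite{Lovasz2007} avoids full localization: one proves a one-dimensional tail bound $\P(Z>\beta\,\E Z)\le e^{1-\beta}$ for nonnegative one-dimensional logconcave densities directly, and then applies it in $\R^n$ using that for each direction $\theta$ the ray density $r\mapsto r^{n-1}p(r\theta)$ is one-dimensional logconcave, together with $\E\|X\|\le\sqrt{\E\|X\|^2}=\sqrt n$; this is shorter and avoids the free parameters your reduction introduces.
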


To prove a lower bound on the expansion, it suffices to consider subsets
of measure $1/2.$ This follows from the concavity of the isoperimetric
profile. We quote a theorem from \cite[Theorem 1.8]{Milman2009},
which applies even more generally to Riemannian manifolds under suitable
convexity-type assumptions.
\begin{thm}
\label{thm:milman}The Cheeger constant of any logconcave density
is achieved by a subset of measure $1/2.$
\end{thm}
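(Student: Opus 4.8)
The plan is to reduce the statement to the concavity of the isoperimetric profile of $p$. After normalizing $p$ to a probability density (the ratio defining the Cheeger constant is invariant under scaling $p$), write $\mu$ for the associated measure and define the isoperimetric profile $I:[0,1]\to[0,\infty)$ by
\[
I(v)=\inf\left\{ \int_{\partial S}p\ :\ \mu(S)=v\right\} ,
\]
so that $1/\psi_{p}=\inf_{v\in(0,1)}I(v)/\min(v,1-v)$. Since the boundary measure $\int_{\partial S}p$ is unchanged under $S\leftrightarrow\R^{n}\setminus S$, the profile $I$ is symmetric about $\tfrac12$, and it suffices to prove $I(v)\ge 2v\,I(1/2)$ for every $v\in(0,1/2]$: combined with the symmetry this gives $I(v)/\min(v,1-v)\ge 2I(1/2)=I(1/2)/\tfrac12$ for all $v$, so the infimum is attained at $v=1/2$, and (granting that an isoperimetric region of measure $\tfrac12$ exists) it is achieved by a subset of that measure.

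The crux is to show that $I$ is concave on $[0,1]$, and this is where logconcavity is used. Writing $p=e^{-V}$ with $V$ convex, the weighted manifold $(\R^{n},|\cdot|,\mu)$ satisfies the Bakry--Émery curvature--dimension condition $CD(0,\infty)$, since its generalized Ricci tensor equals $\mathrm{Hess}\,V\succeq0$. For such spaces I would use the standard facts that (i) an isoperimetric region of every volume $v\in(0,1)$ exists — this needs an approximation/compactness argument because $\mathrm{supp}\,\mu$ may be unbounded — and (ii) a minimizer $S$ has smooth reduced boundary, away from a negligible singular set, of constant weighted mean curvature. Substituting such a minimizer into the second variation formula for the weighted perimeter and using $\mathrm{Hess}\,V\succeq0$ yields a differential inequality forcing $I''\le0$ (in the viscosity sense) on $(0,1)$; continuity up to the endpoints then gives concavity of $I$ on $[0,1]$ with $I(0)=I(1)=0$. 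This is the content of \cite[Thm 1.8]{Milman2009} (building on Bakry--Ledoux, Bayle, Morgan--Johnson, and Sternberg--Zumbrun), which I would either cite directly or reprove along these lines.

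Granting concavity, the rest is a one-line convexity fact: if $g$ is concave on $[0,1]$ with $g(0)=0$, then $v\mapsto g(v)/v$ is non-increasing, so $I(v)/v\ge I(1/2)/\tfrac12=2I(1/2)$ for $v\in(0,1/2]$, which is exactly the inequality needed in the first paragraph. I expect the genuine difficulty to lie entirely in establishing concavity of $I$ — in particular in the existence and regularity theory for weighted isoperimetric minimizers on a possibly non-compact and non-smooth support, which is what makes the second-variation computation rigorous; the reduction to measure-$\tfrac12$ sets and the deduction from concavity are both elementary.
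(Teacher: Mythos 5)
Your proposal follows essentially the same route as the paper: the paper simply quotes this as \cite[Thm.~1.8]{Milman2009} with the one-line remark that it follows from concavity of the isoperimetric profile, and your argument is exactly the elaboration of that remark (reduce to $I(v)\ge 2v\,I(1/2)$ via symmetry of $I$ and the elementary fact that a concave function vanishing at $0$ has $v\mapsto I(v)/v$ non-increasing, then invoke Milman for concavity). Your careful flag that ``achieved'' requires existence of a weighted isoperimetric minimizer is a genuine subtlety glossed over by the paper's statement, but for the paper's use (bounding expansion from below) only the inequality is needed, so this does not affect anything downstream.
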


\section{Eldan's stochastic localization}

In this section, we consider a variant of the stochastic localization
scheme introduced in \cite{Eldan2013}. In discrete localization,
the idea would be to restrict the distribution with a random halfspace
and repeat this process. In stochastic localization, this discrete
step is replaced by infinitesimal steps, each of which is a renormalization
with a linear function in a random direction. One might view this
informally as an averaging over infinitesimal needles. The discrete
time equivalent would be $p_{t+1}(x)=p_{t}(x)(1+\sqrt{h}(x-\mu_{t})^{T}w)$
for a sufficiently small $h$ and random Gaussian vector $w$. Using
the approximation $1+y\sim e^{y-\frac{1}{2}y^{2}}$, we see that over
time this process introduces a negative quadratic factor in the exponent,
which will be the Gaussian factor. As time tends to $\infty$, the
distribution tends to a more and more concentrated Gaussian and eventually
a delta function, at which point any subset has measure either $0$
or 1. The idea of the proof is to stop at a time that is large enough
to have a strong Gaussian factor in the density, but small enough
to ensure that the measure of a set is not changed by more than a
constant. 

\subsection{The process and its basic properties}

Given a distribution with logconcave density $p(x)$, we start at
time $t=0$ with this distribution and at each time $t>0$, we apply
an infinitesimal change to the density. This is done by picking a
random direction from a standard Gaussian.

In order to construct the stochastic process, we assume that the support
of $p$ is contained in a ball of radius $R>n$. There is only exponentially
small probability outside this ball, at most $e^{-cR}$ by Lemma \ref{lem:log-conc}.
Moreover, since by Theorem \ref{thm:milman}, we only need to consider
subsets of measure $1/2,$ this truncation does not affect the KLS
constant of the distribution. 
\begin{defn}
\label{def:A}Given a logconcave distribution $p$, we define the
following stochastic differential equation:
\begin{equation}
c_{0}=0,\quad dc_{t}=dW_{t}+\mu_{t}dt,\label{eq:dBt}
\end{equation}
where the probability distribution $p_{t}$, the mean $\mu_{t}$ and
the covariance $A_{t}$ are defined by
\[
p_{t}(x)=\frac{e^{c_{t}^{T}x-\frac{t}{2}\norm x_{2}^{2}}p(x)}{\int_{\Rn}e^{c_{t}^{T}y-\frac{t}{2}\norm y_{2}^{2}}p(y)dy},\quad\mu_{t}=\E_{x\sim p_{t}}x,\quad A_{t}=\E_{x\sim p_{t}}(x-\mu_{t})(x-\mu_{t})^{T}.
\]
\end{defn}

Since $\mu_{t}$ is a bounded function that is Lipschitz with respect
to $c$ and $t$, standard theorems (e.g. \cite[Sec 5.2]{oksendal2013stochastic})
show the existence and uniqueness of the solution in time $[0,T]$
for any $T>0$. We defer all proofs for statements in this section,
considered standard in stochastic calculus, to Section \ref{sec:Localization-proofs}.
Now we proceed to analyzing the process and how its parameters evolve.
Roughly speaking, the first lemma below says that the stochastic process
is the same as continuously multiplying $p_{t}(x)$ by a random infinitesimally
small linear function. 
\begin{lem}
\label{lem:def-pt}We have that $dp_{t}(x)=(x-\mu_{t})^{T}dW_{t}p_{t}(x)$
for any $x\in\Rn$, 
\end{lem}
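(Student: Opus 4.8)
The plan is to write $p_{t}(x)=F_{t}(x)/Z_{t}$, where $F_{t}(x)=e^{c_{t}^{T}x-\frac{1}{2}\norm{x}_{B_{t}}^{2}}p(x)$ is the unnormalized density at a fixed point $x$ and $Z_{t}=\int_{\Rn}F_{t}(y)\,dy$ is the partition function, and then to compute $dp_{t}(x)$ by applying It\^o's formula to each of the two factors and recombining them with the product rule. Throughout, $x$ is held fixed, so the only sources of randomness are $c_{t}$ and $B_{t}$, which evolve by~(\ref{eq:dBt}); note that $B_{t}$ has finite variation (no $dW_{t}$ term), while $dc_{t}=C_{t}^{1/2}dW_{t}+C_{t}\mu_{t}dt$ carries the martingale part. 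The compact-support reduction made just before Definition~\ref{def:A} guarantees that $Z_{t},\mu_{t},A_{t}$ are well defined and bounded and, crucially, that we may move the stochastic differential inside the integral defining $Z_{t}$ and $\mu_{t}$; existence and uniqueness of the process is Lemma~\ref{lem:exist}.

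First I would compute $dF_{t}(x)$. Writing $g_{t}=c_{t}^{T}x-\frac{1}{2}\norm{x}_{B_{t}}^{2}$, we have $dg_{t}=x^{T}dc_{t}-\frac{1}{2}x^{T}C_{t}x\,dt$ and, since $B_{t}$ contributes nothing to the quadratic variation, $d[g]_{t}=x^{T}C_{t}x\,dt$; hence It\^o's formula gives $dF_{t}(x)=F_{t}(x)\bigl(dg_{t}+\tfrac{1}{2}d[g]_{t}\bigr)=F_{t}(x)\bigl(x^{T}C_{t}^{1/2}dW_{t}+x^{T}C_{t}\mu_{t}\,dt\bigr)$, the $-\frac{1}{2}x^{T}C_{t}x$ drift coming from $B_{t}$ cancelling exactly against the It\^o second-order term. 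Integrating this identity over $x$ and using $\int_{\Rn}F_{t}(y)\,y\,dy=Z_{t}\mu_{t}$ yields $dZ_{t}=Z_{t}\bigl(\mu_{t}^{T}C_{t}^{1/2}dW_{t}+\mu_{t}^{T}C_{t}\mu_{t}\,dt\bigr)$. Applying It\^o to $z\mapsto 1/z$ with $d[Z]_{t}=Z_{t}^{2}\,\mu_{t}^{T}C_{t}\mu_{t}\,dt$, the correction term again cancels the drift, leaving $d(1/Z_{t})=-\tfrac{1}{Z_{t}}\mu_{t}^{T}C_{t}^{1/2}dW_{t}$.

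Finally I would apply the product rule to $p_{t}(x)=F_{t}(x)\cdot(1/Z_{t})$. This produces three contributions: $p_{t}(x)\bigl(x^{T}C_{t}^{1/2}dW_{t}+x^{T}C_{t}\mu_{t}\,dt\bigr)$ from $Z_{t}^{-1}dF_{t}(x)$; the term $-p_{t}(x)\,\mu_{t}^{T}C_{t}^{1/2}dW_{t}$ from $F_{t}(x)\,d(1/Z_{t})$; and the quadratic covariation term $d[F(x),1/Z]_{t}=-p_{t}(x)\,x^{T}C_{t}\mu_{t}\,dt$. The two $x^{T}C_{t}\mu_{t}\,dt$ drift pieces cancel and the Brownian parts combine into $p_{t}(x)(x-\mu_{t})^{T}C_{t}^{1/2}dW_{t}$, which is the assertion. (Equivalently, one may differentiate $\log p_{t}(x)=c_{t}^{T}x-\frac{1}{2}\norm{x}_{B_{t}}^{2}+\log p(x)-\log Z_{t}$ directly, using $d\log Z_{t}=\mu_{t}^{T}C_{t}^{1/2}dW_{t}+\tfrac{1}{2}\mu_{t}^{T}C_{t}\mu_{t}\,dt$, and then exponentiate via It\^o; the cancellations are identical.)

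I expect the main difficulty to be bookkeeping rather than anything conceptual: one must carry along every It\^o second-order term and every quadratic covariation and verify that all the finite-variation $dt$ pieces annihilate, and one must rigorously justify interchanging the stochastic differential with the integral $\int_{\Rn}$ that defines $Z_{t}$ and $\mu_{t}$. This interchange is exactly where compact support of $p$ is used, since it supplies the uniform integrability needed; it is also what makes $\mu_{t}$ bounded and Lipschitz in $c_{t}$, so that Lemma~\ref{lem:exist} applies and the computations above are meaningful.
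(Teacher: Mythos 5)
Your proof is correct and follows essentially the same route as the paper's: write $p_{t}(x)$ as the ratio of the unnormalized density $q_{t}(x)=e^{c_{t}^{T}x-\frac{1}{2}\norm x_{B_{t}}^{2}}p(x)$ (your $F_{t}$) to the partition function $V_{t}=\int q_{t}(y)\,dy$ (your $Z_{t}$), compute $dq_{t}(x)$ and $dV_{t}^{-1}$ separately by It\^o, and recombine with the It\^o product rule, observing the cancellation of all $dt$ terms. The paper's proof of Lemma~\ref{lem:def-pt} uses exactly this decomposition and computes the identical quantities $d[V]_{t}$, $d[V^{-1},q(x)]_{t}$ that you do, so the two arguments coincide step for step.
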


By considering the derivative $d\log p_{t}(x)$, we see that applying
$dp_{t}(x)$ as in the lemma above results in the distribution $p_{t}(x)$,
with the Gaussian term in the density:
\begin{align*}
d\log p_{t}(x) & =\frac{dp_{t}(x)}{p_{t}(x)}-\frac{1}{2}\frac{d[p_{t}(x)]_{t}}{p_{t}(x)^{2}}=(x-\mu_{t})^{T}dW_{t}-\frac{1}{2}(x-\mu_{t})^{T}(x-\mu_{t})dt\\
 & =x^{T}dc_{t}-\frac{1}{2}\norm x^{2}dt+g(t)
\end{align*}
where the last term is independent of $x$ and the first two terms
explain the form of $p_{t}(x)$ and the appearance of the Gaussian. 

Next we analyze the change of the covariance matrix.
\begin{lem}
\label{lem:dA}We have that $dA_{t}=\int_{\Rn}(x-\mu_{t})(x-\mu_{t})^{T}\left((x-\mu_{t})^{T}dW_{t}\right)p_{t}(x)dx-A_{t}^{2}dt.$
\end{lem}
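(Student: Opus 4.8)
The plan is to write $A_t$ in terms of the raw second-moment matrix $M_t\defeq\int_{\Rn}xx^T p_t(x)\,dx$ via $A_t=M_t-\mu_t\mu_t^T$, differentiate each piece using Lemma~\ref{lem:def-pt}, and then reassemble the martingale part into the centered form claimed on the right-hand side. Because $p$, and hence every $p_t$, has compact support, all integrals below converge and differentiation under the integral sign is legitimate, so I will not belabor that.

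First I would compute $d\mu_t$. From $\mu_t=\int_{\Rn}x\,p_t(x)\,dx$ and $dp_t(x)=(x-\mu_t)^T C_t^{1/2}\,dW_t\,p_t(x)$ (Lemma~\ref{lem:def-pt}),
\[
d\mu_t=\Big(\int_{\Rn}x(x-\mu_t)^T p_t(x)\,dx\Big)C_t^{1/2}\,dW_t=A_t C_t^{1/2}\,dW_t,
\]
since $\int x(x-\mu_t)^T p_t=\int(x-\mu_t)(x-\mu_t)^T p_t=A_t$ (the cross term vanishes because $\int(x-\mu_t)p_t=0$). Consequently $d[\mu^i,\mu^j]_t=\big(A_t C_t^{1/2}(A_t C_t^{1/2})^T\big)_{ij}\,dt=(A_t C_t A_t)_{ij}\,dt$, using that $A_t$ and $C_t$ are symmetric.

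Next, for fixed $x$ the factor $xx^T$ is deterministic, so $d\big(xx^T p_t(x)\big)=xx^T\,dp_t(x)$ with no It\^o correction, giving $dM_t=\int_{\Rn}xx^T\big((x-\mu_t)^T C_t^{1/2}\,dW_t\big)p_t(x)\,dx$. It\^o's product rule applied to $\mu_t\mu_t^T$ gives $d(\mu_t\mu_t^T)=(d\mu_t)\mu_t^T+\mu_t(d\mu_t)^T+A_t C_t A_t\,dt$, the last term being the cross-variation computed above. Subtracting and expanding $(x-\mu_t)(x-\mu_t)^T=xx^T-x\mu_t^T-\mu_t x^T+\mu_t\mu_t^T$, one checks term by term that integrating against the scalar $(x-\mu_t)^T C_t^{1/2}\,dW_t\,p_t(x)$ turns the $xx^T$ piece into $dM_t$, the $-x\mu_t^T$ piece into $-(A_t C_t^{1/2}\,dW_t)\mu_t^T=-(d\mu_t)\mu_t^T$, the $-\mu_t x^T$ piece into $-\mu_t(d\mu_t)^T$, and the $\mu_t\mu_t^T$ piece into $0$. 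Hence $dA_t=dM_t-d(\mu_t\mu_t^T)=\int_{\Rn}(x-\mu_t)(x-\mu_t)^T\big((x-\mu_t)^T C_t^{1/2}\,dW_t\big)p_t(x)\,dx-A_t C_t A_t\,dt$, as claimed.

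The only genuinely delicate point is the drift term: one must be sure that $M_t$ itself carries no second-order contribution (it does not, since $xx^T$ is deterministic for fixed $x$ and $dp_t$ is a pure martingale increment), so the entire $-A_t C_t A_t\,dt$ originates from the self cross-variation of $\mu_t$; everything else is bookkeeping, modulo the standard compact-support justification for exchanging $d$ with $\int_{\Rn}$.
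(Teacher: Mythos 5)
Your proof is correct. The route is mildly different from the paper's: you decompose $A_t=M_t-\mu_t\mu_t^T$ with $M_t=\int xx^T p_t\,dx$, so that the drift $-A_tC_tA_t\,dt$ arises from a single source, the quadratic variation of $\mu_t$ in $d(\mu_t\mu_t^T)$, while $dM_t$ is pure martingale because $xx^T$ is deterministic. The paper instead treats $A_t$ as a function $f(\mu_t,p_t)$ of the pair and applies It\^o directly, which forces it to track the cross-variations $d[\mu_t,p_t(x)]_t$ and $d[\mu_t,\mu_t^T]_t$ separately (three second-order terms, two of which contribute $-A_tC_tA_t\,dt$ each and one $+A_tC_tA_t\,dt$). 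Your factorization avoids the $[\mu_t,p_t(x)]$ cross-variations entirely and isolates the only genuine It\^o correction, which is the more transparent bookkeeping; the paper's version has the minor advantage of not introducing the auxiliary object $M_t$ and of handing you the centered martingale integrand immediately rather than requiring the term-by-term reassembly you carry out at the end. Both are complete and rigorous.
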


\subsection{Bounding expansion }

Our goal is to bound the expansion by the spectral norm of the covariance
matrix at time $t.$ First, we bound the measure of a set of initial
measure $\frac{1}{2}$. 
\begin{lem}
\label{lem:volumeKLS}For any set $E\subset\Rn$ with $\int_{E}p(x)dx=\frac{1}{2}$
and $t\geq0$, we have that
\[
\P\left(\frac{1}{4}\leq\int_{E}p_{t}(x)dx\leq\frac{3}{4}\right)\geq\frac{9}{10}-\P\left(\int_{0}^{t}\norm{A_{s}}_{\spe}ds\geq\frac{1}{64}\right).
\]
\end{lem}

\begin{proof}
Let $g_{t}=\int_{E}p_{t}(x)dx$. Then, we have that
\[
dg_{t}=\left\langle \int_{E}(x-\mu_{t})p_{t}(x)dx,dW_{t}\right\rangle 
\]
where the integral might not be $0$ because it is over the subset
$E$ and not all of $\R^{n}$. Hence, we have
\begin{align*}
d[g_{t}]_{t} & =\norm{\int_{E}(x-\mu_{t})p_{t}(x)dx}_{2}^{2}dt=\max_{\norm{\zeta}_{2}\leq1}\left(\int_{E}\zeta^{T}(x-\mu_{t})p_{t}(x)dx\right)^{2}dt\\
 & \leq\left(\max_{\norm{\zeta}_{2}\leq1}\int_{\Rn}\left(\zeta^{T}(x-\mu_{t})\right)^{2}p_{t}(x)dx\right)\left(\int_{\Rn}p_{t}(x)dx\right)dt\\
 & =\max_{\norm{\zeta}_{2}\leq1}\left(\zeta^{T}A_{t}\zeta\right)dt=\norm{A_{t}}_{\spe}dt.
\end{align*}
Hence, we have that $\frac{d[g_{t}]_{t}}{dt}\leq\norm{A_{t}}_{\spe}$.
By the Dambis, Dubins-Schwarz theorem, there exists a Wiener process
$\tilde{W}_{t}$ such that $g_{t}-g_{0}$ has the same distribution
as $\tilde{W}_{[g]_{t}}$. Using $g_{0}=\frac{1}{2}$, we have that
\begin{align*}
\P(\frac{1}{4}\leq g_{t}\leq\frac{3}{4}) & =\P(\frac{-1}{4}\leq\tilde{W}_{[g]_{t}}\leq\frac{1}{4})\geq1-\P(\max_{0\leq s\leq\frac{1}{64}}\left|\tilde{W}_{s}\right|>\frac{1}{4})-\P([g]_{t}>\frac{1}{64})\\
 & \overset{\cirt 1}{\geq}1-4\P(\tilde{W}_{\frac{1}{64}}>\frac{1}{4})-\P([g]_{t}>\frac{1}{64})\\
 & \overset{\cirt 2}{\geq}\frac{9}{10}-\P([g]_{t}>\frac{1}{64})
\end{align*}
where we used reflection principle for 1-dimensional Brownian motion
in $\cirt 1$ and the concentration of normal distribution in $\cirt 2,$
namely $\P_{x\sim N(0,1)}$$\left(x>2\right)\le0.0228$. 
\end{proof}

At time $t$, the distribution is $t$-strongly log-concave and it
is known that it has expansion $\Omega(\sqrt{t})$. The following
isoperimetric inequality was proved in \cite{CV2014} and was also
used in \cite{Eldan2013}. In Theorem \ref{thm:Gaussian-iso-2}, we
prove a strengthen version of the following theorem, which is needed
for bounding the log-Sobolev constant.
\begin{thm}
\label{thm:Gaussian-iso}Let $h(x)=f(x)e^{-\frac{t}{2}\norm x_{2}^{2}}/\int f(y)e^{-\frac{t}{2}\norm y_{2}^{2}}dy$
where $f:\R^{n}\rightarrow\R_{+}$ is an integrable logconcave function.
Then $h$ is logconcave and for any measurable subset $S$ of $\R^{n}$,
\[
\int_{\partial S}h(x)dx=\Omega\left(\sqrt{t}\right)\min\left\{ \int_{S}h(x)dx,\int_{\R^{n}\setminus S}h(x)dx\right\} .
\]
In other words, the expansion of h is $\Omega\left(\sqrt{t}\right)$.
\end{thm}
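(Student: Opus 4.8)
The plan is to prove the isoperimetric bound by \emph{localization}: reduce the $n$-dimensional inequality to a one-dimensional statement about needles, and then settle that with the Brascamp--Lieb inequality (Theorem~\ref{thm:Brascamp-Lieb}). Logconcavity of $h$ is immediate, since $f$ is logconcave and $x\mapsto e^{-\frac12 x^{T}Bx}$ is logconcave (its logarithm is a concave quadratic because $B\succ 0$), so the product and its normalization are logconcave (Lemma~\ref{lem:marginal}). For the isoperimetric claim it suffices, by the standard Minkowski-content/coarea passage, to prove the ``three-set'' form: there is an absolute constant $c>0$ such that for every $\epsilon>0$ and every partition $\R^{n}=S_{1}\cup S_{2}\cup S_{3}$ with $d(S_{1},S_{2})\geq\epsilon$,
\[
\int_{S_{3}}h \;\geq\; c\,\epsilon\,\norm{B^{-1}}_{2}^{-1/2}\,\min\Bigl\{\int_{S_{1}}h,\ \int_{S_{2}}h\Bigr\};
\]
taking $S_{3}$ to be the open $\epsilon$-neighborhood of $\partial S$ and letting $\epsilon\to 0$ recovers the stated bound on $\int_{\partial S}h$.

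Next I would feed a hypothetical violation of this three-set inequality into the localization lemma of Lov\'asz--Simonovits \cite{LS93,KLS95}. Absorbing $h$ into the test functions, the output is a one-dimensional \emph{needle}: a segment $\gamma(s)=a+sv$ ($v$ a unit vector, $s$ arc length) carrying the weight $w(s)=\ell(s)^{n-1}h(\gamma(s))$ for some nonnegative affine $\ell$, on which the same violation persists, with the pieces of $S_{1},S_{2},S_{3}$ becoming subsets of the line and a gap of length $\geq\epsilon$ separating the $S_{1}$-part from the $S_{2}$-part inside the $S_{3}$-part. The key structural observation is that $w$ still has the shape (logconcave)\,$\times$\,(Gaussian): expanding
\[
(a+sv)^{T}B(a+sv)=(v^{T}Bv)\,s^{2}+2(v^{T}Ba)\,s+a^{T}Ba,
\]
the quadratic coefficient satisfies $v^{T}Bv\geq\lambda_{\min}(B)=\norm{B^{-1}}_{2}^{-1}$, while $\ell(s)^{n-1}f(\gamma(s))\,e^{-(v^{T}Ba)s-\frac12 a^{T}Ba}$ is logconcave in $s$, being a product of the logconcave functions $s\mapsto\ell(s)^{n-1}$, $s\mapsto f(\gamma(s))$, and an exponential of an affine function. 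Hence $w(s)=e^{-s^{2}/(2\sigma^{2})}\tilde g(s)$ with $\tilde g\geq 0$ logconcave and $\sigma^{2}=1/(v^{T}Bv)\leq\norm{B^{-1}}_{2}$ (extend by $0$ to all of $\R$; logconcavity is preserved).

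It then remains to prove the one-dimensional inequality: if $w(s)=e^{-s^{2}/(2\sigma^{2})}\tilde g(s)$ is a probability density with $\tilde g\geq 0$ logconcave, then for all $a<b$,
\[
\int_{a}^{b}w \;\geq\; \frac{c\,(b-a)}{\sigma}\,\min\Bigl\{\int_{-\infty}^{a}w,\ \int_{b}^{\infty}w\Bigr\}.
\]
After rescaling $s\mapsto s/\sigma$, Theorem~\ref{thm:Brascamp-Lieb} in dimension one (with $\alpha=2$) gives $\Var_{w}\leq\sigma^{2}$; combining this with the classical fact that a one-dimensional logconcave probability density of variance $V$ has Cheeger constant $\Omega(1/\sqrt{V})$, and with the logconcavity of $w$ (so that $\min_{[a,b]}w=\min\{w(a),w(b)\}$), yields the displayed inequality with $V\leq\sigma^{2}$. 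Thus every needle has isoperimetric coefficient $\Omega(1/\sigma)=\Omega(\norm{B^{-1}}_{2}^{-1/2})$, uniformly in the needle, and unwinding the localization transfers this bound to $h$, completing the proof.

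The main obstacle is the interaction between the localization step and the variance bound: one has to verify that the localization weight $\ell(s)^{n-1}$ and the linear term $2(v^{T}Ba)s$ in the exponent are harmless — they merge into the logconcave factor $\tilde g$ — so that Brascamp--Lieb controls the needle's variance by $\sigma^{2}\leq\norm{B^{-1}}_{2}$, a quantity governed only by $v^{T}Bv\geq\lambda_{\min}(B)$ and independent of the dimension. This is exactly the point where the argument must go through needles rather than through the global bound $\cov(h)\preceq B^{-1}$, which would only yield the dimension-dependent $O\!\left(\sqrt{n\norm{B^{-1}}_{2}}\right)$-type estimate via Theorem~\ref{thm:KLS_thm}. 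The $\epsilon\to 0$ passage and the one-dimensional logconcave isoperimetry are routine.
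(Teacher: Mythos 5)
Your proposal is correct and follows essentially the same route as the paper's own (sketched) proof: localize to one-dimensional needles, observe that each needle density is still of the form logconcave times a one-dimensional Gaussian whose variance is bounded by $\norm{B^{-1}}_{2}$ (since $v^{T}Bv\geq\lambda_{\min}(B)$, with the linear and constant terms of the quadratic and the localization weight $\ell(s)^{n-1}$ absorbed into the logconcave factor), then apply Brascamp--Lieb in dimension one to bound the needle's variance, and finish with the standard one-dimensional logconcave isoperimetry. The paper defers the details to \cite[Thm.\ 4.4]{CV2014}, but the reasoning it describes is exactly yours.
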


We can now prove a bound on the expansion. 
\begin{lem}
\label{lem:boundAgivesKLS}Given a logconcave distribution $p$. Let
$A_{t}$ be defined by Definition \ref{def:A} using initial distribution
$p$. Suppose that there is $T>0$ such that
\[
\P\left(\int_{0}^{T}\norm{A_{s}}_{\op}ds\leq\frac{1}{64}\right)\geq\frac{3}{4}
\]
Then, we have that $\psi_{p}=O\left(T^{-1/2}\right).$
\end{lem}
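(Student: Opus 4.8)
The plan is to combine the volume-control lemma (Lemma~\ref{lem:volume}) with the Gaussian isoperimetric inequality (Theorem~\ref{thm:Gaussian-iso}) via a union bound, using the fact that $p_t$ is, by construction, a logconcave function times the Gaussian factor $e^{-\frac12 x^T B_t x}$. First I would invoke the hypothesis together with Lemma~\ref{lem:volume}: since $\P(\int_0^T \norm{C_s^{1/2}A_sC_s^{1/2}}_2\,ds \le \frac{1}{64}\text{ and }B_T\succeq uI)\ge \frac34$, and since $\P(\int_0^T \norm{C_s^{1/2}A_sC_s^{1/2}}_2\,ds \ge \frac{1}{64}) \le \frac{1}{10}$ would contradict\,---\,more precisely, Lemma~\ref{lem:volume} gives $\P(\frac14\le \int_E p_T \le \frac34) \ge \frac{9}{10} - \P(\int_0^T\norm{C_s^{1/2}A_sC_s^{1/2}}_2\,ds\ge\frac{1}{64})$, and the hypothesis bounds the latter probability by $\frac14$, so $\P(\frac14\le\int_E p_T\le\frac34)\ge \frac{9}{10}-\frac14 > \frac12$. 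A second union bound with the event $\{B_T\succeq uI\}$ (which has probability at least $\frac34$) shows that with positive probability \emph{both} $\frac14\le\int_E p_T\le\frac34$ \emph{and} $B_T\succeq uI$ hold. Hence there exists a realization of the process at time $T$ for which both hold simultaneously.

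Next, fix such a realization. The density $p_T(x) = f(x)\,e^{c_T^T x - \frac12 x^T B_T x}/Z$ for the original logconcave $p$; absorbing the linear term $e^{c_T^T x}$ into $f$ (a translate/tilt of a logconcave function is logconcave by Lemma~\ref{lem:marginal}), we may write $p_T(x) = \tilde f(x)\,e^{-\frac12 x^T B_T x}/Z'$ with $\tilde f$ logconcave and $B_T$ positive definite (indeed $B_T\succeq uI\succ 0$). Theorem~\ref{thm:Gaussian-iso} then applies with $B = B_T$: the expansion of $p_T$ is $\Omega(\norm{B_T^{-1}}_2^{-1/2})$. Since $B_T\succeq uI$ implies $\norm{B_T^{-1}}_2 \le 1/u$, we get that the expansion of $p_T$ is $\Omega(u^{-1/2})$, equivalently $\psi_{p_T} = O(u^{1/2})$.

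Finally I would transfer this bound on $\psi_{p_T}$ back to $\psi_p$. Here the point is that the set $E$ still has $p_T$-measure in $[\frac14,\frac34]$, so it is a "balanced" test set for the isoperimetry of $p_T$: applying the definition of $\psi_{p_T}$ to $E$ gives $\int_{\partial E} p_T \ge c\,u^{-1/2}\min\{\int_E p_T, \int_{E^c} p_T\} \ge c\,u^{-1/2}\cdot\frac14$. But we want a statement about $p$, not $p_T$; so instead I would run the argument in the contrapositive direction using Theorem~\ref{thm:milman}: by that theorem it suffices to lower-bound $\int_{\partial E}p(x)\,dx$ for an arbitrary $E$ with $\int_E p = \frac12$. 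Given such an $E$, run the stochastic localization; the above shows that with positive probability $E$ is balanced under $p_T$ and $p_T$ has expansion $\Omega(u^{-1/2})$, so $\int_{\partial E}p_T = \Omega(u^{-1/2})$ on that event. Taking expectations and using that $p_t$ is a martingale (Lemma~\ref{lem:def-pt} gives $dp_t(x) = (x-\mu_t)^T C_t^{1/2}\,dW_t\,p_t(x)$, so $\E\, p_T(x) = p(x)$ pointwise, hence $\E\int_{\partial E}p_T = \int_{\partial E}p$), the positive-probability event contributes $\Omega(u^{-1/2})$ to $\E\int_{\partial E}p_T$, yielding $\int_{\partial E}p(x)\,dx = \Omega(u^{-1/2})$. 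Since $E$ was an arbitrary measure-$\frac12$ set, $\psi_p^{-1} = \Omega(u^{-1/2})$, i.e. $\psi_p = O(u^{1/2})$.

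The main obstacle is the transfer step: making precise how a lower bound on the expansion of the \emph{random} measure $p_T$ yields a lower bound on the expansion of the \emph{deterministic} measure $p$. The clean way is the martingale/expectation argument above applied to the boundary integral $\int_{\partial E}p_t$, which is a nonnegative martingale; one must be slightly careful that the "good event" (on which both balance and the Gaussian isoperimetric bound hold) has probability bounded below by an absolute constant, so that its contribution to $\E\int_{\partial E}p_T$ is a constant fraction of $\Omega(u^{-1/2})$, and that the complementary event contributes nonnegatively (which it does, as $p_T\ge 0$). This is exactly where the two union bounds in the first paragraph are needed, and where the constant $\frac34$ in the hypothesis is used.
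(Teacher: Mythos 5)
Your approach is essentially the paper's own: restrict to a half-measure set via Theorem~\ref{thm:milman}, feed the hypothesis into Lemma~\ref{lem:volume} and union-bound with $\{B_T\succeq uI\}$ to get a constant-probability ``good event,'' apply Theorem~\ref{thm:Gaussian-iso} to $p_T$ on that event, and transfer to $p$ via the martingale identity $\int_{\partial E}p=\E\int_{\partial E}p_T$. The structure, the use of nonnegativity of $\int_{\partial E}p_T$ off the good event, and the role of the constant $\tfrac34$ all match the paper's proof.

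There is, however, an exponent slip in the middle that propagates to your conclusion. From $B_T\succeq uI$ you correctly get $\norm{B_T^{-1}}_2\le 1/u$, and Theorem~\ref{thm:Gaussian-iso} gives expansion $\Omega(\norm{B_T^{-1}}_2^{-1/2})$; but $\norm{B_T^{-1}}_2\le 1/u$ implies $\norm{B_T^{-1}}_2^{-1/2}\ge u^{1/2}$, so the expansion of $p_T$ is $\Omega(u^{1/2})$, not $\Omega(u^{-1/2})$ as you wrote. (Sanity check: the Gaussian factor has covariance $B_T^{-1}\preceq u^{-1}I$, i.e.\ standard deviation at most $u^{-1/2}$, and the isoperimetric ratio of such a Gaussian is at least $u^{1/2}$.) Carrying the correct exponent through your chain gives $\int_{\partial E}p_T=\Omega(u^{1/2})$ on the good event, hence $\int_{\partial E}p=\Omega(u^{1/2})$, hence the Cheeger constant $\psi_p^{-1}=\Omega(u^{1/2})$, i.e.\ $\psi_p=O(u^{-1/2})$. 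Your final line $\psi_p=O(u^{1/2})$ is therefore off by a reciprocal; with $u\approx 1/\sqrt n$ as in the proof of Theorem~\ref{thm:n14}, $O(u^{1/2})$ would give $\psi_p=O(n^{-1/4})$, which is absurdly strong, whereas $O(u^{-1/2})=O(n^{1/4})$ is the intended bound. You should also be aware that the lemma as printed reads ``$\psi_p=\Omega(u^{-1/2})$,'' which is evidently a typographical slip for $\psi_p=O(u^{-1/2})$ (equivalently: the expansion is $\Omega(u^{1/2})$); the paper's own proof has the same sign confusion in the step ``$\Omega(\lambda_{\min}(B_T)^{-1/2})$,'' which should read $\Omega(\lambda_{\min}(B_T)^{1/2})$. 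So neither your conclusion nor the displayed lemma statement is literally correct, but the intended and correct statement is $\psi_p=O(u^{-1/2})$, and your argument proves exactly that once the exponent is fixed.
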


\begin{proof}
By Milman's theorem \cite{Milman2009}, it suffices to consider subsets
of measure $\frac{1}{2}.$ Consider any measurable subset $E$ of
$\R^{n}$ of initial measure $\frac{1}{2}$. By Lemma \ref{lem:def-pt},
$p_{t}$ is a martingale and therefore
\[
\int_{\partial E}p(x)dx=\int_{\partial E}p_{0}(x)dx=\E\left(\int_{\partial E}p_{t}(x)dx\right).
\]
Next, by the definition of $p_{T}$ (\ref{eq:dBt}), we have that
$p_{T}(x)\propto e^{c_{T}^{T}x-\frac{T}{2}\norm x^{2}}p(x)$ and Theorem
\ref{thm:Gaussian-iso} shows that the expansion of $E$ is $\Omega\left(\sqrt{T}\right)$.
Hence, we have
\begin{align*}
\int_{\partial E}p(x)dx & =\E\int_{\partial E}p_{T}(x)dx\gtrsim\sqrt{T}\cdot\E\left(\min\left(\int_{E}p_{T}(x)dx,\int_{\bar{E}}p_{T}(x)dx\right)\right)\\
 & \gtrsim\sqrt{T}\cdot\P\left(\frac{1}{4}\leq\int_{E}p_{T}(x)dx\leq\frac{3}{4}\right)\overset{\lref{volumeKLS}}{\gtrsim}\sqrt{T}\cdot\left(\frac{9}{10}-\P(\int_{0}^{t}\norm{A_{s}}_{\op}ds\geq\frac{1}{64})\right)=\Omega(\sqrt{T})
\end{align*}
where we used the assumption at the end. Using Theorem \ref{thm:milman},
this shows that $\psi_{p}=O\left(T^{-1/2}\right).$
\end{proof}

\section{Controlling $A_{t}$ via the potential $\protect\tr(A_{t}^{2})$
\label{sec:A2}}

\subsection{Third moment bounds}

Here are two key lemmas about the third-order tensor of a log-concave
distribution. 
\begin{lem}
\label{lem:tensorestimate}Given a logconcave distribution $p$ with
mean $\mu$ and covariance $A$. For any positive semi-definite matrix
$C$, we have that
\[
\norm{\E_{x\sim p}(x-\mu)(x-\mu)^{T}C(x-\mu)}_{2}=O\left(\norm A_{\op}^{1/2}\tr\left(A^{1/2}CA^{1/2}\right)\right).
\]
\end{lem}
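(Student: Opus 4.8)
The plan is to reduce to the isotropic case and then invoke the logconcave moment bounds of Lemmas~\ref{lem:marginal} and~\ref{lem:lcmom}. By translating we may assume $\mu=0$, and by restricting $p$ to the affine hull of its support we may assume $A\succ0$: then $x$ lies in $R(A)$ almost surely, and one checks that replacing $B,C$ by their compressions to $R(A)$ alters neither side of the claimed inequality. Substitute $x=A^{1/2}y$, so that $y$ is isotropic logconcave, and set $\tilde{C}\defeq A^{1/2}CA^{1/2}$ (symmetric). Then
\[
\E_{x\sim p}B^{1/2}xx^{T}Cx=B^{1/2}A^{1/2}\,\E_{y}\bigl(y\,(y^{T}\tilde{C}y)\bigr),
\]
and since $\norm{B^{1/2}A^{1/2}}_{2}^{2}=\norm{B^{1/2}AB^{1/2}}_{2}=\norm{A^{1/2}BA^{1/2}}_{2}$ (the matrices $B^{1/2}AB^{1/2}$ and $A^{1/2}BA^{1/2}$ share their nonzero spectrum), it suffices to show
\[
\norm{\E_{y}\bigl(y\,(y^{T}\tilde{C}y)\bigr)}_{2}=O\bigl(\tr|\tilde{C}|\bigr)
\]
for every isotropic logconcave $y$ and every symmetric matrix $\tilde{C}$.

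To prove this, fix a unit vector $\zeta$; I bound $\E_{y}\bigl[(\zeta^{T}y)(y^{T}\tilde{C}y)\bigr]$ uniformly in $\zeta$. The key step is to split $\tilde{C}=\tilde{C}_{+}-\tilde{C}_{-}$ into its positive and negative parts, both positive semidefinite with $\tr\tilde{C}_{\pm}\le\tr|\tilde{C}|$, so that each $y^{T}\tilde{C}_{\pm}y=\norm{\tilde{C}_{\pm}^{1/2}y}^{2}$ is nonnegative. For positive semidefinite $D$, Cauchy--Schwarz together with the isotropy of $y$ gives
\[
\E_{y}\bigl[(\zeta^{T}y)(y^{T}Dy)\bigr]\le\sqrt{\E_{y}(\zeta^{T}y)^{2}}\cdot\sqrt{\E_{y}\norm{D^{1/2}y}^{4}}=\sqrt{\E_{y}\norm{D^{1/2}y}^{4}}.
\]
Now $D^{1/2}y$ has a logconcave density by Lemma~\ref{lem:marginal}, so Lemma~\ref{lem:lcmom} with $k=4$ gives $\E_{y}\norm{D^{1/2}y}^{4}=O\bigl((\E_{y}\norm{D^{1/2}y}^{2})^{2}\bigr)=O\bigl((\tr D)^{2}\bigr)$, using isotropy once more. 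Hence $\E_{y}\bigl[(\zeta^{T}y)(y^{T}Dy)\bigr]=O(\tr D)$; applying this with $D=\tilde{C}_{+}$ and $D=\tilde{C}_{-}$ and subtracting yields $\E_{y}\bigl[(\zeta^{T}y)(y^{T}\tilde{C}y)\bigr]=O(\tr|\tilde{C}|)$, and taking the supremum over $\zeta$ finishes the reduced claim --- hence, with the first paragraph, the lemma.

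The bookkeeping for degenerate $A$ and the identity $\norm{B^{1/2}A^{1/2}}_{2}^{2}=\norm{A^{1/2}BA^{1/2}}_{2}$ are routine. The one step that really matters is the positive/negative splitting of $C$: applying Cauchy--Schwarz directly to $\E_{y}[(\zeta^{T}y)(y^{T}\tilde{C}y)]$ would leave the factor $\sqrt{\E_{y}(y^{T}\tilde{C}y)^{2}}$, and bounding $\Var(y^{T}\tilde{C}y)$ through Lemma~\ref{lem:quadratic-form} would reintroduce a $\psi$ factor. Writing $y^{T}\tilde{C}_{\pm}y$ as $\norm{\tilde{C}_{\pm}^{1/2}y}^{2}$ and using the fourth-moment inequality of Lemma~\ref{lem:lcmom} instead is exactly what keeps the bound free of any spectral-gap constant. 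Beyond getting these constants right I do not expect a genuine obstacle.
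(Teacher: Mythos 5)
Your proof is correct, and it differs from the paper's in a meaningful way. Both arguments hinge on the same two ingredients --- Cauchy--Schwarz against the isotropized variable $y=A^{-1/2}(x-\mu)$, and the logconcave moment bound of Lemma~\ref{lem:lcmom} --- but the decompositions are different. The paper writes $C=\sum_i\lambda_i v_iv_i^T$ in an eigenbasis of $C$, proves the rank-one case using the fourth moment of the one-dimensional marginal $y^Tw$, and sums over eigenvectors. You instead take $\tilde{C}=A^{1/2}CA^{1/2}$ and split it into positive and negative parts $\tilde{C}_\pm$, using the fourth moment of the vector $\tilde{C}_\pm^{1/2}y$. Both are valid, but the two decompositions are not equivalent bookkeeping: summing over the eigendecomposition of $C$ yields $\sum_i|\lambda_i|\,v_i^TAv_i=\tr\bigl(A^{1/2}|C|A^{1/2}\bigr)$, and in general $\tr\bigl(A^{1/2}|C|A^{1/2}\bigr)\ge\tr\bigl|A^{1/2}CA^{1/2}\bigr|$ with strict inequality when $A$ and $C$ do not commute (e.g.\ $A=\mathrm{diag}(1,4)$, $C=\left(\begin{smallmatrix}0&1\\1&0\end{smallmatrix}\right)$ gives $5$ vs.\ $4$). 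So the paper's displayed computation actually establishes the bound with the larger quantity $\tr(A^{1/2}|C|A^{1/2})$ on the right, and the final equality sign there is really an inequality; this does not affect any of the paper's uses of the lemma, since every invocation has $C\succeq0$ or $C$ a polynomial in $A$ (so $A$ and $C$ commute and the two traces coincide), but it means the paper's argument as written proves a slightly weaker form of the lemma than stated. Your positive/negative split is aligned with the correct invariant from the start --- $\tr\tilde{C}_++\tr\tilde{C}_-=\tr|\tilde{C}|$ is exactly $\tr|A^{1/2}CA^{1/2}|$ --- so it proves the lemma exactly as stated, and is therefore arguably the cleaner route. The rest of your argument (the reduction to $A\succ0$, the identity $\norm{B^{1/2}A^{1/2}}_2^2=\norm{A^{1/2}BA^{1/2}}_2$, the use of Lemma~\ref{lem:marginal} to keep $D^{1/2}y$ logconcave) is sound.
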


\begin{proof}
We first prove the case $C=vv^{T}$. Taking $y=A^{-1/2}(x-\mu)$ and
$w=A^{1/2}v$. Then, $y$ follows an isotropic logconcave distribution
$\tilde{p}$ and hence
\begin{align*}
\norm{\E_{x\sim p}(x-\mu)(x-\mu)^{T}C(x-\mu)}_{2} & =\norm{\E_{y\sim\tilde{p}}A^{1/2}y\left(y^{T}w\right)^{2}}_{2}=\max_{\norm{\zeta}_{2}\leq1}\E_{y\sim\tilde{p}}(A^{1/2}y)^{T}\zeta\left(y^{T}w\right)^{2}\\
 & \leq\max_{\norm{\zeta}_{2}\leq1}\sqrt{\E_{y\sim\tilde{p}}\left((A^{1/2}y)^{T}\zeta\right)^{2}}\sqrt{\E_{y\sim\tilde{p}}\left(y^{T}w\right)^{4}}\lesssim\norm A_{\op}^{1/2}\norm w_{2}^{2}
\end{align*}
where we used the fact that for a fixed $w$, $y^{T}w$ has a one-dimensional
logconcave distribution (Lemma \ref{lem:marginal}) and hence Lemma
\ref{lem:lcmom} shows that
\[
\E_{y\sim\tilde{p}}\left(y^{T}w\right)^{4}\lesssim\left(\E_{y\sim\tilde{p}}\left(y^{T}w\right)^{2}\right)^{2}\lesssim\norm w_{2}^{4}.
\]

For a general PSD matrix $C$, we write $C=\sum\lambda_{i}v_{i}v_{i}^{T}$
where $\lambda_{i}\ge0$, $v_{i}$ are eigenvalues and eigenvectors
of $C$. Hence, we have that
\begin{align*}
\norm{\E_{x\sim p}(x-\mu)(x-\mu)^{T}C(x-\mu)}_{2} & \leq\sum_{i}\lambda_{i}\norm{\E_{x\sim p}(x-\mu)(x-\mu)^{T}v_{i}v_{i}^{T}(x-\mu)}_{2}\lesssim\sum_{i}\lambda_{i}\norm A_{\op}^{1/2}\norm{A^{1/2}v_{i}}^{2}\\
 & =\norm A_{\op}^{1/2}\sum_{i}\tr\left(A^{1/2}\lambda_{i}v_{i}v_{i}^{T}A^{1/2}\right)=\norm A_{\op}^{1/2}\tr\left(A^{1/2}CA^{1/2}\right).
\end{align*}
\end{proof}
\begin{lem}
\label{lem:tensor_norm}Given a logconcave distribution $p$ with
mean $\mu$ and covariance $A$. We have
\[
\E_{x,y\sim p}\left|\left\langle x-\mu,y-\mu\right\rangle \right|^{3}=O\left(\tr\left(A^{2}\right)^{3/2}\right).
\]
\end{lem}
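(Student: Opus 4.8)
The plan is to reduce everything to one-dimensional logconcave moment bounds by conditioning. Write $X = x-\mu$ and $Y = y-\mu$, so that $X,Y$ are i.i.d.\ draws from the centered version of $p$, and we must bound $\E\left|\langle X,Y\rangle\right|^{3}$. First I would condition on $Y$: for fixed $Y$, the quantity $\langle X,Y\rangle = X^{T}Y$ is a linear functional of the logconcave vector $X$, hence by Lemma \ref{lem:marginal} it has a one-dimensional logconcave distribution, with variance $\E_{X}(X^{T}Y)^{2} = Y^{T}AY = \norm{Y}_{A}^{2}$. Applying Lemma \ref{lem:lcmom} with $k=3$ to this one-dimensional marginal gives
\[
\E_{X}\left|\langle X,Y\rangle\right|^{3} \le 6^{3}\left(\E_{X}(X^{T}Y)^{2}\right)^{3/2} = O(1)\left(Y^{T}AY\right)^{3/2}.
\]
Taking expectation over $Y$, it remains to show $\E_{Y}\left(Y^{T}AY\right)^{3/2} = O\left(\tr(A^{2})^{3/2}\right)$.

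For this second step I would again invoke logconcavity, this time of the scalar random variable $Z \defeq Y^{T}AY = \norm{A^{1/2}Y}_{2}^{2}$. Since $A^{1/2}Y$ is a logconcave vector (Lemma \ref{lem:marginal}), $\norm{A^{1/2}Y}_{2}$ is itself distributed as the norm of a logconcave vector, so Lemma \ref{lem:lcmom} applies to it: $\E \norm{A^{1/2}Y}_{2}^{3} \le 6^{3}\left(\E\norm{A^{1/2}Y}_{2}^{2}\right)^{3/2} = O(1)\left(\tr(A^{2})\right)^{3/2}$, using that $\E\norm{A^{1/2}Y}_{2}^{2} = \E\, Y^{T}AY = \tr(A \cdot A) = \tr(A^{2})$ because $\E YY^{T} = A$. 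Since $\E_{Y}\left(Y^{T}AY\right)^{3/2} = \E_{Y}\norm{A^{1/2}Y}_{2}^{3}$, this is exactly the bound we need, and combining the two steps yields $\E\left|\langle X,Y\rangle\right|^{3} = O\left(\tr(A^{2})^{3/2}\right)$.

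The only subtlety — and the place I would be most careful — is the bookkeeping in applying Lemma \ref{lem:lcmom}, which is stated for densities in $\R^{n}$ in terms of $\E\norm{x}^{2}$; one must make sure each application is to a genuine logconcave density (one-dimensional in the first step, the pushforward of $p$ under $x \mapsto A^{1/2}(x-\mu)$ in the second, possibly degenerate if $A$ is singular, in which case one works on the range of $A$) and that the constant $(2k)^{k}$ is tracked. There is no real obstacle beyond this: both steps are direct consequences of the moment comparison for logconcave measures, and the identity $\E Y^{T}AY = \tr(A^{2})$ is immediate from $\E YY^{T} = A$. Note one could alternatively collapse both steps by bounding $\left|\langle X,Y\rangle\right|^{3} \le \norm{A^{-1/2}X}\cdot\norm{A^{1/2}Y}\cdots$, but conditioning is cleaner and avoids invertibility issues.
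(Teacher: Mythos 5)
Your proof is correct and takes essentially the same two-step approach as the paper: first condition on one variable and apply the logconcave moment bound (Lemma \ref{lem:lcmom}) to the one-dimensional marginal $\langle X,Y\rangle$, reducing to $\E(Y^TAY)^{3/2}$; then recognize this as $\E\norm{A^{1/2}Y}^3$ and apply the moment bound again to the logconcave vector $A^{1/2}Y$, using $\E\norm{A^{1/2}Y}^2=\tr(A^2)$. The only cosmetic difference is which of the two i.i.d.\ variables you freeze first, which is immaterial by symmetry.
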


\begin{proof}
Without loss of generality, we assume $\mu=0$. For a fixed $x$ and
random $y$, $\langle x,y\rangle$ follows a one-dimensional logconcave
distribution (Lemma \ref{lem:marginal}) and hence Lemma \ref{lem:lcmom}
shows that
\[
\E_{y\sim p}\left|\langle x,y\rangle\right|^{3}\lesssim\left(\E_{y\sim p}\langle x,y\rangle^{2}\right)^{3/2}=\left(x^{T}Ax\right)^{3/2}.
\]
Next, we note that $A^{1/2}x$ follows a logconcave distribution (Lemma
\ref{lem:marginal}) and hence Lemma \ref{lem:lcmom} shows that
\[
\E_{x,y\sim p}\left|\langle x,y\rangle\right|^{3}\lesssim\E_{x\sim p}\norm{A^{1/2}x}^{3}\lesssim\left(\E_{x\sim p}\norm{A^{1/2}x}^{2}\right)^{3/2}=\tr\left(A^{2}\right)^{3/2}.
\]
\end{proof}

\subsection{Analysis of $A_{t}$}

Using Itô's formula and Lemma \ref{lem:dA}, we compute the derivatives
of $\tr A_{t}^{2}$. 
\begin{lem}
\label{lem:trace_itos}Let $A_{t}$ be defined by Definition \ref{def:A}.
We have that
\begin{align*}
d\tr A_{t}^{2}= & 2\E_{x\sim p_{t}}(x-\mu_{t})^{T}A_{t}(x-\mu_{t})(x-\mu_{t})^{T}dW_{t}-2\tr(A_{t}^{3})dt+\E_{x,y\sim p_{t}}\left((x-\mu_{t})^{T}(y-\mu_{t})\right)^{3}dt.
\end{align*}
\end{lem}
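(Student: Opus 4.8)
The plan is to compute $d\tr(A_t^2)$ directly from the evolution of $A_t$ given in Lemma \ref{lem:dA}, specialized to $C_t = I$. Write the increment as $dA_t = dM_t - A_t^2\,dt$, where
$$dM_t = \int_{\Rn}(x-\mu_t)(x-\mu_t)^T\big((x-\mu_t)^T dW_t\big)p_t(x)\,dx$$
is the matrix-valued martingale part and $-A_t^2\,dt$ is the drift. By the matrix Itô product rule applied to the map $A \mapsto \tr(A^2)$ (equivalently, Itô's formula applied entrywise, using $\tr(A^2) = \sum_{ij} A_{ij}^2$ for symmetric $A$), one gets
$$d\tr(A_t^2) = 2\tr(A_t\,dA_t) + \tr\big((dA_t)^2\big),$$
where in the second-order term only the martingale part of $dA_t$ survives at order $dt$, so $\tr\big((dA_t)^2\big) = \tr\big((dM_t)^2\big)$.

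First I would dispatch the first-order terms. The drift contributes $2\tr\big(A_t\cdot(-A_t^2)\big)\,dt = -2\tr(A_t^3)\,dt$, the middle term of the claimed identity. For the martingale contribution, pull the deterministic matrix $A_t$ inside the integral and use cyclicity of the trace:
$$2\tr(A_t\,dM_t) = 2\int_{\Rn}\tr\big(A_t (x-\mu_t)(x-\mu_t)^T\big)\big((x-\mu_t)^T dW_t\big)p_t(x)\,dx = 2\,\E_{x\sim p_t}\big[(x-\mu_t)^T A_t (x-\mu_t)(x-\mu_t)^T\big]dW_t,$$
which is exactly the first term of the statement.

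The crux is the quadratic-variation term $\tr\big((dM_t)^2\big)$. Writing $v = x-\mu_t$ and introducing an independent copy $y\sim p_t$ with $w = y-\mu_t$, we have $(dM_t)_{ij} = \sum_k \E_x[v_i v_j v_k]\,dW_t^k$, hence $d[(M_t)_{ij},(M_t)_{kl}] = \sum_m \E_x[v_i v_j v_m]\,\E_y[w_k w_l w_m]\,dt$ using $d[W^m, W^{m'}]_t = \delta_{mm'}\,dt$. Since $dM_t$ is symmetric, $\tr\big((dM_t)^2\big) = \sum_{i,j}(dM_t)_{ij}^2$, whose $dt$-coefficient is
$$\sum_{i,j,m}\E_x[v_i v_j v_m]\,\E_y[w_i w_j w_m] = \E_{x,y\sim p_t}\Big[\Big(\textstyle\sum_i v_i w_i\Big)\Big(\sum_j v_j w_j\Big)\Big(\sum_m v_m w_m\Big)\Big] = \E_{x,y\sim p_t}\big[\langle x-\mu_t, y-\mu_t\rangle^3\big],$$
where the triple sum telescopes: the sums over $i$, $j$, and $m$ each produce one factor of $\langle v,w\rangle$. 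Adding the three pieces yields the stated formula.

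I expect the main obstacle to be precisely this bookkeeping in the quadratic-variation term — recognizing that squaring the stochastic integral defining $dM_t$ turns a single expectation over $x\sim p_t$ into a product of two independent expectations, and that the index contractions then collapse to $\langle x-\mu_t, y-\mu_t\rangle^3$. One also needs the (routine) justification that Itô's formula may be applied to the matrix-valued semimartingale $A_t$; this rests on Lemma \ref{lem:exist} and the compact support of $p$, which guarantees that all the moments appearing above are finite. Everything else is linear algebra together with the standard Itô calculus recalled in the preliminaries, and indeed the same computation, done once for general $C_t$, gives the common generalization in Lemma \ref{lem:trace_generalized}.
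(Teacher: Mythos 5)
Your proposal is correct and follows essentially the same approach as the paper: apply It\^o's formula to the trace, feed in $dA_t$ from Lemma~\ref{lem:dA}, and compute the quadratic covariation, whose index contractions collapse to $\langle x-\mu_t,y-\mu_t\rangle^3$ upon introducing an independent copy. The only difference is that the paper proves the common generalization Lemma~\ref{lem:trace_generalized} (potential $\tr((A_t-\gamma I)^q)$ with general $C_t$) and obtains this statement as the case $q=2$, $\gamma=0$, $C_t=I$, whereas you specialize first and run the same computation directly --- exactly as you note at the end.
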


\begin{lem}
\label{lem:trace}Given a logconcave distribution $p$ with covariance
matrix $A$. Let $A_{t}$ defined by Definition \ref{def:A} using
initial distribution $p$. There is a universal constant $c_{1}$
such that
\[
\P(\max_{t\in[0,T]}\tr(A_{t}^{2})\geq8\tr(A_{0}^{2}))\leq0.01\quad\text{with}\quad T=\frac{c_{1}}{\sqrt{\tr(A_{0}^{2})}}.
\]
\end{lem}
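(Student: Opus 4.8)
The plan is to control the scalar process $\Phi_t \defeq \tr(A_t^2)$ via the It\^o formula from Lemma~\ref{lem:trace_itos}, specialized to $C_t = I$. That lemma gives
\[
d\Phi_t = 2\,\E_{x\sim p_t}(x-\mu_t)^T A_t (x-\mu_t)(x-\mu_t)^T dW_t - 2\tr(A_t^3)\,dt + \E_{x,y\sim p_t}\left((x-\mu_t)^T(y-\mu_t)\right)^3 dt.
\]
The drift is the sum of a nonpositive term $-2\tr(A_t^3)\le 0$ and a cubic term that, by Lemma~\ref{lem:tensor_norm} applied to $p_t$, is $O(\tr(A_t^2)^{3/2}) = O(\Phi_t^{3/2})$. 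So while $\Phi_t \le 8n$, the drift is bounded by $O((8n)^{3/2}) = O(n^{3/2})$. Over a time horizon $T = c_1/\sqrt{n}$ the total drift contribution is $O(n^{3/2}\cdot n^{-1/2}) = O(n)$, which by choosing $c_1$ small enough is at most, say, $n$. Thus the drift alone cannot push $\Phi_t$ from its initial value $n$ up to $8n$; the danger is entirely from the martingale part.

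Next I would bound the quadratic variation of the martingale term $M_t$ with $dM_t = 2\,\E_{x\sim p_t}(x-\mu_t)^T A_t (x-\mu_t)(x-\mu_t)^T dW_t$. We have $d[M]_t = 4\,\bigl\|\E_{x\sim p_t}(x-\mu_t)^T A_t (x-\mu_t)(x-\mu_t)\bigr\|_2^2\,dt$, and Lemma~\ref{lem:tensorestimate} with $B = A_t$ and $C = A_t$ bounds the vector norm inside by $O\!\left(\|A_t^{1/2}A_t A_t^{1/2}\|_2^{1/2}\,\tr|A_t^{1/2}A_t A_t^{1/2}|\right) = O\!\left(\|A_t\|_2^{3/2}\,\tr(A_t^2)\right)$. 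Since $\|A_t\|_2 \le \tr(A_t^2)^{1/2} = \Phi_t^{1/2}$, this gives $d[M]_t = O(\Phi_t^{3/2}\cdot \Phi_t^2)\,dt = O(\Phi_t^{7/2})\,dt$. On the event $\{\max_{s\le t}\Phi_s \le 8n\}$ this is $O(n^{7/2})\,dt$, so the quadratic variation accumulated up to the stopping time $\tau = \inf\{t: \Phi_t \ge 8n\}\wedge T$ is $[M]_\tau = O(n^{7/2}\cdot c_1/\sqrt n) = O(c_1 n^3)$.

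To finish, I would run the standard stopping-time argument. Let $\tau$ be as above and suppose for contradiction that $\P(\Phi_\tau \ge 8n) > 0.01$ (equivalently $\P(\max_{t\in[0,T]}\Phi_t \ge 8n) > 0.01$). On $\{\tau < T,\ \Phi_\tau \ge 8n\}$ we have $\Phi_\tau - \Phi_0 \ge 7n$ while the drift contributed at most $n$, so the martingale increment $M_\tau - M_0 \ge 6n$. By the Dambis--Dubins--Schwarz theorem (Theorem~\ref{thm:Dubins}), $M_\tau - M_0 = \widetilde W_{[M]_\tau}$ for a Wiener process $\widetilde W$, and $[M]_\tau \le K c_1 n^3$ for an absolute constant $K$. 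Hence
\[
0.01 < \P\bigl(\sup_{0\le s\le K c_1 n^3}\widetilde W_s \ge 6n\bigr) = 2\,\P\bigl(\widetilde W_{K c_1 n^3} \ge 6n\bigr) = 2\,\P\!\left(Z \ge \frac{6n}{\sqrt{K c_1 n^3}}\right) = 2\,\P\!\left(Z \ge \frac{6}{\sqrt{K c_1 n}}\right),
\]
using the reflection principle (Lemma~\ref{lem:reflection}) and $Z \sim N(0,1)$. Wait — this bound weakens as $n$ grows, which is a problem; the right scaling is recovered by noting $6/\sqrt{K c_1 n} \to 0$, so this naive estimate is not strong enough and I should instead keep the dependence on $n$ honest. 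The fix is that the deviation needed is $\ge 6n$ but the standard deviation of the martingale is only $O(\sqrt{c_1}\,n^{3/2})$, so the normalized deviation is $6n / (O(\sqrt{c_1}) n^{3/2}) = O(1/(\sqrt{c_1 n}))$ — which indeed goes to zero. So this route as stated fails, and the genuine main obstacle is to get a martingale variation bound that scales like $n^2$ rather than $n^3$: one must use the nonpositive $-2\tr(A_t^3)$ drift to absorb part of the fluctuation, or bound $[M]_t$ more carefully (e.g.\ via $\tr(A_t^3) \le \|A_t\|_2 \tr(A_t^2)$ and a Lieb--Thirring-type estimate showing the tensor term is really $O(\|A_t\|_2^{1/2}\tr(A_t^2)^{3/2})$ with the spectral norm, not the trace norm, appearing). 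With $d[M]_t = O(\|A_t\|_2\,\tr(A_t^2)^2)\,dt$ one gets $[M]_\tau = O(\sqrt{8n}\cdot (8n)^2 \cdot c_1/\sqrt n) = O(c_1 n^2)$, the required deviation probability becomes $2\,\P(Z \ge 6n/(O(\sqrt{c_1})n)) = 2\,\P(Z \ge 6/O(\sqrt{c_1}))$, and choosing $c_1$ small makes this $< 0.01$, completing the proof. So the crux is the sharp bound $d[M]_t = O(\|A_t\|_2\,\tr(A_t^2)^2)\,dt$, proved by refining Lemma~\ref{lem:tensorestimate} to track $\|A_t\|_2$ carefully.
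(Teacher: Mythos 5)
Your plan is structurally the same as the paper's: apply Lemma~\ref{lem:trace_itos} with $C_{t}=I$, bound the drift by Lemma~\ref{lem:tensor_norm} as $O(\Phi_{t}^{3/2})$, bound the martingale's quadratic variation via Lemma~\ref{lem:tensorestimate}, and close with Dambis--Dubins--Schwarz plus the reflection principle. The one genuine difference in mechanism is cosmetic: the paper tracks the transformed process $f(\Phi_{t})=-1/\sqrt{\Phi_{t}+n}$ so that after It\^o the drift of $f(\Phi_{t})$ is bounded by a universal constant and the stochastic part of $f(\Phi_{t})$ accumulates variation $O(T/\sqrt{n})$ directly, whereas you stop at level $8n$ and compare the raw martingale increment to a Brownian deviation. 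Both routes give the same conclusion with the same threshold $T=\Theta(1/\sqrt{n})$.

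The place where your write-up goes wrong---and where your self-correction, while pointing at the right target, is also not quite right---is the application of Lemma~\ref{lem:tensorestimate}. The martingale vector is
\[
v_{t}=2\,\E_{x\sim p_{t}}\bigl[(x-\mu_{t})^{T}A_{t}(x-\mu_{t})\bigr](x-\mu_{t}),
\]
which has \emph{no} $B^{1/2}$ prefactor in front of the expectation; the correct specialization of Lemma~\ref{lem:tensorestimate} is therefore $B=I$, $C=A_{t}$, not $B=A_{t}$, $C=A_{t}$. With $B=I$ the lemma gives directly
\[
\norm{v_{t}}_{2}=O\!\left(\norm{A_{t}}_{2}^{1/2}\tr(A_{t}^{2})\right)=O\!\left(\Phi_{t}^{5/4}\right),
\qquad
d[M]_{t}=O\!\left(\norm{A_{t}}_{2}\,\tr(A_{t}^{2})^{2}\right)dt=O\!\left(\Phi_{t}^{5/2}\right)dt,
\]
which on the good event $\{\Phi_{s}\le 8n\text{ for }s\le\tau\}$ is $O(n^{5/2})\,dt$, hence $[M]_{\tau}=O(c_{1}n^{2})$. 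This is \emph{exactly} the ``$\norm{A_{t}}_{2}^{1/2}\tr(A_{t}^{2})$'' bound you conjecture at the end as needing a ``refinement'' of Lemma~\ref{lem:tensorestimate}; no refinement is needed, since the existing lemma already puts the spectral norm (not a trace norm) in the first factor. Your intermediate arithmetic is also off: with $B=A_{t}$, $\norm{A_{t}^{1/2}A_{t}A_{t}^{1/2}}_{2}^{1/2}=\norm{A_{t}^{2}}_{2}^{1/2}=\norm{A_{t}}_{2}$, not $\norm{A_{t}}_{2}^{3/2}$, so even the wrong specialization would not give the $\Phi_{t}^{7/2}$ you report. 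Once Lemma~\ref{lem:tensorestimate} is applied with $B=I$, your stopping-time calculation (drift contribution $O(c_{1}n)$, quadratic variation $O(c_{1}n^{2})$, required martingale deviation $6n$, normalized deviation $6/O(\sqrt{c_{1}})$, then choose $c_{1}$ small) is correct and complete, and is essentially equivalent to the paper's argument.
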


\begin{proof}
Let $\Phi_{t}=\tr A_{t}^{2}$. By Lemma \ref{lem:trace_itos}, we
have that
\begin{align}
d\Phi_{t}= & -2\tr(A_{t}^{3})dt+\E_{x,y\sim p_{t}}\left((x-\mu_{t})^{T}(y-\mu_{t})\right)^{3}dt+2\E_{x\sim p_{t}}(x-\mu_{t})^{T}A_{t}(x-\mu_{t})(x-\mu_{t})^{T}dW_{t}\nonumber \\
\defeq & \delta_{t}dt+v_{t}^{T}dW_{t}.\label{eq:dPhi}
\end{align}

For the drift term $\delta_{t}dt$, Lemma \ref{lem:tensor_norm} shows
that
\begin{equation}
\delta_{t}\le\E_{x,y\sim p_{t}}\left((x-\mu_{t})^{T}(y-\mu_{t})\right)^{3}=O\left(\tr\left(A_{t}^{2}\right)^{3/2}\right)\leq C'\Phi_{t}^{3/2}\label{eq:alpha_bound}
\end{equation}
for some universal constant $C'$. Note that we dropped the term $-2\tr(A_{t}^{3})$
since $A_{t}$ is positive semidefinite and therefore the term is
negative.

For the martingale term $v_{t}^{T}dW_{t}$, we note that
\[
\norm{v_{t}}_{2}=\norm{\E_{x\sim p_{t}}(x-\mu_{t})^{T}A_{t}(x-\mu_{t})(x-\mu_{t})}_{2}\overset{\lref{tensorestimate}}{\leq}\norm{A_{t}}_{\op}^{1/2}\tr A_{t}^{2}\leq\Phi_{t}^{5/4}.
\]
So the drift term grows roughly as $\Phi^{3/2}t$ while the stochastic
term grows as $\Phi_{t}^{5/4}\sqrt{t}.$ Thus, both bounds (on the
drift term and the stochastic term) suggest that for $t$ up to $O\left(\frac{1}{\sqrt{\Phi_{0}}}\right)$,
the potential $\Phi_{t}$ remains $O(\Phi_{0})$. We now formalize
this, by decoupling the two terms. 

Let $f(a)=-\frac{1}{\sqrt{a+\Phi_{0}}}.$ By (\ref{eq:dPhi}) and
Itô's formula, we have that
\begin{align}
df(\Phi_{t}) & =f'(\Phi_{t})d\Phi_{t}+\frac{1}{2}f''(\Phi_{t})d[\Phi]_{t}=\left(\frac{1}{2}\frac{\delta_{t}}{(\Phi_{t}+\Phi_{0})^{3/2}}-\frac{3}{8}\frac{\norm{v_{t}}_{2}^{2}}{(\Phi_{t}+\Phi_{0})^{5/2}}\right)dt+\frac{1}{2}\frac{v_{t}^{T}dW_{t}}{(\Phi_{t}+\Phi_{0})^{3/2}}\nonumber \\
 & \leq C'dt+dY_{t}\label{eq:dpsi4}
\end{align}
where $dY_{t}=\frac{1}{2}\frac{v_{t}^{T}dW_{t}}{(\Phi_{t}+\Phi_{0})^{3/2}}$,
$Y_{t}=0$ and $C'$ is the universal constant in (\ref{eq:alpha_bound}).

Note that
\[
\frac{d[Y]_{t}}{dt}=\frac{1}{4}\frac{\norm{v_{t}}_{2}^{2}}{(\Phi_{t}+\Phi_{0})^{3}}=O(1)\frac{\Phi^{5/2}}{(\Phi_{t}+\Phi_{0})^{3}}\leq\frac{C}{\sqrt{\Phi_{0}}}.
\]
By Theorem \ref{thm:Dubins}, there exists a Wiener process $\tilde{W}_{t}$
such that $Y_{t}$ has the same distribution as $\tilde{W}_{[Y]_{t}}$.
Using the reflection principle for 1-dimensional Brownian motion,
we have that
\[
\P(\max_{t\in[0,T]}Y_{t}\geq\gamma)\leq\P(\max_{t\in[0,\frac{C}{\sqrt{\Phi_{0}}}T]}\tilde{W}_{t}\geq\gamma)=2\P(\tilde{W}_{\frac{C}{\sqrt{\Phi_{0}}}T}\geq\gamma)\leq2\exp(-\frac{\gamma^{2}\sqrt{\Phi_{0}}}{2CT}).
\]
Since $f(\Phi_{0})=-\frac{1}{\sqrt{2\Phi_{0}}}$, (\ref{eq:dpsi4})
shows that
\[
\P(\max_{t\in[0,T]}f(\Phi_{t})\geq-\frac{1}{\sqrt{2\Phi_{0}}}+C'T+\gamma)\leq2\exp(-\frac{\gamma^{2}\sqrt{\Phi_{0}}}{2CT}).
\]

Putting $T=\frac{1}{256(C'+C)\sqrt{\Phi_{0}}}$ and $\gamma=\frac{1}{4\sqrt{\Phi_{0}}}$,
we have that
\[
\P(\max_{t\in[0,T]}f(\Phi_{t})\geq-\frac{1}{3\sqrt{\Phi_{0}}})\leq2\exp(-8)).
\]
Note that $f(\Phi_{t})\geq-\frac{1}{3\sqrt{\Phi_{0}}}$ if and only
if $\Phi_{t}\geq8\Phi_{0}$. Hence, $\P(\max_{t\in[0,T]}\Phi_{t}\geq8\Phi_{0})\leq0.01.$
\end{proof}

\subsection{Proof of Theorem \ref{thm:n14}}
\begin{proof}[Proof of Theorem \ref{thm:n14}.]
By Lemma \ref{lem:trace}, we have that
\[
\P(\max_{s\in[0,t]}\tr A_{s}^{2}\leq8\tr A_{0}^{2})\geq0.99\quad\text{with}\quad t=\frac{c_{1}}{\sqrt{\tr A_{0}^{2}}}.
\]
Since $\tr A_{t}^{2}\leq8\tr A_{0}^{2}$ implies that $\norm{A_{t}}_{\op}\le\sqrt{8\tr A_{0}^{2}}$,
we have that $\P(\int_{0}^{T}\norm{A_{s}}_{\op}ds\leq\frac{1}{64})\geq0.99$
where $T=\min\left\{ \frac{1}{64\sqrt{8}},c_{1}\right\} \frac{1}{\sqrt{\tr A_{0}^{2}}}$.
Now the theorem follows from Lemma \ref{lem:boundAgivesKLS}.
\end{proof}

\subsection{Localization proofs\label{sec:Localization-proofs}}

We begin with the proof of the infinitesimal change in the density.
\begin{proof}[Proof of Lemma \ref{lem:def-pt}.]
 Let $q_{t}(x)=e^{c_{t}^{T}x-\frac{t}{2}\norm x^{2}}p(x)$. By Itô's
formula, applied to $f(a,t)\defeq e^{a-\frac{t}{2}\norm x^{2}}p(x)$
with $a=c_{t}^{T}x$, we have that
\begin{align*}
dq_{t}(x) & =\frac{df(a,t)}{da}dc_{t}^{T}x+\frac{df(a,t)}{dt}dt+\frac{1}{2}\frac{d^{2}f(a,t)}{da^{2}}d[c_{t}^{T}x]_{t}+\frac{1}{2}\frac{d^{2}f(a,t)}{dt^{2}}d[t]_{t}+\frac{1}{2}\cdot2\cdot\frac{d^{2}f(a,t)}{dadt}d[c_{t}^{T}x,t]_{t}\\
 & =\left(dc_{t}^{T}x-\frac{1}{2}\norm x_{2}^{2}dt+\frac{1}{2}d[c_{t}^{T}x]_{t}\right)q_{t}(x).
\end{align*}
By the definition of $c_{t}$, we have $dc_{t}^{T}x=\left\langle dW_{t}+\mu_{t}dt,x\right\rangle $.
The quadratic variation of $c_{t}^{T}x$ is $d[c_{t}^{T}x]_{t}=\left\langle x,x\right\rangle dt.$
The other two quadratic variation terms are zero. Therefore, this
gives 
\begin{align}
dq_{t}(x) & =\left\langle dW_{t}+\mu_{t}dt,x\right\rangle q_{t}(x).\label{eq:dq}
\end{align}

Let $V_{t}=\int_{\Rn}q_{t}(y)dy$. Then, we have
\[
dV_{t}=\int_{\Rn}dq_{t}(y)dy=\int_{\Rn}\left\langle dW_{t}+\mu_{t}dt,y\right\rangle q_{t}(y)dy=V_{t}\left\langle dW_{t}+\mu_{t}dt,\mu_{t}\right\rangle .
\]
By Itô's formula, we have that
\begin{align}
dV_{t}^{-1} & =-\frac{1}{V_{t}^{2}}dV_{t}+\frac{1}{V_{t}^{3}}d[V]_{t}=-V_{t}^{-1}\left\langle dW_{t}+\mu_{t}dt,\mu_{t}\right\rangle +V_{t}^{-1}\left\langle \mu_{t},\mu_{t}\right\rangle dt=-V_{t}^{-1}\left\langle dW_{t},\mu_{t}\right\rangle .\label{eq:dVinv}
\end{align}

Combining (\ref{eq:dq}) and (\ref{eq:dVinv}), we have that
\[
dp_{t}(x)=d(V_{t}^{-1}q_{t}(x))=q_{t}(x)dV_{t}^{-1}+V_{t}^{-1}dq_{t}(x)+d[V_{t}^{-1},q_{t}(x)]_{t}=p_{t}(x)\left\langle dW_{t},x-\mu_{t}\right\rangle .
\]
\end{proof}
The next proof is for the change in the covariance matrix.
\begin{proof}[Proof of Lemma \ref{lem:dA}.]
 Recall that
\[
A_{t}=\int_{\Rn}(x-\mu_{t})(x-\mu_{t})^{T}p_{t}(x)dx.
\]
Viewing $A_{t}=f(\mu_{t},p_{t})$, i.e., as a function of the variables
$\mu_{t}$ and $p_{t}$, we apply Itô's formula. In the derivation
below, we use $[\mu_{t},\mu_{t}^{T}]_{t}$ to denote the matrix whose
$i,j$ coordinate is $[\mu_{t,i},\mu_{t,j}]_{t}$. Similarly, $[\mu_{t},p_{t}(x)]_{t}$
is a column vector and $[\mu_{t}^{T},p_{t}(x)]_{t}$ is a row vector.
\begin{align*}
dA_{t}= & \int_{\Rn}(x-\mu_{t})(x-\mu_{t})^{T}dp_{t}(x)dx-\int_{\Rn}d\mu_{t}(x-\mu_{t})^{T}p_{t}(x)dx-\int_{\Rn}(x-\mu_{t})(d\mu_{t})^{T}p_{t}(x)dx\\
 & -\frac{1}{2}\cdot2\int_{\Rn}(x-\mu_{t})d[\mu_{t}^{T},p_{t}(x)]_{t}dx-\frac{1}{2}\cdot2\int_{\Rn}d[\mu_{t},p_{t}(x)]_{t}(x-\mu_{t})^{T}dx+\frac{1}{2}\cdot2d[\mu_{t},\mu_{t}^{T}]_{t}\int_{\Rn}p_{t}(x)dx.
\end{align*}
where the factor 2 comes from the Hessians of $x^{2}$ and $xy$.
Now the second term vanishes because
\[
\int_{\Rn}d\mu_{t}(x-\mu_{t})^{T}p_{t}(x)dx=d\mu_{t}(\int_{\Rn}(x-\mu_{t})p_{t}(x)dx)^{T}=0.
\]
Similarly, the third term also vanishes:$\int_{\Rn}(x-\mu_{t})(d\mu_{t})^{T}p_{t}(x)dx=0.$

To compute the last 3 terms, we note that
\begin{align*}
d\mu_{t} & =d\int_{\Rn}xp_{t}(x)dx=\int_{\Rn}x(x-\mu_{t})^{T}dW_{t}p_{t}(x)dx\\
 & =\int_{\Rn}(x-\mu_{t})(x-\mu_{t})^{T}dW_{t}p_{t}(x)dx+\int_{\Rn}\mu_{t}(x-\mu_{t})^{T}dW_{t}p_{t}(x)dx=A_{t}dW_{t}.
\end{align*}
Therefore, we have for the last term
\[
\left(d[\mu_{t},\mu_{t}^{T}]_{t}\right)_{ij}=\sum_{\ell}\left(A_{t}\right)_{i\ell}\left(A_{t}\right)_{j\ell}dt=(A_{t}A_{t}^{T})_{ij}dt=(A_{t}^{2})_{ij}dt
\]
which we can simply write as $d[\mu_{t},\mu_{t}^{T}]_{t}=A_{t}^{2}dt$.
Similarly, we have $d[\mu_{t},p_{t}(x)]_{t}=p_{t}(x)A_{t}(x-\mu_{t})dt.$
This gives the fourth term
\[
\int_{\Rn}(x-\mu_{t})d[\mu_{t}^{T},p_{t}(x)]_{t}dx=\int_{\Rn}(x-\mu_{t})(x-\mu_{t})^{T}A_{t}p_{t}(x)dtdx=A_{t}^{2}dt.
\]
Similarly, we have the fifth term $\int_{\Rn}d[\mu_{t},p_{t}(x)]_{t}(x-\mu_{t})^{T}dx=A_{t}^{2}dt$.
Combining all the terms, we have that
\begin{align*}
dA_{t}= & \int_{\Rn}(x-\mu_{t})(x-\mu_{t})^{T}dp_{t}(x)dx-A_{t}^{2}dt.\qedhere
\end{align*}
\end{proof}
Next is the proof of stochastic derivative of the potential $\Phi_{t}=\tr(A_{t}^{2})$.
\begin{proof}[Proof of Lemma \ref{lem:trace_itos}]
Let $\Phi(X)=\tr(X^{2})$. Then the first and second-order directional
derivatives of $\Phi$ at $X$ is given by $\left.\frac{\partial\Phi}{\partial X}\right|_{H}=2\tr(XH)$
and $\left.\frac{\partial^{2}\Phi}{\partial X\partial X}\right|_{H_{1},H_{2}}=2\tr(H_{1}H_{2})$.
Using these and Itô's formula, we have that 
\[
d\tr(A_{t}^{2})=2\tr(A_{t}dA_{t})+\sum_{ij}d[A_{ij},A_{ji}]_{t}
\]
 where $A_{ij}$ is the real-valued stochastic process defined by
the $(i,j)^{th}$ entry of $A_{t}$. Using Lemma \ref{lem:dA} and
Lemma \ref{lem:def-pt}, we have that

\begin{align}
dA_{t} & =\sum_{z}\E_{x\sim p_{t}}(x-\mu_{t})(x-\mu_{t})^{T}(x-\mu_{t})_{z}dW_{t,z}-A_{t}^{2}dt\label{eq:dAz2_gen}
\end{align}
where $W_{t,z}$ is the $z^{th}$ coordinate of $W_{t}$. Therefore,
\begin{align}
d[A_{ij},A_{ji}]_{t} & =\sum_{z}\left(\E_{x\sim p_{t}}(x-\mu_{t})_{i}(x-\mu_{t})_{j}(x-\mu_{t})^{T}e_{z}\right)\left(\E_{x\sim p_{t}}(x-\mu_{t})_{j}(x-\mu_{t})_{i}(x-\mu_{t})^{T}e_{z}\right)dt\nonumber \\
 & =\E_{x,y\sim p_{t}}(x-\mu_{t})_{i}(x-\mu_{t})_{j}(y-\mu_{t})_{j}(y-\mu_{t})_{i}(x-\mu_{t})^{T}(y-\mu_{t})dt.\label{eq:dAv2_gen}
\end{align}
Using the formula for $dA_{t}$ (\ref{eq:dAz2_gen}) and $d[A_{ij},A_{ji}]_{t}$
(\ref{eq:dAv2_gen}), we have that
\begin{align*}
d\tr(A_{t}^{2})= & 2\E_{x\sim p_{t}}(x-\mu_{t})^{T}A_{t}(x-\mu_{t})(x-\mu_{t})^{T}dW_{t}-2\tr(A_{t}^{3})dt\\
 & +\sum_{ij}\E_{x,y\sim p_{t}}(x-\mu_{t})_{i}(x-\mu_{t})_{j}(y-\mu_{t})_{j}(y-\mu_{t})_{i}(x-\mu_{t})^{T}(y-\mu_{t})dt\\
= & 2\E_{x\sim p_{t}}(x-\mu_{t})^{T}A_{t}(x-\mu_{t})(x-\mu_{t})^{T}dW_{t}-2\tr(A_{t}^{3})dt+\E_{x,y\sim p_{t}}((x-\mu_{t})^{T}(y-\mu_{t}))^{3}dt.\qedhere
\end{align*}
\end{proof}

\section{Controlling $A_{t}$ via a Stieltjes potential}

The goal of this section is to prove the following theorem, which
implies Theorem \ref{thm:logsob}.
\begin{thm}
\label{lem:expand_p}Given an isotropic logconcave distribution $p$
with support of diameter $D$. Then, for any measurable subset $S$,
\[
p(\partial S)\geq\Omega\left(\frac{\log\frac{1}{p(S)}}{D}+\sqrt{\frac{\log\frac{1}{p(S)}}{D}}\right)p(S).
\]
\end{thm}

\subsection{Bounding the spectral norm of the covariance matrix}

\label{subsec:Bounding-the-spectral}The main lemma of this section
is the following. 
\begin{lem}
\label{lem:norm_At}Assume that for $k\geq1$, $\psi_{p}\lesssim\left(\tr A^{k}\right)^{1/2k}$
for all logconcave distribution $p$ with covariance $A$. There is
some universal constant $c\geq0$ such that for any $0\leq T\leq\frac{1}{c\cdot kn^{1/k}}$,
we have that
\[
\P(\max_{t\in[0,T]}\norm{A_{t}}_{\spe}\geq2)\leq2\exp(-\frac{1}{cT}).
\]
\end{lem}

\begin{rem*}
Theorem \ref{thm:n14} proved that the assumption holds with $k=2$.
\end{rem*}
We defer the proof of the following lemma to the end of this section. 
\begin{lem}
\label{lem:stoc_du}Let $u_{t}=u(A_{t})$ (see Definition \ref{eq:def_u}).
We have
\[
du(A_{t})=\alpha_{t}^{T}dW_{t}+\beta_{t}dt
\]
where
\begin{align*}
\alpha_{t}= & \frac{1}{\kappa_{t}}\E_{x\sim\tilde{p}_{t}}x^{T}(uI-A_{t})^{-(q+1)}x\cdot x,\\
\frac{\beta_{t}}{q+1}\leq & \frac{1}{2\kappa_{t}}\E_{x,y\sim\tilde{p}_{t}}x^{T}(uI-A_{t})^{-1}y\cdot x^{T}(uI-A_{t})^{-(q+1)}y\cdot x^{T}y\\
 & -\frac{1}{\kappa_{t}^{2}}\E_{x,y\sim\tilde{p}_{t}}x^{T}(uI-A_{t})^{-(q+1)}x\cdot y^{T}(uI-A_{t})^{-(q+2)}y\cdot x^{T}y\\
 & +\frac{1}{2\kappa_{t}^{3}}\tr((uI-A_{t})^{-(q+2)})\cdot\E_{x,y\sim\tilde{p}_{t}}x^{T}(uI-A_{t})^{-(q+1)}x\cdot y^{T}(uI-A_{t})^{-(q+1)}y\cdot x^{T}y,\\
\kappa_{t}= & \tr((uI-A_{t})^{-(q+1)})
\end{align*}
and $\tilde{p}_{t}$ be the translation of $p_{t}$ defined by $\tilde{p}_{t}(x)=p_{t}(x+\mu_{t})$.
\end{lem}

To estimate $\alpha_{t}$, we can use Lemma \ref{lem:tensorestimate}.
To estimate $\beta_{t}$, we prove the following bound that crucially
uses the KLS bound for non-isotropic logconcave distribution (the
assumption in Lemma \ref{lem:norm_At}).
\begin{lem}
\label{lem:tensor_bound}Under the assumption as Lemma \ref{lem:norm_At}.
Given a logconcave distribution $p$ with mean $\mu$ and covariance
$A$. For any $B^{(1)},B^{(2)},B^{(3)}\succeq0$, 
\begin{align*}
 & \left|\E_{x,y\sim p}(x-\mu)^{T}B^{(1)}(y-\mu)\cdot(x-\mu)^{T}B^{(2)}(y-\mu)\cdot(x-\mu)^{T}B^{(3)}(y-\mu)\right|\\
\leq & O(1)\cdot\tr(A^{\frac{1}{2}}B^{(1)}A^{\frac{1}{2}})\cdot\left(\tr(A^{\frac{1}{2}}B^{(2)}A^{\frac{1}{2}})^{k}\right)^{1/k}\cdot\norm{A^{\frac{1}{2}}B^{(3)}A^{\frac{1}{2}}}_{\spe}.
\end{align*}
\end{lem}

\begin{proof}
Without loss of generality, we can assume $p$ is isotropic. Furthermore,
we can assume $B^{(1)}$ is diagonal. Let $\Delta^{(k)}=\E_{x\sim p}xx^{T}\cdot x^{T}e_{k}$.
Then, we have that 
\begin{align}
\E_{x,y\sim p}x^{T}B^{(1)}y\cdot x^{T}B^{(2)}y\cdot x^{T}B^{(3)}y & =\sum_{k}B_{kk}^{(1)}\tr(B^{(2)}\Delta^{(k)}B^{(3)}\Delta^{(k)})\nonumber \\
 & \leq\norm{B^{(3)}}_{\spe}\sum_{k}B_{kk}^{(1)}\tr(\Delta^{(k)}B^{(2)}\Delta^{(k)}).\label{eq:tensor_bound}
\end{align}
Note that
\begin{align*}
\tr(\Delta^{(k)}B^{(2)}\Delta^{(k)}) & =\E_{x\sim p}x^{T}B^{(2)}\Delta^{(k)}x\cdot x_{k}\\
 & \leq\sqrt{\E x_{k}^{2}}\sqrt{\Var_{x\sim p}x^{T}B^{(2)}\Delta^{(k)}x}\\
 & =\sqrt{\Var_{y\sim\tilde{p}}y^{T}\left(B^{(2)}\right)^{\frac{1}{2}}\Delta^{(k)}\left(B^{(2)}\right)^{-\frac{1}{2}}y}
\end{align*}
where $\tilde{p}$ is the distribution given by $\left(B^{(2)}\right)^{\frac{1}{2}}x$
where $x\sim p$. Note that $\tilde{p}$ is a logconcave distribution
with mean $0$ and covariance $B^{(2)}$. Theorem \ref{thm:Poincare}
together with the assumption that $\psi_{p}\lesssim\left(\tr A^{k}\right)^{-1/2k}$,
we have
\[
\Var_{y\sim\tilde{p}}y^{T}\left(B^{(2)}\right)^{\frac{1}{2}}\Delta^{(k)}\left(B^{(2)}\right)^{-\frac{1}{2}}y\apprle\left(\tr(B^{(2)})^{k}\right)^{1/k}\cdot\E_{y\sim\tilde{p}}\norm{\left(B^{(2)}\right)^{\frac{1}{2}}\Delta^{(k)}\left(B^{(2)}\right)^{-\frac{1}{2}}y}^{2}
\]
Hence, we have that
\begin{align*}
\tr(\Delta^{(k)}B^{(2)}\Delta^{(k)}) & \leq\left(\tr(B^{(2)})^{k}\right)^{1/2k}\sqrt{\E_{y\sim\tilde{p}}\norm{\left(B^{(2)}\right)^{\frac{1}{2}}\Delta^{(k)}\left(B^{(2)}\right)^{-\frac{1}{2}}y}^{2}}\\
 & =\left(\tr(B^{(2)})^{k}\right)^{1/2k}\sqrt{\E_{y\sim\tilde{p}}\tr\left(B^{(2)}\right)^{-\frac{1}{2}}\Delta^{(k)}B^{(2)}\Delta^{(k)}\left(B^{(2)}\right)^{-\frac{1}{2}}yy^{T}}\\
 & =\left(\tr(B^{(2)})^{k}\right)^{1/2k}\sqrt{\tr\Delta^{(k)}B^{(2)}\Delta^{(k)}}.
\end{align*}
Hence, we have that
\[
\tr(\Delta^{(k)}B^{(2)}\Delta^{(k)})\leq\left(\tr(B^{(2)})^{k}\right)^{1/k}.
\]
Putting it into (\ref{eq:tensor_bound}) gives that
\[
\left|\E_{x,y\sim p}x^{T}B^{(1)}y\cdot x^{T}B^{(2)}y\cdot x^{T}B^{(3)}y\right|\apprle\tr B^{(1)}\cdot\left(\tr(B^{(2)})^{k}\right)^{1/k}\cdot\norm{B^{(3)}}_{\spe}.
\]
\end{proof}
\begin{lem}
\label{lem:bound_on_alpha_beta}Under the assumption as Lemma \ref{lem:norm_At}.
Let $u_{t}=u(A_{t})$ with $q=\left\lceil k\right\rceil $ (see Definition
\ref{eq:def_u}), we have that
\[
du_{t}=\alpha_{t}^{T}dW_{t}+\beta_{t}dt
\]
with
\[
\norm{\alpha_{t}}_{2}\apprle u_{t}^{\frac{3}{2}}\quad\text{and}\quad\beta_{t}\apprle ku_{t}^{4}\Phi^{1/k}.
\]
\end{lem}

\begin{proof}
For $\alpha_{t}$, we use Lemma \ref{lem:tensorestimate} and get
that
\begin{align*}
\norm{\E_{x\sim\tilde{p}_{t}}x^{T}(u_{t}I-A_{t})^{-(q+1)}x\cdot x}_{2} & \apprle\norm{A_{t}}_{\spe}^{1/2}\tr(A_{t}(u_{t}I-A_{t})^{-(q+1)})\\
 & \leq u_{t}^{\frac{3}{2}}\kappa_{t}.
\end{align*}

For $\beta_{t}$, we bound each term separately. For the first term,
Lemma \ref{lem:tensor_bound} shows that
\begin{align*}
 & \E_{x,y\sim\tilde{p}_{t}}x^{T}(u_{t}I-A_{t})^{-1}y\cdot x^{T}(u_{t}I-A_{t})^{-(q+1)}y\cdot x^{T}y\\
\apprle & \tr(A_{t}^{\frac{1}{2}}(u_{t}I-A_{t})^{-(q+1)}A_{t}^{\frac{1}{2}})\cdot\left(\tr(A_{t}^{\frac{1}{2}}(u_{t}I-A_{t})^{-1}A_{t}^{\frac{1}{2}})^{k}\right)^{1/k}\cdot\norm{A_{t}}_{\spe}\\
\leq & \norm{A_{t}}_{\spe}^{3}\cdot\kappa_{t}\cdot\left(\tr(u_{t}I-A_{t})^{-k}\right)^{1/k}\\
\leq & u_{t}^{3}\cdot\kappa_{t}\cdot\left(\tr(u_{t}I-A_{t})^{-k}\right)^{1/k}.
\end{align*}
For the second term, we use Lemma \ref{lem:tensorestimate} and get
that
\begin{align*}
 & \E_{x,y\sim\tilde{p}_{t}}x^{T}(u_{t}I-A_{t})^{-(q+1)}x\cdot y^{T}(u_{t}I-A_{t})^{-(q+2)}y\cdot x^{T}y\\
\leq & \norm{\E_{x\sim\tilde{p}_{t}}x^{T}(u_{t}I-A_{t})^{-(q+1)}x\cdot x}_{2}\norm{\E_{y\sim\tilde{p}_{t}}y^{T}(u_{t}I-A_{t})^{-(q+2)}y\cdot y}_{2}\\
\apprle & \norm{A_{t}}_{\spe}^{1/2}\tr(A_{t}(u_{t}I-A_{t})^{-(q+1)})\cdot\norm{A_{t}}_{\spe}^{1/2}\tr(A_{t}(u_{t}I-A_{t})^{-(q+2)})\\
\leq & u_{t}^{3}\cdot\kappa_{t}\cdot\tr((u_{t}I-A_{t})^{-(q+2)}).
\end{align*}
For the third term, the same calculation shows that 
\begin{align*}
 & \tr((u_{t}I-A_{t})^{-(q+2)})\cdot\E_{x,y\sim\tilde{p}_{t}}x^{T}(u_{t}I-A_{t})^{-(q+1)}x\cdot y^{T}(u_{t}I-A_{t})^{-(q+1)}y\cdot x^{T}y\\
\apprle & u_{t}^{3}\cdot\tr((u_{t}I-A_{t})^{-(q+2)})\cdot\kappa_{t}^{2}.
\end{align*}
Combining all three terms, we have 
\begin{align*}
\beta_{t} & \apprle qu_{t}^{3}\left(\left(\tr(u_{t}I-A_{t})^{-k}\right)^{1/k}+\frac{\tr((u_{t}I-A_{t})^{-(q+2)})}{\tr((u_{t}I-A_{t})^{-(q+1)})}\right)\\
 & \apprle qu_{t}^{3}\left(\left(\tr(u_{t}I-A_{t})^{-k}\right)^{1/k}+\tr((u_{t}I-A_{t})^{-(q+2)})^{\frac{1}{q+2}}\right)\\
 & \apprle ku_{t}^{3}\left(\tr(u_{t}I-A_{t})^{-k}\right)^{1/k}
\end{align*}
where we used $q=\left\lceil k\right\rceil $ at the end. Next, we
note that 
\[
\tr(u_{t}I-A_{t})^{-k}\leq u_{t}^{q-k}\tr(u_{t}I-A_{t})^{-q}
\]
and hence $\left(\tr(u_{t}I-A_{t})^{-k}\right)^{1/k}\leq u_{t}\cdot\Phi^{1/k}$.
Therefore, we have the result.
\end{proof}
Now, we are ready to upper bound $\norm{A_{t}}_{\spe}$.
\begin{proof}[Proof of Lemma \ref{lem:norm_At}.]
 Consider the potential $\Psi_{t}=-(u_{t}+1)^{-3}$. Using Lemma
\ref{lem:bound_on_alpha_beta}, we have that
\begin{align*}
d\Psi_{t} & =3(u_{t}+1)^{-4}(\alpha_{t}^{T}dW_{t}+\beta_{t}dt)-6(u_{t}+1)^{-5}\norm{\alpha_{t}}^{2}dt\\
 & \defeq\gamma_{t}^{T}dW_{t}+\eta_{t}dt.
\end{align*}
Note that
\[
\norm{\gamma_{t}}_{2}^{2}=\norm{3(u_{t}+1)^{-4}\alpha_{t}}_{2}^{2}\leq O(1)(u_{t}+1)^{-8}u_{t}^{3}\leq c
\]
and
\[
\eta_{t}\leq3(u_{t}+1)^{-4}O(u_{t}^{4})k\Phi^{1/k}\leq ck\Phi^{1/k}
\]
for some universal constant $c$.

Let $Y_{t}$ be the process $dY_{t}=\gamma_{t}^{T}dW_{t}$. By Theorem
\ref{thm:Dubins}, there exists a Wiener process $\widetilde{W}_{t}$
such that $Y_{t}$ has the same distribution as $\widetilde{W}_{[Y]_{t}}$.
Using the reflection principle for 1-dimensional Brownian motion,
we have that
\[
\P(\max_{t\in[0,T]}Y_{t}\geq\gamma)\leq\P(\max_{t\in[0,cT]}\widetilde{W}_{t}\geq\gamma)=2\P(\widetilde{W}_{cT}\geq\gamma)\leq2\exp(-\frac{\gamma^{2}}{2cT}).
\]
Therefore, we have that
\[
\P(\max_{t\in[0,T]}\Psi_{t}-\Psi_{0}\geq ck\Phi^{1/k}T+\gamma)\leq2\exp(-\frac{\gamma^{2}}{2cT}).
\]
Set $\Phi=2^{-k}n$. At $t=0$, we have $\tr(u_{0}I-I)^{-k}=2^{-k}n$.
Therefore, $u_{0}=\frac{3}{2}$ and $\Psi_{0}=-\frac{8}{125}$. Using
the assumptions that $T\leq\frac{1}{125ck\Phi^{1/k}}$, we have that
\[
\P(\max_{t\in[0,T]}\left(-(u_{t}+1)^{-3}\right)\geq-\frac{1}{125}+\gamma)\leq2\exp(-\frac{\gamma^{2}}{2cT}).
\]
The result follows from setting $\gamma=\frac{1}{100}$.
\end{proof}

\subsection{Calculus with the Stieltjes potential}

Here we prove Lemma \ref{lem:stoc_du}.
\begin{lem}
\label{lem:derivative_u}We have that 
\[
Du(X)[H]=\frac{\tr((uI-X)^{-(q+1)}H)}{\tr((uI-X)^{-(q+1)})}
\]
and
\begin{align*}
D^{2}u(X)[H_{1},H_{2}]= & \frac{\sum_{k=1}^{q+1}\tr((uI-X)^{-k}H_{1}(uI-X)^{-(q+2-k)}H_{2})}{\tr((uI-X)^{-(q+1)})}\\
 & -(q+1)\frac{\tr((uI-X)^{-(q+1)}H_{1})\tr((uI-X)^{-(q+2)}H_{2})}{\tr((uI-X)^{-(q+1)})^{2}}\\
 & -(q+1)\frac{\tr((uI-X)^{-(q+1)}H_{2})\tr((uI-X)^{-(q+2)}H_{1})}{\tr((uI-X)^{-(q+1)})^{2}}\\
 & +(q+1)\frac{\tr((uI-X)^{-(q+1)}H_{1})\tr((uI-X)^{-(q+1)}H_{2})\tr((uI-X)^{-(q+2)})}{\tr((uI-X)^{-(q+1)})^{3}}.
\end{align*}
\end{lem}

\begin{proof}
Taking derivative of the equation (\ref{eq:def_u}), we have 
\[
-q\tr((uI-X)^{-(q+1)})\cdot Du(X)[H]+q\tr((uI-X)^{-(q+1)}H)=0.
\]
Therefore,
\[
Du(X)[H]=\frac{\tr((uI-X)^{-(q+1)}H)}{\tr((uI-X)^{-(q+1)})}.
\]
Taking derivative again on both sides,
\begin{align*}
D^{2}u(X)[H_{1},H_{2}]= & \frac{\sum_{k=1}^{q+1}\tr((uI-X)^{-k}H_{1}(uI-X)^{-(q+2-k)}H_{2})}{\tr((uI-X)^{-(q+1)})}\\
 & -\frac{(q+1)\tr((uI-X)^{-(q+1)}H_{1})\tr((uI-X)^{-(q+2)}H_{2})}{\tr((uI-X)^{-(q+1)})^{2}}\\
 & -(q+1)\frac{\tr((uI-X)^{-(q+2)}H_{1})}{\tr((uI-X)^{-(q+1)})}Du(X)[H_{2}]\\
 & +(q+1)\frac{\tr((uI-X)^{-(q+1)}H_{1})\tr((uI-X)^{-(q+2)})}{\tr((uI-X)^{-(q+1)})^{2}}Du(X)[H_{2}].
\end{align*}
Substituting the formula of $Du(X)[H_{2}]$ and organizing the term,
we have the result.
\end{proof}
To simplify the first term in the Hessian, we need the following Lemma:
\begin{lem}[\cite{Eldan2013}]
\label{lem:tensor_shift}For any positive definite matrix $A$, symmetric
matrix $\Delta$ and any $\alpha,\beta\geq0$, we have that
\[
\tr(A^{\alpha}\Delta A^{\beta}\Delta)\leq\tr(A^{\alpha+\beta}\Delta^{2}).
\]
\end{lem}

\begin{proof}
Without loss of generality, we can assume $A$ is diagonal by rotating
the space. Hence, we have that 
\begin{align*}
\tr(A^{\alpha}\Delta A^{\beta}\Delta) & =\sum_{i,j}A_{ii}^{\alpha}A_{jj}^{\beta}\Delta_{ij}^{2}\\
 & \leq\sum_{i,j}\left(\frac{\alpha}{\alpha+\beta}A_{ii}^{\alpha+\beta}+\frac{\beta}{\alpha+\beta}A_{jj}^{\alpha+\beta}\right)\Delta_{ij}^{2}\\
 & =\frac{\alpha}{\alpha+\beta}\sum_{i,j}A_{ii}^{\alpha+\beta}\Delta_{ij}^{2}+\frac{\beta}{\alpha+\beta}\sum_{i,j}A_{jj}^{\alpha+\beta}\Delta_{ij}^{2}\\
 & =\tr(A^{\alpha+\beta}\Delta^{2}).
\end{align*}
\end{proof}
Now, we are already to upper bound $du(A_{t})$.
\begin{proof}[Proof of Lemma \ref{lem:stoc_du}]
Using Lemma \ref{lem:derivative_u} and Itô's formula, we have that
\begin{align*}
du(A_{t})= & \frac{\tr((uI-A_{t})^{-(q+1)}dA_{t})}{\tr((uI-A_{t})^{-(q+1)})}\\
 & +\frac{1}{2}\sum_{ijkl}\frac{\sum_{k=1}^{q+1}\tr((uI-A_{t})^{-k}e_{ij}(uI-A_{t})^{-(q+2-k)}e_{kl})}{\tr((uI-A_{t})^{-(q+1)})}d[A_{ij},A_{kl}]_{t}\\
 & -\frac{q+1}{2}\sum_{ijkl}\frac{\tr((uI-A_{t})^{-(q+1)}e_{ij})\tr((uI-A_{t})^{-(q+2)}e_{kl})}{\tr((uI-A_{t})^{-(q+1)})^{2}}d[A_{ij},A_{kl}]_{t}\\
 & -\frac{q+1}{2}\sum_{ijkl}\frac{\tr((uI-A_{t})^{-(q+1)}e_{kl})\tr((uI-A_{t})^{-(q+2)}e_{ij})}{\tr((uI-A_{t})^{-(q+1)})^{2}}d[A_{ij},A_{kl}]_{t}\\
 & +\frac{q+1}{2}\sum_{ijkl}\frac{\tr((uI-A_{t})^{-(q+1)}e_{ij})\tr((uI-A_{t})^{-(q+1)}e_{kl})\tr((uI-A_{t})^{-(q+2)})}{\tr((uI-A_{t})^{-(q+1)})^{3}}d[A_{ij},A_{kl}]_{t}.
\end{align*}

For brevity, we let $\tilde{p}_{t}$ be the translation of $p_{t}$
that has mean $0$, i.e. $\tilde{p}_{t}(x)=p_{t}(x+\mu_{t})$. Let
$\Delta^{(z)}=\E_{x\sim\tilde{p}_{t}}xx^{T}x_{z}$. Then, Lemma \ref{lem:dA}
shows that $dA_{t}=\sum_{z}\Delta_{z}dW_{t,z}-A_{t}^{2}dt$ where
$W_{t,z}$ is the $z^{th}$ coordinate of $W_{t}$. Therefore, 
\begin{equation}
d[A_{ij},A_{kl}]_{t}=\sum_{z}\Delta_{ij}^{(z)}\Delta_{kl}^{(z)}dt.\label{eq:dA_bracket}
\end{equation}
Using the formula for $dA_{t}$ (Lemma \ref{lem:dA}) and $d[A_{ij},A_{kl}]_{t}$
(\ref{eq:dA_bracket}), we have that
\begin{align*}
du(A_{t})= & \frac{\tr\left((uI-A_{t})^{-(q+1)}\left(\E_{x\sim\tilde{p}_{t}}xx^{T}x^{T}dW_{t}-A_{t}^{2}dt\right)\right)}{\tr((uI-A_{t})^{-(q+1)})}\\
 & +\frac{1}{2}\sum_{z}\frac{\sum_{k=1}^{q+1}\tr((uI-A_{t})^{-k}\Delta^{(z)}(uI-A_{t})^{-(q+2-k)}\Delta^{(z)})}{\tr((uI-A_{t})^{-(q+1)})}dt\\
 & -(q+1)\sum_{z}\frac{\tr((uI-A_{t})^{-(q+1)}\Delta^{(z)})\tr((uI-A_{t})^{-(q+2)}\Delta^{(z)})}{\tr((uI-A_{t})^{-(q+1)})^{2}}dt\\
 & +\frac{q+1}{2}\sum_{z}\frac{\tr((uI-A_{t})^{-(q+1)}\Delta^{(z)})\tr((uI-A_{t})^{-(q+1)}\Delta^{(z)})\tr((uI-A_{t})^{-(q+2)})}{\tr((uI-A_{t})^{-(q+1)})^{3}}dt.
\end{align*}
Using Lemma \ref{lem:tensor_shift}, 
\[
\tr((uI-A_{t})^{-k}\Delta^{(z)}(uI-A_{t})^{-(q+2-k)}\Delta^{(z)})\leq\tr((uI-A_{t})^{-1}\Delta^{(z)}(uI-A_{t})^{-(q+1)}\Delta^{(z)})
\]
for all $1\leq k\leq q+1$. 

Let $\kappa_{t}=\tr((uI-A_{t})^{-(q+1)})$, then
\begin{align*}
du(A_{t})\leq & \frac{1}{\kappa_{t}}\E_{x\sim\tilde{p}_{t}}x^{T}(uI-A_{t})^{-(q+1)}xx^{T}dW_{t}\\
 & +\frac{q+1}{2\kappa_{t}}\E_{x,y\sim\tilde{p}_{t}}\sum_{z}\tr((uI-A_{t})^{-1}xx^{T}x_{z}(uI-A_{t})^{-(q+1)}yy^{T}y_{z})dt\\
 & -\frac{q+1}{\kappa_{t}^{2}}\E_{x,y\sim\tilde{p}_{t}}\sum_{z}\tr((uI-A_{t})^{-(q+1)}xx^{T}x_{z})\tr((uI-A_{t})^{-(q+2)}yy^{T}y_{z})dt\\
 & +\frac{q+1}{2\kappa_{t}^{3}}\E_{x,y\sim\tilde{p}_{t}}\sum_{z}\tr((uI-A_{t})^{-(q+1)}xx^{T}x_{z})\tr((uI-A_{t})^{-(q+1)}yy^{T}y_{z})\tr((uI-A_{t})^{-(q+2)})dt.
\end{align*}
Rearranging the terms, we have the result.
\end{proof}

\subsection{Bounding the size of any initial set}

Fix any set $E\subset\Rn$ and define $g_{t}=p_{t}(E)$.
\begin{lem}
\label{lem:volume} The random variable $g_{t}$ is a martingale satisfying
\[
d[g_{t}]_{t}\leq D^{2}g_{t}^{2}dt
\]
and
\[
d[g_{t}]_{t}\leq30\norm{A_{t}}_{\spe}\cdot g_{t}^{2}\log^{2}\left(\frac{e}{g_{t}}\right)dt.
\]
\end{lem}

\begin{proof}
Note that
\begin{align*}
dg_{t} & =\left\langle \int_{E}(x-\mu_{t})p_{t}(x)dx,dW_{t}\right\rangle .
\end{align*}
Therefore, we have that
\[
d[g_{t}]_{t}=\norm{\int_{E}(x-\mu_{t})p_{t}(x)dx}_{2}^{2}dt.
\]
We bound the norm in two different ways. On one hand, we note that
\begin{equation}
\norm{\int_{E}(x-\mu_{t})p_{t}(x)dx}_{2}\leq\sup\norm{x-\mu_{t}}_{2}\left(\int_{E}p_{t}(x)dx\right)=\sup\norm{x-\mu_{t}}_{2}g_{t}\leq D\cdot g_{t}\label{eq:gt1}
\end{equation}
On the other hand, for any $\ell\geq1$, we have that 
\begin{align}
\norm{\int_{E}(x-\mu_{t})p_{t}(x)dx}_{2} & =\max_{\norm{\zeta}_{2}=1}\int_{E}(x-\mu_{t})^{T}\zeta\cdot p_{t}(x)dx\nonumber \\
 & \leq\max_{\norm{\zeta}_{2}=1}\left(\int_{E}\left|(x-\mu_{t})^{T}\zeta\right|^{\ell}\cdot p_{t}(x)dx\right)^{\frac{1}{\ell}}\left(\int_{E}p_{t}(x)dx\right)^{1-\frac{1}{\ell}}\nonumber \\
 & \leq2\ell\norm{A_{t}}_{\spe}^{1/2}\cdot g_{t}{}^{1-\frac{1}{\ell}}\label{eq:gt2}
\end{align}
where we used Lemma \ref{lem:lcmom} at the end. Setting $\ell=\log(\frac{e}{g_{t}})$,
we have the result.
\end{proof}
Using this, we can bound how fast $\log\frac{1}{g_{t}}$ changes.
\begin{lem}
\label{lem:set_large} For any $T\geq0$ and $\gamma\geq0$, we have
that
\[
\P\left(\text{ for all }0\leq t\leq T:\,\log\frac{1}{g_{0}}+\frac{1}{2}D^{2}t+\gamma\geq\log\frac{1}{g_{t}}\geq\log\frac{1}{g_{0}}-\gamma\right)\geq1-4\exp(-\frac{\gamma^{2}}{2TD^{2}}).
\]
\end{lem}

\begin{proof}
Since $dg_{t}=g_{t}\alpha_{t}^{T}dW_{t}$ for some $\norm{\alpha_{t}}_{2}\leq D$
(from (\ref{eq:gt1}) in Lemma \ref{lem:volumeKLS}), using Itô's
formula (Lemma (\ref{lem:Ito})) we have that
\begin{align*}
d\log\frac{e}{g_{t}} & =-\frac{dg_{t}}{g_{t}}+\frac{1}{2}\frac{d[g_{t}]_{t}}{g_{t}^{2}}\\
 & =-\alpha_{t}^{T}dW_{t}+\frac{1}{2}\norm{\alpha_{t}}^{2}dt.
\end{align*}
Let $Y_{t}$ be the process $dY_{t}=\alpha_{t}^{T}dW_{t}$. By Theorem
\ref{thm:Dubins} and the reflection principle, we have that
\[
\P(\max_{t\in[0,T]}\left|Y_{t}\right|\geq\gamma)\leq4\exp(-\frac{\gamma^{2}}{2TD^{2}}).
\]
\end{proof}
Now, we bound $\E g_{t}\sqrt{\log\frac{1}{g_{t}}}$. This is the main
result of this section.
\begin{lem}
\label{lem:g_sqrt_g}There is some universal constant $c\geq0$ such
that for any measurable subset $E$ such that $p_{0}(E)\leq\frac{1}{2}$
and any $T$ such that 
\[
0\leq T\leq c\cdot\max\left(\frac{1}{D^{2}}\log\frac{1}{p_{0}(E)},\frac{1}{\log\frac{1}{p_{0}(E)}+D}\right),
\]
we have that
\[
\E\left(p_{T}(E)\sqrt{\log\frac{1}{p_{T}(E)}}1_{p_{T}(E)\leq\frac{1}{2}}\right)\geq\frac{1}{5}p_{0}(E)\sqrt{\log\frac{1}{p_{0}(E)}}.
\]
\end{lem}

\begin{proof}
Fix any set $E\subset\Rn$ and define $g_{t}=p_{t}(E)$. First of
all, if $p_{0}(E)=\Theta(1)$, the statements is true because of Lemma
\ref{lem:set_large}. Hence, we can assume $p_{0}(E)\leq c$ for small
universal constant $c$.

Case 1) $T\leq\frac{1}{8}D^{-2}\log\frac{1}{g_{0}}$. Lemma \ref{lem:set_large}
shows that
\[
\P(\log\frac{1}{g_{t}}\geq\frac{1}{4}\log\frac{1}{g_{0}}\text{ for all }0\leq t\leq T)\geq1-4g_{0}^{2}.
\]
Using $g_{0}\leq\frac{1}{16}$, we have that
\[
\E\left(g_{T}\sqrt{\log\frac{1}{g_{T}}}1_{g_{T}\leq\frac{1}{2}}\right)\geq\E\left(g_{T}\sqrt{\log\frac{1}{g_{T}}}1_{\log\frac{1}{g_{T}}\geq\frac{1}{4}\log\frac{1}{g_{0}}}\right)\geq\E\left(g_{T}1_{\log\frac{1}{g_{T}}\geq\frac{1}{4}\log\frac{1}{g_{0}}}\right)\sqrt{\frac{1}{4}\log\frac{1}{g_{0}}}.
\]
Since $\E g_{T}=g_{0}$ and $g_{T}\leq1$, we have that 
\[
\E\left(g_{T}1_{\log\frac{1}{g_{T}}\geq\frac{1}{4}\log\frac{1}{g_{0}}}\right)=g_{0}-\E\left(g_{T}1_{\log\frac{1}{g_{T}}<\frac{1}{4}\log\frac{1}{g_{0}}}\right)\geq g_{0}-4g_{0}^{2}\geq\frac{1}{2}g_{0}.
\]
Therefore, 
\[
\E\left(g_{T}\sqrt{\log\frac{1}{g_{T}}}1_{g_{T}\leq\frac{1}{2}}\right)\geq\frac{1}{4}g_{0}\sqrt{\log\frac{1}{g_{0}}}.
\]

Case 2) $T\geq\frac{1}{8}D^{-2}\log\frac{1}{g_{0}}$. Now, we assume
that $T\leq\frac{1}{2c(D+\log\frac{1}{g_{0}})}$ where $c\geq1$ is
the universal constant appears in Lemma \ref{lem:norm_At}. Note that
\begin{align*}
dg_{t}\sqrt{\log\frac{e}{g_{t}}} & =\frac{2\log\frac{e}{g_{t}}-1}{2\sqrt{\log\frac{e}{g_{t}}}}dg_{t}-\frac{2\log\frac{e}{g_{t}}+1}{8g_{t}\log^{\frac{3}{2}}\frac{e}{g_{t}}}d[g_{t}]_{t}.
\end{align*}
Since $dg_{t}=g_{t}\log\frac{e}{g_{t}}\alpha_{t}^{T}dW_{t}$ for some
$\norm{\alpha_{t}}_{2}\leq\sqrt{30}\norm{A_{t}}_{\spe}^{1/2}$ (from
(\ref{eq:gt2}) in Lemma \ref{lem:volumeKLS}), 
\begin{align*}
dg_{t}\sqrt{\log\frac{e}{g_{t}}} & =\frac{1}{2}g_{t}\sqrt{\log\frac{e}{g_{t}}}(2\log\frac{e}{g_{t}}-1)\alpha_{t}^{T}dW_{t}-\frac{1}{8}g_{t}\sqrt{\log\frac{e}{g_{t}}}(2\log\frac{e}{g_{t}}+1)\norm{\alpha_{t}}_{2}^{2}dt.
\end{align*}
For any $s\geq s'\geq0$, we have that
\begin{align}
\E g_{s}\sqrt{\log\frac{e}{g_{s}}} & =g_{s'}\sqrt{\log\frac{e}{g_{s'}}}-\frac{1}{8}\int_{s'}^{s}\E\left(g_{t}\sqrt{\log\frac{e}{g_{t}}}(2\log\frac{e}{g_{t}}+1)\norm{\alpha_{t}}_{2}^{2}\right)dt\nonumber \\
 & \geq g_{s'}\sqrt{\log\frac{e}{g_{s'}}}-12\int_{s'}^{s}\E\left(\norm{A_{t}}_{\spe}g_{t}\log^{\frac{3}{2}}\frac{e}{g_{t}}\right)dt.\label{eq:glogg1}
\end{align}

Using $T\geq\frac{1}{8}D^{-2}\log\frac{1}{g_{0}}$, Lemma \ref{lem:set_large}
shows that
\begin{equation}
\P(15D^{2}T\geq\max_{0\leq t\leq T}\log\frac{1}{g_{t}})\geq1-4g_{0}^{2}.\label{eq:glogg2}
\end{equation}
Now, using $T\leq\frac{1}{2c(\sqrt{n}+\log\frac{1}{g_{0}})}$, Lemma
\ref{lem:norm_At} (with $k=2$) shows that
\begin{equation}
\P(\max_{t\in[0,T]}\norm{A_{t}}_{\spe}\geq2)\leq2g_{0}^{2}.\label{eq:glogg3}
\end{equation}
Let $E$ be the event that both $\max_{0\leq t\leq T}\log\frac{1}{g_{t}}\leq15D^{2}T$
and $\max_{t\in[0,T]}\norm{A_{t}}_{\spe}\leq2$. Then, combining (\ref{eq:glogg2})
and (\ref{eq:glogg3}), we have that
\begin{align*}
\E\left(\norm{A_{t}}_{\spe}g_{t}\log^{\frac{3}{2}}\frac{e}{g_{t}}\right) & \leq2(1+15D^{2}T)\E\left(g_{t}\log^{\frac{1}{2}}\frac{e}{g_{t}}1_{E}\right)+\E\left(\norm{A_{t}}g_{t}\log^{\frac{3}{2}}\frac{e}{g_{t}}1_{E^{c}}\right)\\
 & \leq2(1+15D^{2}T)\E\left(g_{t}\log^{\frac{1}{2}}\frac{e}{g_{t}}\right)+12\sqrt{D}g_{0}^{2}.
\end{align*}
where we used that $\norm{A_{t}}_{\spe}\leq\sqrt{D}$ a.s. and $g_{t}\log^{\frac{3}{2}}\frac{e}{g_{t}}\leq2$
and $\P(E^{c})\leq6g_{0}^{2}$. Putting it into (\ref{eq:glogg1}),
for any $s'\leq s\leq T$, we have that 
\begin{align*}
\E g_{s}\sqrt{\log\frac{e}{g_{s}}} & \geq g_{s'}\sqrt{\log\frac{e}{g_{s'}}}-24(1+15D^{2}T)\int_{s'}^{s}\E g_{t}\sqrt{\log\frac{e}{g_{t}}}dt-144\sqrt{D}g_{0}^{2}T.
\end{align*}
Let $s^{*}$ be the $s\in[0,T]$ that maximizes $\E g_{s}\sqrt{\log\frac{e}{g_{s}}}$
and $f_{t}=\E g_{t}\sqrt{\log\frac{e}{g_{t}}}$. For all $T\geq s\geq s^{*}$,
we have that
\begin{align*}
f_{s} & \geq f_{s^{*}}-24(1+15D^{2}T)\int_{s'}^{s}f_{t}dt-144\sqrt{D}g_{0}^{2}T\\
 & \geq f_{s^{*}}-24(T+15D^{2}T^{2})f_{s^{*}}-144\sqrt{D}g_{0}^{2}T\\
 & \geq\frac{4}{5}f_{s^{*}}\geq\frac{4}{5}f_{0}
\end{align*}
where we used that $T\leq\frac{1}{10^{5}D}\leq\frac{1}{10^{5}}$ and
$f_{s^{*}}\geq f_{0}\geq g_{0}$ at the end. Therefore, we have
\[
\E g_{T}\sqrt{\log\frac{e}{g_{T}}}\geq\frac{4}{5}g_{0}\sqrt{\log\frac{e}{g_{0}}}.
\]
Now, we note that
\begin{align*}
\E\left(g_{T}\sqrt{\log\frac{1}{g_{T}}}1_{g_{T}\leq\frac{1}{2}}\right) & \geq\frac{1}{2}\E\left(g_{T}\sqrt{\log\frac{e}{g_{T}}}1_{g_{T}\leq\frac{1}{2}}\right)\geq\frac{2}{5}g_{0}\sqrt{\log\frac{e}{g_{0}}}-\frac{\sqrt{\log2e}}{2}g_{0}\\
 & \geq\frac{1}{5}g_{0}\sqrt{\log\frac{e}{g_{0}}}
\end{align*}
where we used $g_{0}\leq\frac{1}{e^{10}}$ at the end.
\end{proof}

\subsection{Gaussian Case}

The next theorem can be found in \cite[Theorem 1.1]{Ledoux1999}.
We give a proof here for completeness.
\begin{thm}
\label{thm:Gaussian-iso-2}Let $h(x)=f(x)e^{-\frac{t}{2}\norm x^{2}}/\int f(y)e^{-\frac{t}{2}\norm y^{2}}dy$
where $f:\R^{n}\rightarrow\R_{+}$ is an integrable logconcave function
and $t\geq0$. Let $p(S)=\int_{S}h(x)dx$. For any $p(S)\leq\frac{1}{2}$,
we have
\[
p(\partial S)=\Omega\left(\sqrt{t}\right)\cdot p(S)\sqrt{\log\frac{1}{p(S)}}.
\]
\end{thm}

\begin{proof}
Let $g=p(S)$. Then, the desired statement can be written as 
\[
\int_{S}h(x)\:dx=g\int_{\R^{n}}h(x)\implies\int_{\partial S}h(x)\:dx\ge c\sqrt{t}g\sqrt{\log(\frac{1}{g})}\int_{\R^{n}}h(x)
\]
for some constant $c$. By the localization lemma \cite{KLS95}, if
there is a counterexample, there is a counterexample in one-dimension
where $h$ is of the form $h(x)=Ce^{-\gamma x-t\frac{x^{2}}{2}}$
restricted to some interval on the real line. Without loss of generality,
we can assume that $S$ is a single interval, otherwise, any interval
has smaller boundary measure and smaller volume. By rescaling, flipping
and shifting the function $h$, we can assume that $t=1$ and that
$h(x)=e^{-\frac{1}{2}x^{2}}1_{[a,b]}$ and that $S=[y,b]$ for some
$a<y<b$. 

It remains to show that for $g\le\frac{1}{2}$, 
\[
\frac{\int_{y}^{b}e^{-\frac{x^{2}}{2}}\,dx}{\int_{a}^{b}e^{-\frac{x^{2}}{2}}\:dx}=g\implies\frac{e^{-\frac{y^{2}}{2}}}{\int_{a}^{b}e^{-\frac{x^{2}}{2}}\,dx}\apprge g\sqrt{\log\frac{1}{g}}.
\]

There are three cases: $y\geq1$, $y\leq-1$ and $-1\leq y\leq-1$.
Let $A=\int_{a}^{b}e^{-\frac{x^{2}}{2}}\:dx$. In the first case $y\geq1$,
we note that the integral 
\begin{equation}
g\cdot A=\int_{y}^{b}e^{-\frac{x^{2}}{2}}\,dx\leq\int_{y}^{\infty}e^{-\frac{x^{2}}{2}}\,dx\apprle e^{-\frac{y^{2}}{2}}/y.\label{eq:gaussian_case_1}
\end{equation}
Rearrange the terms, we have that
\[
\frac{e^{-\frac{y^{2}}{2}}}{\int_{a}^{b}e^{-\frac{x^{2}}{2}}\,dx}\apprge g\cdot y.
\]
Using (\ref{eq:gaussian_case_1}), we have that $y\apprge\sqrt{\log\frac{1}{g\cdot A}}$
and hence
\[
\frac{e^{-\frac{y^{2}}{2}}}{\int_{a}^{b}e^{-\frac{x^{2}}{2}}\,dx}\apprge g\cdot\sqrt{\log\frac{1}{g\cdot A}}\apprge g\cdot\sqrt{\log\frac{1}{g}}
\]
where we used that $A=\int_{a}^{b}e^{-\frac{x^{2}}{2}}\:dx\leq\sqrt{2\pi}$
and $g\leq\frac{1}{2}$.

For the second case $y\leq-1$, since $g\leq\frac{1}{2}$, we have
that
\[
\frac{e^{-\frac{y^{2}}{2}}}{\int_{a}^{b}e^{-\frac{x^{2}}{2}}\,dx}\geq\frac{e^{-\frac{y^{2}}{2}}}{2\int_{a}^{y}e^{-\frac{x^{2}}{2}}\,dx}\geq\frac{e^{-\frac{y^{2}}{2}}}{2\int_{-\infty}^{y}e^{-\frac{x^{2}}{2}}\,dx}\apprge\left|y\right|\geq1
\]
where we used that $\int_{-\infty}^{y}e^{-\frac{x^{2}}{2}}\,dx=\frac{O(1)}{\left|y\right|}e^{-\frac{y^{2}}{2}}$
at the end. This proves the second case because $g\sqrt{\log\frac{1}{g}}\apprle1$.

For the last case $|y|\leq1$, we note that
\[
\frac{e^{-\frac{y^{2}}{2}}}{\int_{a}^{b}e^{-\frac{x^{2}}{2}}\,dx}\geq\frac{e^{-\frac{1}{2}}}{\sqrt{2\pi}}\apprge1.
\]
\end{proof}

\subsection{Proof of the log-Cheeger bound}

We can now prove a bound on the isoperimetric constant. 
\begin{proof}[Proof of Theorem. \ref{lem:expand_p}.]
 By Lemma \ref{lem:def-pt}, $p_{t}$ is a martingale and therefore
\[
p(\partial E)=p_{0}(\partial E)=\E p_{T}(\partial E).
\]
Next, by the definition of $p_{T}$ (\ref{eq:dBt}), we have that
$p_{T}(x)\propto e^{c_{T}^{T}x-\frac{T}{2}\norm x^{2}}p(x)$ and Theorem
\ref{thm:Gaussian-iso-2} shows that if $p_{t}(E)\leq\frac{1}{2}$,
we have that
\[
p_{T}(\partial E)\apprge\sqrt{T}\cdot p_{t}(E)\sqrt{\log\frac{1}{p_{t}(E)}}.
\]
Hence, we have that
\[
p(\partial E)\apprge\sqrt{T}\cdot\E\left(p_{t}(E)\sqrt{\log\frac{1}{p_{t}(E)}}\cdot1_{p_{t}(E)\leq\frac{1}{2}}\right).
\]

Lemma \ref{lem:g_sqrt_g} shows that if 
\[
T\leq c\cdot\max\left(\frac{1}{D^{2}}\log\frac{1}{p_{0}(E)},\frac{1}{\log\frac{1}{p_{0}(E)}+D}\right),
\]
we have
\begin{align*}
p(\partial E) & \apprge\sqrt{T}\cdot p_{0}(E)\sqrt{\log\frac{1}{p_{0}(E)}}\\
 & \apprge\left(\frac{\sqrt{\log\frac{1}{p_{0}(E)}}}{D}+\frac{1}{\sqrt{\log\frac{1}{p_{0}(E)}+D}}\right)p_{0}(E)\sqrt{\log\frac{1}{p_{0}(E)}}\\
 & \apprge\left(\frac{\log\frac{1}{p_{0}(E)}}{D}+\sqrt{\frac{\log\frac{1}{p_{0}(E)}}{\log\frac{1}{p_{0}(E)}+D}}\right)p_{0}(E)\\
 & \apprge\left(\frac{\log\frac{1}{p_{0}(E)}}{D}+\sqrt{\frac{\log\frac{1}{p_{0}(E)}}{D}}\right)p_{0}(E).
\end{align*}
\end{proof}

\input{\string"log_sob_large_deviation.tex\string"}

\subsection{Small ball probability}

To bound the small ball probability, we use the following theorem
that gives the small ball estimate for logconcave distributions with
a subgaussian tail, including strongly logconcave distributions.
\begin{thm}[Theorem 1.3 in \cite{paouris2012small}]
\label{thm:smallpaouris}Given a logconcave distribution $p$ with
covariance matrix $A$. Suppose that $p$ has subgaussian constant
$b$, i.e.
\[
\E_{x\sim p}\left\langle x,\theta\right\rangle ^{2k}\leq\beta^{2k}\E_{g\sim\gamma}g^{2k}\text{ for all }\theta\in\Rn,k=1,2,\cdots
\]
where $\gamma$ is a standard Gaussian random variable. For any $y\in\Rn$
and $0\leq\varepsilon\leq c_{1}$, we have that
\[
\P_{x\sim p}\left(\norm{x-y}_{2}^{2}\leq\varepsilon\tr A\right)\leq\varepsilon^{c_{2}\beta^{-2}\norm A_{\spe}^{-1}\tr A}
\]
for some positive universal constant $c_{1}$ and $c_{2}$.
\end{thm}

We can now use stochastic localization to bound the small ball probability
(Theorem \ref{thm:small-ball}).
\begin{proof}[Proof of Theorem \ref{thm:small-ball}.]
Lemma \ref{lem:norm_At} shows that
\begin{equation}
\norm{A_{t}}_{\spe}\leq2\text{ for all }0\leq t\leq\frac{c}{kn^{1/k}}\label{eq:event_A}
\end{equation}
with probability at least $\frac{9}{10}$ for some universal constant
$c>0$.

Now, we use it to bound $\tr A_{t}$. Lemma \ref{lem:dA} shows that
\[
d\tr A_{t}=\E_{x}\norm{x-\mu_{t}}^{2}(x-\mu_{t})^{T}dW_{t}-\tr A_{t}^{2}.
\]
Under the event \ref{eq:event_A}, we have that $\tr A_{t}^{2}\leq4n$.
Using Lemma \ref{lem:tensorestimate}, we have that
\[
\norm{\E_{x}\norm{x-\mu_{t}}^{2}(x-\mu_{t})}_{2}\lesssim n.
\]
Therefore, $d\tr A_{t}\geq O(n)dW_{t}-O(n)$. Hence, $\tr A_{t}$
is lower bounded by a Gaussian with mean $(1-c't)n$ and variance
$c''n^{2}t$ for some constants $c'$ and $c''$. Using this we have
that
\begin{equation}
\tr A_{t}\gtrsim n\text{ for all }0\leq t\leq c''\label{eq:event_B}
\end{equation}
with probability at least $\frac{8}{10}$ for some universal constant
$c''$.

Next, we let $E=\{x:\norm x^{2}\leq\varepsilon n\}$. We can assume
the diameter of $p$ is $2\sqrt{n}$ by truncating the domain, it
does not affect the small ball probability by more than a constant
factor. Let $g_{t}=p_{t}(E)$. Lemma \ref{lem:volume} shows that
\[
d[g_{t}]_{t}\leq(2\sqrt{n})^{2}g_{t}^{2}dt.
\]
Using this, we have that 
\[
d\log g_{t}=\eta_{t}dW_{t}-\frac{1}{2}\frac{1}{g_{t}^{2}}d[g_{t}]_{t}\geq\eta_{t}dW_{t}-(2\sqrt{n})^{2}dt
\]
with $0\leq\eta_{t}\leq2\sqrt{n}$. Solving this equation, with probability
at least $\frac{9}{10}$, we have 
\begin{equation}
g_{t}\geq e^{-O(nt)}g_{0}.\label{eq:event_C}
\end{equation}

For the distribution $p_{t}$, we note that $p_{t}(x)=e^{-\frac{t}{2}\norm x_{2}^{2}}q_{t}(x)$
for some logconcave distribution $q_{t}$ (Definition \ref{def:A}).
Therefore, $p_{t}$ has subgaussian constant at most $O(\frac{1}{\sqrt{t}})$.
Theorem \ref{thm:smallpaouris} shows that
\[
\P_{x\sim p_{t}}\left(\norm{x-y}_{2}^{2}\leq\varepsilon\tr A_{t}\right)\leq\varepsilon^{\Omega(t\norm{A_{t}}_{\spe}^{-1}\tr A_{t})}.
\]
Therefore, under the events (\ref{eq:event_A}) and (\ref{eq:event_B}),
we have that 
\[
g_{t}\leq\exp(-\Omega(nt\log\frac{1}{\varepsilon})).
\]
Since the events (\ref{eq:event_A}), (\ref{eq:event_B}) and \ref{eq:event_C}
each happens with probability $\frac{9}{10}$, we have that with probability
at least $\frac{7}{10}$, with $t=\Omega(k^{-1}n^{-1/k})$, 
\[
g_{0}\leq\exp(O(nt)-\Omega(nt\log\frac{1}{\varepsilon}))\leq\exp(-\Omega(nt\log\frac{1}{\varepsilon})).
\]
This completes the proof.
\end{proof}

\section{\label{sec:speedy}Convergence of the speedy walk}

Here we bound the log-Cheeger constant of the speedy walk Markov chain.
This theorem and Theorem 1.2 from \cite{KannanLM06} implies the mixing
time of the speedy walk (Theorem \ref{thm:speedy}).
\begin{thm}
\label{thm:speedy-log-Cheeger}For the speedy walk with $\delta\lesssim1$
steps applied to an isotropic convex body $K$ with diameter $D$,
the expansion $\psi(s)$ of any subset of measure $e^{-O(D/\delta^{2})}\leq s\le\frac{1}{2}$
satisfies

\[
\psi(s)\apprge\frac{\delta}{\sqrt{n}}\left(\sqrt{\frac{\log\frac{1}{s}}{D}}+\frac{\log\frac{1}{s}}{D}\right).
\]
\end{thm}

Recall the local conductance is $\ell(x)\defeq\frac{\vol(K\cap(x+\delta B_{n}))}{\vol(\delta B_{n})}$.
We will need the following facts about it.
\begin{lem}
\label{lem:local}For any convex body $K$, the function $\ell(x)$
is logconcave and the stationary distribution of the speedy walk is
proportional to it. Moreover, assuming $K$ contains a unit ball,
the subset 

\[
K'=\left\{ x\in K\,:\,\ell(x)\ge\frac{3}{4}\right\} 
\]
satisfies $\vol(K')\ge(1-2\delta\sqrt{n})\vol(K)$.
\end{lem}

Next we need a small refinement of Theorem \ref{thm:Gaussian-iso-2}
about the isoperimetry of a density proportional to a Gaussian times
a logconcave function. For two points $u,v$ in the support of a nonnegative
function $h$, let 
\[
d_{h}(u,v)=\frac{\left|h(u)-h(v)\right|}{\max\left\{ h(u),h(v)\right\} }.
\]

\begin{thm}
\label{thm:gaussian-speedy}Let $h(x)=f(x)e^{-\frac{t}{2}\norm x_{2}^{2}}/\int f(y)e^{-\frac{t}{2}\norm y_{2}^{2}}dy$
where $f:\R^{n}\rightarrow\R_{+}$ is an integrable logconcave function
and $t\geq0$. Let $p(S)=\int_{S}h(x)dx$. Let $S_{1},S_{2},S_{3}$
partition of $\R^{n}$ such that $p(S_{1})\le p(S_{2})$ and for any
$u\in S_{1},v\in S_{2}$, either $\|u-v\|_{2}\ge d$ or $d_{h}(u,v)\ge d_{h}$.
Then, 
\[
p(S_{3})\apprge\min\left\{ \frac{d_{h}}{\sqrt{n}},d\sqrt{t\log\frac{1}{p(S_{1})}}\right\} p(S_{1}).
\]
\end{thm}

\begin{proof}
After applying localization, we have two cases, based on which of
the distance conditions in the theorem holds for a needle. If it is
the first case, the theorem follows from Theorem \ref{thm:Gaussian-iso-2}.
For the second case, it follows from Lemma 3.8 in \cite{KLS97}. 
\end{proof}
The next lemma is Lemma 6.4 from \cite{KannanLM06}. 
\begin{lem}[\cite{KannanLM06}]
\label{lem:coupling}Let $x,y\in K$ with $\norm{x-y}\le\delta/\sqrt{n}$.
Then 
\[
d_{TV}(P_{x,}P_{y})<1-d_{\ell}(x,y).
\]
\end{lem}

We are now ready to bound the log-Cheeger constant. 
\begin{proof}[Proof of Theorem \ref{thm:speedy-log-Cheeger} ]
 Note that the distribution with density proportional to $\ell(y)$
is nearly isotropic using Lemma \ref{lem:local}. Let $S\subset K$
have measure $s$ according to $\ell(y)$. Following a standard argument
partition $K$ as follows:

\begin{align*}
S_{1} & =\left\{ x\in S\,:\,P_{x}(K\setminus S)\le0.01\right\} \\
S_{2} & =\left\{ x\in K\setminus S\,:\,P_{x}(S)\le0.01\right\} \\
S_{3} & =K\setminus S_{1}\setminus S_{2}.
\end{align*}
Then by Lemma \ref{lem:coupling}, for every $x\in S_{1},y\in S_{2}$,
either $d_{\ell}(x,y)\ge d_{\ell}=\frac{1}{400}$ or $\norm{x-y}\ge d=\delta/\sqrt{n}$. 

To bound $p(S_{3})$, we consider the localization process $p_{t}$.
Without loss of generality, $p(S_{1})\leq p(S_{2})$. Since $p_{t}$
is a martingale, we have that
\begin{align*}
p(S_{3}) & =\E p_{T}(S_{3})\\
 & \gtrsim\E\left[\min\left\{ \frac{d_{\ell}}{\sqrt{n}},d\sqrt{T\log\frac{1}{\min\left(p_{T}(S_{1}),p_{T}(S_{2})\right)}}\right\} \min\left(p_{T}(S_{1}),p_{T}(S_{2})\right)\right]\\
 & \gtrsim\frac{1}{\sqrt{n}}\E\left[\min\left\{ 1,\delta\sqrt{T\log\frac{1}{p_{T}(S_{1})}}\right\} p_{T}(S_{1})1_{p_{T}(S_{1})\leq\frac{1}{2}}\right]
\end{align*}
where we used Theorem \ref{thm:gaussian-speedy} in the first inequality
and we used $d_{\ell}=\frac{1}{400}$ and $d=\delta/\sqrt{n}$.

Let $E$ be the event $\delta\sqrt{T\log\frac{1}{p_{T}(S_{1})}}\geq1$.
Under this event, we have that $\log\frac{1}{p_{T}(S_{1})}\geq\frac{1}{T\delta^{2}}$
and hence
\[
\delta\sqrt{T\log\frac{1}{p_{T}(S_{1})}}p_{T}(S_{1})\leq e^{-\frac{1}{T\delta^{2}}}.
\]
Hence, we have that
\[
p(S_{3})\gtrsim\delta\sqrt{\frac{T}{n}}\E\left[p_{T}(S_{1})\sqrt{\log\frac{1}{p_{T}(S_{1})}}1_{p_{T}(S_{1})\leq\frac{1}{2}}\right]-\frac{1}{\sqrt{n}}e^{-\frac{1}{T\delta^{2}}}.
\]
Take $T=\max\left(\frac{1}{D^{2}}\log\frac{1}{p(S_{1})},\frac{1}{\log\frac{1}{p(S_{1})}+D}\right)$,
Lemma \ref{lem:g_sqrt_g} shows that 
\[
p(S_{3})\gtrsim\delta\sqrt{\frac{T}{n}}p(S_{1})\sqrt{\log\frac{1}{p(S_{1})}}-\frac{1}{\sqrt{n}}e^{-\frac{1}{T\delta^{2}}}.
\]
Using $\delta\lesssim1$, for $\log\frac{1}{p_{T}(S_{1})}\gtrsim\frac{D}{\delta^{2}}$,
the equation is simply
\begin{align*}
p(S_{3}) & \gtrsim\delta\sqrt{\frac{T}{n}}p(S_{1})\sqrt{\log\frac{1}{p(S_{1})}}\\
 & \gtrsim\frac{\delta}{\sqrt{n}}p(S_{1})\left(\sqrt{\frac{\log\frac{1}{p(S_{1})}}{D}}+\frac{\log\frac{1}{p(S_{1})}}{D}\right)
\end{align*}

Going back to the conductance, for $\log\frac{1}{p(S)}\gtrsim\frac{D}{\delta^{2}}$,
we have
\begin{align*}
\frac{1}{2}\left(\int_{S}P_{x}(K\backslash S)d\ell(x)+\int_{K\backslash S}P_{x}(S)d\ell(s)\right) & \geq\frac{1}{2}\int_{S_{3}}(0.01)d\ell\\
 & \gtrsim\frac{\delta}{\sqrt{n}}p(S)\left(\sqrt{\frac{\log\frac{1}{p(S)}}{D}}+\frac{\log\frac{1}{p(S)}}{D}\right)
\end{align*}
Therefore, we have that
\[
\psi(s)\gtrsim\frac{\delta}{\sqrt{n}}\left(\sqrt{\frac{\log\frac{1}{s}}{D}}+\frac{\log\frac{1}{s}}{D}\right)\text{ if }s\geq e^{-\frac{cD}{\delta^{2}}}.
\]
\end{proof}

\section{\label{sec:tight}Optimality of the bounds}
\begin{lem}
\label{lem:lower_bound_logsob}For any $\frac{n}{2}\geq D\geq2\sqrt{n}$,
there exists an isotropic logconcave distribution with diameter $D$
such that its log-Cheeger constant is $O(1/\sqrt{D})$ and its log-Sobolev
constant is $O(1/D)$. 
\end{lem}

In fact, we get a nearly lower tight bound on the mixing time of the
ball walk, from an arbitrary start, in terms of the number of proper
steps. Recall that by a proper step we mean steps where the current
point changes, not counting the steps that are discarded due to the
rejection probability. In our lower bound example, the local conductance,
i.e., the probability of a proper step, is at least a constant everywhere
and so the total number of steps in expectation is within a constant
factor of the number of proper steps.
\begin{lem}
\label{lem:lower_bound_ball}For any $\frac{n}{2}\geq D\geq2\sqrt{n}$,
there exists an isotropic convex body with diameter $D$ such that
ball walk mixes in $\widetilde{\Omega}(n^{2}D)$ proper steps.
\end{lem}

Both theorems are based on the following cone
\[
K=\left\{ x:\:0\le x_{1}\le n,\sum_{i=2}^{n}x_{i}^{2}\le\frac{1}{n}x_{1}^{2}\right\} ,
\]
and the truncated cone
\[
K_{D}=K\cap\left\{ x:\ (x_{1}-n)^{2}+\sum_{i=2}^{n}x_{i}^{2}\leq D^{2}\right\} .
\]

\begin{proof}[Proof of Lemma \ref{lem:lower_bound_logsob}.]
The convex body $K_{D}$ is nearly isotropic and has diameter $D$.
Let
\[
t_{0}=n-\sqrt{D^{2}-n}.
\]
Consider the subset $S=K\cap\left\{ t_{0}\le x_{1}\le t_{0}+1\right\} $.
Note that $S$ is fully contained in $K_{D}$ and that
\[
p(S)=\frac{\vol(S)}{\vol(K_{D})}\lesssim\frac{\vol(S)}{\vol(K)}=\left(\frac{t_{0}+1}{n}\right)^{n-1}-\left(\frac{t_{0}}{n}\right)^{n-1}\leq e^{-\Omega(D)}.
\]
On the other hand, the expansion of $S$ is at most $2$. Therefore,
the log-Cheeger constant $\kappa$ of $K_{D}$ must satisfy 
\[
\kappa\sqrt{\log\frac{1}{p(S)}}\le2
\]
or $\kappa=O\left(D^{-\frac{1}{2}}\right)$ as claimed. It is known
that the log-Sobolev constant $\rho=\Theta(\kappa^{2})$ (see e.g.,
\cite{ledoux1994simple}). This gives the second claim.
\end{proof}
The proof of the lower bound for the ball walk is more involved. We
start the process with the uniform distribution on the set $S=K_{D}\cap\left\{ t_{0}\le x_{1}\le t_{0}+1\right\} $.
The distribution at any time will remain spherically symmetric at
each time step. Each step along the $e_{1}$ direction is approximately
$\pm\frac{\delta}{\sqrt{n}}=\pm\frac{1}{n}$. An unbiased process
that moves $\pm\frac{1}{n}$ along $e_{1}$ in each step would take
$\Omega(n^{2}D^{2})$ steps to converge since the diameter is effectively
$nD$. But there is a slight drift in the positive direction towards
the base. This is because for points near the boundary of the cone
a step away from the base has higher rejection probability compared
to a step towards the base. In the proof we will bound the drift from
above and therefore bound the number of steps needed from below.
\begin{proof}[Proof of Lemma \ref{lem:lower_bound_ball}.]
Let the starting distribution be uniform over $S=K_{D}\cap\left\{ t_{0}\le x_{1}\le t_{0}+1\right\} $.
For each point $x$, the local conductance is $\ell(x)=\frac{\vol(K_{D}\cap(x+\delta B))}{\vol(\delta B)}$,
the fraction of the $\delta$-ball around $x$ contained in $K_{D}$.
Let $\delta=\frac{1}{\sqrt{n}}$ and $D<0.99n$. 

On any slice $A(t)$, we can identify the bias felt by each point
$y\in A(t)$. It is the fraction of the $\delta$-ball around $y$
that is contained in $K$ but its mirror image through the plane $x_{1}=y_{1}$
is not contained in $K$. This fraction depends on the distance of
$y$ to the boundary of $A(t)$. Namely, the bias is $O(e^{-r^{2}/2}\delta)=O(e^{-r^{2}/2}/\sqrt{n})$
if the distance of $y$ to the boundary of $A(t)$ is $r\delta/\sqrt{n}=r/n$. 

Next, we observe that the density within each slice is spherically
symmetric. In fact, a stronger property holds. 
\begin{claim}
The distribution at any time in any cross-section is spherically symmetric
and unimodal. 
\end{claim}

Suppose the claim is true. Then, since the radius of any slice for
$t\ge t_{0}=n-\sqrt{D^{2}-n}$ is $\Omega(\sqrt{n})$, the fraction
of points at distance $r/n$ from the boundary is at most $1-(1-O(r/n^{1.5}))^{n}=O(r/\sqrt{n})$
fraction of the slice. Each point at distance $r/n$ has probability
$O(e^{-r^{2}/2}/\sqrt{n})$ of making a move towards the base with
no symmetric move away. Thus, effectively, each step of the process
is balanced along $e_{1}$ with probability $1-\Omega\left(\frac{e^{-r^{2}/2}}{\sqrt{n}}\right)$
and is $O\left(\frac{1}{n}\right)$ biased towards the base with probability
$O\left(\int_{r}\frac{e^{-r^{2}/2}}{\sqrt{n}}\cdot\frac{r}{\sqrt{n}}\,dr\right)=O\left(\frac{1}{n}\right)$.
Note that by the claim this holds for the density at every time and
on every slice. Therefore if we write $X(t)$ as the expected $x_{1}$
coordinated of the distribution after $t$ steps, we have 
\[
X(t+1)\le X(t)+O\left(\frac{1}{n^{2}}\right).
\]
In other words, the drift is $O\left(\frac{1}{n^{2}}\right)$ per
step and it takes $\Omega\left(n^{2}D\right)$ steps to cover the
$\Omega(D)$ distance to get within distance $1$ of the base along
$e_{1}$. This is necessary to have total variation distance less
than (say) $0.1$ from the target speedy distribution over the body.
\begin{figure}
\begin{centering}
\begin{tikzpicture}[y=0.80pt,x=0.80pt,yscale=-1, inner sep=0pt, outer sep=0pt]   \path[draw=black,line width=0.5pt] (60,124) -- (172.4,124);   \path[draw=black,line width=0.5pt] (60,124) -- (172.4,88);   \path[draw=black,line width=0.5pt] (60,124) -- (172.4,160);   \path[draw=black,line width=0.5pt] (173,124) ellipse (12 and 36);   \path[draw=black!25!green,line width=0.5pt] (72.9019,120) .. controls     (74.1409,121.6282) and (75.2116,123.9826) .. (72.9019,127.7602) node[pos=-0.5, above] {$\frac{1}{\sqrt{n}}$};        \path[draw=black!25!red,line width=0.5pt] (102.5654,111.2262) ellipse (0.1553cm and 0.1533cm);   \path[draw=black!25!blue,line width=0.5pt] (41.6054,167.1767) ellipse (0.6850cm and 0.6651cm);   \path[draw=black,dash pattern=on 0.68pt off 0.23pt,line width=0.5pt]     (41.7436,167.2131) ellipse (0.5588cm and 0.5332cm);   \path[draw=black!25!red,line width=0.5pt] (41.2551,146.2606) ellipse (0.1553cm and 0.1533cm);   \path[draw=black,line width=0.5pt] (67.8245,195.0162) --    (190.4452,166.4695);   \path[draw=black!25!red,line width=0.5pt] (133.0844,185.9210) ellipse     (0.6850cm and 0.6671cm);   \path[draw=black!25!blue,line width=0.5pt] (134.0008,179.4453) -- (133.3520,209.2895);   \path[draw=black,line width=0.5pt]  (134.0008,180.0941) --   (156.3839,192.0966);   \path[draw=black,line width=0.5pt] (139.1911,181.0672) -- (141.7862,178.7965);   \path[draw=black,line width=0.5pt] (143.4082,183.0136) -- (148.2741,178.1477);   \path[draw=black,line width=0.5pt] (146.9765,185.6087) -- (153.4644,179.1209);   \path[draw=black,line width=0.5pt] (151.8424,186.9063) -- (155.4107,183.0136);   \path[->, draw=black,dash pattern=on 2pt off 2pt,line width=0.5pt]     (102,110.5) --  (123.9446,159.0085);   \path[->,draw=black,dash pattern=on 2pt off 2pt,line width=0.5pt]     (98,128) --  (53,160);   \path[draw=black!25!blue,line width=0.5pt] (102,124) ellipse (4.5 and 13.5);
\end{tikzpicture}
\par\end{centering}
\centering{}\caption{The lower bound construction}
\end{figure}

We now prove the claim. We need to argue that starting at the distribution
at time $t$ in each slice is unimodal, i.e., radially monotonic nonincreasing
going out from the center. Consider two points $x,y$ on the same
slice with $x$ closer to the boundary. Without loss of generality,
we can assume they are on the same radial line from the center. Also,
$\ell(x)\le\ell(y)$. Let $B(x)=K_{D}\cap(x+\delta B)$. Every point
$z\in B(y)\setminus B(x)$ has $\ell(z)\ge\ell(w)$ for any point
in $w\in B(x)\setminus B(y)$. Suppose the current distribution is
$p_{t}$, which is a radially monotonic nonincreasing function. Then,
\begin{align*}
p_{t+1}(y)-p_{t+1}(x) & =\int_{z\in B(y)}\frac{p_{t}(z)}{\vol(\delta B)}dz+(1-\ell(y))p_{t}(y)-\int_{z\in B(x)}\frac{p_{t}(z)}{\vol(\delta B)}dz-(1-\ell(x))p_{t}(x)\\
= & \int_{z\in B(y)\backslash B(x)}\frac{p_{t}(z)}{\vol(\delta B)}dz-\int_{w\in B(x)\backslash B(y)}\frac{p_{t}(w)}{\vol(\delta B)}dz\\
 & +(1-\ell(y))p_{t}(y)-(1-\ell(x))p_{t}(x)
\end{align*}
The above expression is minimized by choosing $p_{t}$ to be as uniform
as possible (under the constraint of monotonicity, i.e., $p_{t}(y)\ge p_{t}(x)$
and $\forall z\in B(y)\setminus B(x),w\in B(x)\setminus B(y),\,p_{t}(z)\ge p_{t}(w)$)
in particular we can set $p_{t}(z)=p_{t}$ to be constant in $B(x)\cup B(y)$.
Then,
\begin{align*}
p_{t+1}(y)-p_{t+1}(x) & \ge p_{t}\ell(y)+(1-\ell(y))p_{t}-p_{t}\ell(x)-(1-\ell(x))p_{t}\\
 & =0.
\end{align*}
Thus, the new density maintains monotonicity as claimed. 
\end{proof}

\begin{acknowledgement*}
This work was supported in part by NSF Awards CCF-1563838, CCF-1717349,
CCF-1740551, CCF-1749609 and DMS-1839116. We thank Ravi Kannan, Laci
Lov\'{a}sz, Assaf Naor and Nisheeth Vishnoi for their support and encouragement.
We also thank S\'{e}bastien Bubeck, Ben Cousins, Ronen Eldan, Bo'az Klartag,
Anup B. Rao and Ting-Kam Leonard Wong for helpful discussions, and
Ronen for his wonderful invention of stochastic localization. Finally,
we thank Emanuel Milman for showing us the proof for small ball probability.
\end{acknowledgement*}

\appendix

\section{Bounding the spectral norm without anisotropic KLS}

Recall that Lemma \ref{lem:norm_At} shows that $\P(\max_{t\in[0,T]}\norm{A_{t}}_{\spe}\geq2)\leq2\exp(-\frac{1}{cT})$
for $0\leq T\leq\frac{1}{c\cdot kn^{1/k}}$ if the anisotropic KLS
$\psi_{p}\lesssim\left(\tr A^{k}\right)^{1/2k}$ holds. Since the
bound on $\|A_{t}\|_{\op}$ implies a bound on the isotropic KLS.
Therefore, it is interesting to get rid of the anisotropic condition,
namely, can we use the isotropic KLS to ``prove'' isotropic via
localization. The original Eldan paper shows this recursion gains
a $\sqrt{\log n}$ factor. In this section, we show the loss is $(\log n)^{\frac{1}{2}-\frac{1}{2k}}=(\log n)^{\frac{1}{4}}$
when $k=2$. We note that if each recursion losses a factor when $k=2$,
this implies a better KLS bound.

Now, we note that the anisotropic KLS assumption is only used in Lemma
\ref{lem:tensor_bound}.
\begin{lem}
\label{lem:tensor_bound2}Assume that for some $k\geq2$, $\psi_{n}\lesssim n^{1/2k}$
for all $n$. Given a logconcave distribution $p$ with mean $\mu$
and covariance $A$. For any $B^{(1)},B^{(2)},B^{(3)}\succeq0$, 
\begin{align*}
 & \left|\E_{x,y\sim p}(x-\mu)^{T}B^{(1)}(y-\mu)\cdot(x-\mu)^{T}B^{(2)}(y-\mu)\cdot(x-\mu)^{T}B^{(3)}(y-\mu)\right|\\
\lesssim & (\log n)^{1-\frac{1}{k}}\cdot\tr(A^{\frac{1}{2}}B^{(1)}A^{\frac{1}{2}})\cdot\left(\tr(A^{\frac{1}{2}}B^{(2)}A^{\frac{1}{2}})^{k}\right)^{1/k}\cdot\norm{A^{\frac{1}{2}}B^{(3)}A^{\frac{1}{2}}}_{\spe}.
\end{align*}
\end{lem}

\begin{proof}
Without loss of generality, we can assume $p$ is isotropic. Furthermore,
we can assume $B^{(1)}$ is diagonal. Let $\Delta^{(k)}=\E_{x\sim p}xx^{T}\cdot x^{T}e_{k}$.
Similar to the proof in Lemma \ref{lem:tensor_bound}, we have that
\begin{equation}
\E_{x,y\sim p}x^{T}B^{(1)}y\cdot x^{T}B^{(2)}y\cdot x^{T}B^{(3)}y\lesssim\tr B^{(1)}\cdot\tr(\Delta^{(k)}B^{(2)}\Delta^{(k)})\cdot\norm{B^{(3)}}_{\spe}.\label{eq:tensor_bound_2_key}
\end{equation}
and that
\[
\tr(\Delta^{(k)}B^{(2)}\Delta^{(k)})\leq\sqrt{\Var_{x\sim p}x^{T}B^{(2)}\Delta^{(k)}x}.
\]

Write $B^{(2)}=\sum_{i=0}^{\left\lceil \log n\right\rceil }C_{i}$
where each $C_{i}$ has eigenvalues in $(\|B^{(2)}\|_{\op}2^{i}/n,\|B^{(2)}\|_{\op}2^{i+1}/n]$
for $i\geq1$ and $C_{0}$ has eigenvalues less than $\|B^{(2)}\|_{\op}/n$.
Let $P_{i}$ be the orthogonal projection from $\Rn$ to the range
of $C_{i}$.
\begin{align*}
\Var_{x\sim p}x^{T}B^{(2)}\Delta^{(k)}x & =\sum_{i=0}^{\left\lceil \log n\right\rceil }\Var_{x\sim p}x^{T}P_{i}B^{(2)}\Delta^{(k)}x\\
 & \lesssim\sum_{i=0}^{\left\lceil \log n\right\rceil }\psi_{r_{i}}^{2}\cdot\E_{x\sim p}\|P_{i}B^{(2)}\Delta^{(k)}x\|_{2}^{2}\\
 & =\sum_{i=0}^{\left\lceil \log n\right\rceil }\psi_{r_{i}}^{2}\cdot\tr(\Delta^{(k)}C_{i}^{2}\Delta^{(k)})
\end{align*}
where $r_{i}=\rank(P_{i}B^{(2)}\Delta^{(k)}+(P_{i}B^{(2)}\Delta^{(k)})^{\top})\leq2\cdot\rank(P_{i})$.
Hence, we have that
\begin{align*}
\tr(\Delta^{(k)}B^{(2)}\Delta^{(k)})^{2} & \lesssim\sum_{i=0}^{\left\lceil \log n\right\rceil }\rank(P_{i})^{\frac{1}{k}}\cdot\|C_{i}\|_{\op}\cdot\tr(\Delta^{(k)}B^{(2)}\Delta^{(k)})\\
 & \lesssim\left(\|B^{(2)}\|_{\op}+\sum_{i=1}^{\left\lceil \log n\right\rceil }\rank(P_{i})^{\frac{1}{k}}\cdot\|C_{i}\|_{\op}\right)\cdot\tr(\Delta^{(k)}B^{(2)}\Delta^{(k)})
\end{align*}
where we used that $\rank(P_{0})^{\frac{1}{k}}\|C_{0}\|_{\op}\leq n\cdot\|B^{(2)}\|_{\op}/n\leq\|B^{(2)}\|_{\op}$
at the end. Finally, we note that 
\[
\sum_{i=1}^{\left\lceil \log n\right\rceil }\rank(P_{i})^{\frac{1}{k}}\|C_{i}\|_{\op}\lesssim\left(\sum_{i=1}^{\left\lceil \log n\right\rceil }\rank(P_{i})\|C_{i}\|_{\op}^{k}\right)^{\frac{1}{k}}(\log n)^{1-\frac{1}{k}}\leq\left(\tr(B^{(2)})^{k}\right)^{\frac{1}{k}}(\log n)^{1-\frac{1}{k}}
\]
and hence
\[
\tr(\Delta^{(k)}B^{(2)}\Delta^{(k)})^{2}\lesssim(\log n)^{1-\frac{1}{k}}\left(\tr(B^{(2)})^{k}\right)^{\frac{1}{k}}\tr(\Delta^{(k)}B^{(2)}\Delta^{(k)}).
\]
Hence, we have that
\[
\tr(\Delta^{(k)}B^{(2)}\Delta^{(k)})\leq(\log n)^{1-\frac{1}{k}}\cdot\left(\tr(B^{(2)})^{k}\right)^{1/k}.
\]
Putting this into (\ref{eq:tensor_bound_2_key}) gives the result.
\end{proof}
Using Lemma \ref{lem:tensor_bound2} instead of Lemma \ref{lem:tensor_bound}
in Section \ref{subsec:Bounding-the-spectral} gives the following
result:
\begin{lem}
\label{lem:norm_At2}Assume that for some $k\geq2$, $\psi_{n}\lesssim n^{1/2k}$
for all $n$. There is some universal constant $c\geq0$ such that
for any 
\[
0\leq T\leq\frac{1}{c\cdot k\cdot(\log n)^{1-\frac{1}{k}}\cdot n^{1/k}},
\]
we have that
\[
\P(\max_{t\in[0,T]}\norm{A_{t}}_{\spe}\geq2)\leq2\exp(-\frac{1}{cT}).
\]
\end{lem}

\bibliographystyle{plain}
\bibliography{acg}

\end{document}